\newtheorem{definition}{Definition}[section]
\newtheorem{proposition}[definition]{Proposition}
\newtheorem{corollary}[definition]{Corollary}
\newtheorem{remark}[definition]{Remark}
\newtheorem{theorem}[definition]{Theorem}
\newtheorem{example}[definition]{Example}
\newcommand{\nat}{\mbox{$\;\natural \;$}}
\def\rawo\lonra{\longrightarrow}
\def\ot{\otimes}
\newcommand{\selabel}[1]{\label{se:#1}}
\def\hot{\hat{\otimes }}
\def\le{\langle}
\def\ri{\rangle}
\newenvironment{proof}{{\it Proof.}}{\hfill $ \square $ \vskip 4mm}
\begin{document}
\title{Hom-L-R-smash products, Hom-diagonal crossed products and the Drinfeld double 
of a Hom-Hopf algebra}
\author{Abdenacer Makhlouf\\
Universit\'{e} de Haute Alsace, \\
Laboratoire de Math\'{e}matiques, Informatique et Applications, \\
4, Rue des Fr\`{e}res Lumi\`{e}re, F-68093 Mulhouse, France\\
e-mail: Abdenacer.Makhlouf@uha.fr
\and Florin Panaite\thanks {Work supported by a grant of the Romanian National 
Authority for Scientific Research, CNCS-UEFISCDI, 
project number PN-II-ID-PCE-2011-3-0635,  
contract nr. 253/5.10.2011.}\\
Institute of Mathematics of the
Romanian Academy\\
PO-Box 1-764, RO-014700 Bucharest, Romania\\
 e-mail: Florin.Panaite@imar.ro}
\date{}
\maketitle

\begin{abstract}
We  introduce the Hom-analogue of the L-R-smash product and use it to define 
the Hom-analogue of the diagonal crossed product. When $H$ is a finite dimensional Hom-Hopf algebra 
with bijective antipode and bijective structure map, we define the Drinfeld double of $H$; its algebra 
structure is a Hom-diagonal crossed product and it has all expected properties, namely it is quasitriangular and 
modules over it coincide with left-right Yetter-Drinfeld modules over $H$. 
\end{abstract}
\section*{Introduction}
${\;\;\;}$
Hom-type algebras appeared first in physical contexts, in connection with twisted, discretized or deformed 
derivatives and corresponding generalizations, discretizations and deformations of vector fields and differential 
calculus (see \cite{Alvarez,AizawaSaito,ChaiElinPop,ChaiKuLukPopPresn,ChaiIsKuLuk,ChaiPopPres,CurtrZachos1,
DamKu,DaskaloyannisGendefVir,Hu,Kassel1,LiuKeQin}). These papers dealt mainly with $q$-deformations of  Heisenberg algebras (oscillator algebras),  the Virasoro algebra and quantum conformal algebras, applied in Physics within 
string theory, vertex operator models, quantum
scattering, lattice models and other contexts.

In \cite{HLS,LS1} the authors  showed that a new quasi-deformation scheme leads from Lie algebras to a broader class of quasi-Lie algebras and subclasses of quasi-Hom-Lie algebras and Hom-Lie algebras.  The study of the  class of Hom-Lie algebras, generalizing usual Lie algebras and where the Jacobi identity  is twisted by a  linear map, has  become an active area  of research.  The corresponding associative algebras, called Hom-associative algebras, where introduced in  \cite{ms1} and it was shown that a commutator of a Hom-associative algebra gives rise to a Hom-Lie algebra. For further results  see \cite{AEM,fgs,Gohr,Mak:Almeria,ms2,yau1,yau3,yauhomyb1}. The coalgebra counterpart and the related  notions of 
Hom-bialgebra and Hom-Hopf algebra were introduced in \cite{ms3,ms4}  and 
some of their properties, extending properties of
bialgebras and Hopf algebras, were described. The original definitions of Hom-bialgebra and Hom-Hopf algebra involve two different linear maps $\alpha$ and $\beta$, with $\alpha $ twisting the associativity condition and $\beta $ the coassociativity condition. Afterwards, two directions of study were developed, one considering the class such that 
$\beta=\alpha$, which are still called Hom-bialgebras and Hom-Hopf algebras (cf. \cite{Elhamdadi-Makhlouf,mp1,mp2,yau2,homquantum1,homquantum2,homquantum3}) and another one, initiated in \cite{stef}, 
where the map $\alpha$ is assumed to be invertible and $\beta=\alpha^{-1}$ (these are called monoidal 
Hom-bialgebras and monoidal Hom-Hopf algebras). Yetter-Drinfeld modules, integrals, 
the Drinfeld double and Radford's biproduct  have been studied for monoidal Hom-bialgebras in \cite{CZ,CWZ,LB}. 
Yetter-Drinfeld modules over  Hom-bialgebras were studied  in \cite{mp1} and we will introduce the  
Drinfeld double in this paper. Since Hom-bialgebras and monoidal Hom-bialgebras are different concepts, it turns out 
that our definitions, formulae and results are also different from the ones in \cite{CZ, LB}.

One of the main tools to construct
examples of  Hom-type algebras is the so-called ''twisting principle'' which 
 was introduced by D. Yau for Hom-associative algebras and since then 
extended to various Hom-type algebras. It allows to construct a Hom-type algebra starting from a 
classical-type algebra and an algebra homomorphism.

The twisted tensor product $A\ot _RB$ of two associative algebras $A$ and $B$ is a certain associative algebra 
structure on the vector space $A\ot B$, defined in terms of a so-called twisting map $R:B\ot A\rightarrow 
A\ot B$, having the property that it coincides with the usual tensor product algebra $A\ot B$ if $R$ is the usual flip 
map. This construction was introduced in \cite{Cap,VanDaele} and it may be regarded as a 
representative for the Cartesian product of noncommutative spaces, see \cite{jlpvo, lpvo} for more on this 
subject. In \cite{mp2} we generalized this construction to Hom-associative algebras: if $R$ is a linear map 
$R:B\ot A\rightarrow A\ot B$ between two Hom-associative algebras $A$ and $B$ satisfying some conditions 
(such an $R$ is called Hom-twisting map), we can construct the so-called Hom-twisted tensor product 
$A\ot _RB$, which is a Hom-associative algebra. 

The L-R-smash product over a cocommutative Hopf algebra was introduced 
and studied in a series of papers  
\cite{b1, b2, b3, b4}, inspired by the theory of deformation quantization. This construction was 
substantially generalized in \cite{panvan}, as follows: if $H$ is a bialgebra or quasi-bialgebra and $D$ is an 
$H$-bimodule algebra, the L-R-smash product is a certain associative algebra structure on $D\ot H$, denoted 
by $D\nat H$. A common generalization of twisted tensor products of algebras and L-R-smash products 
over bialgebras was introduced in \cite{cp}, under the name L-R-twisted tensor product of algebras. 

The diagonal crossed product (\cite{bpvo, hn1}) is a construction that associates to a Hopf or quasi-Hopf 
algebra $H$ with bijective antipode and to an $H$-bimodule algebra $D$ a certain associative algebra structure 
on $D\ot H$, denoted by $D\bowtie H$. It was proved in \cite{panvan} that we actually have 
an algebra isomorphism $D\nat H\simeq D\bowtie H$. The importance of the diagonal crossed product 
stems from the fact that, if $H$ is finite dimensional and $D=H^*$, then $H^*\bowtie H$ is the algebra 
structure of the Drinfeld double of $H$. 

The ultimate aim of this paper is to construct the Drinfeld double of a finite dimensional Hom-Hopf algebra $H$ 
with bijective antipode and bijective structure map. We can expect from the beginning that its algebra structure 
has to be a Hom-analogue of a diagonal crossed product between $H^*$ and $H$, but it is not clear at all 
a priori how to define this Hom-analogue. So, we proceed as follows. We define first the Hom-analogue of the 
L-R-twisted tensor product of algebras, which is a natural generalization of the Hom-twisted tensor product. 
It is defined as follows: if $A$ and $B$ are two Hom-associative algebras and $R:B\ot A\rightarrow A\ot B$ and 
$Q:A\ot B\rightarrow A\ot B$ are linear maps satisfying some conditions, the 
Hom-L-R-twisted tensor product $A\;_Q\ot _RB$ is a certain Hom-associative algebra structure on $A\ot B$. 
The key result is Proposition \ref{biject}, saying that if $Q$ is bijective then the map 
$P:=Q^{-1}\circ R:B\ot A\rightarrow A\ot B$ is a Hom-twisting map (and we have an algebra 
isomorphism $A\;_Q\ot _RB\simeq A\ot _PB$). Now if $H$ is a Hom-bialgebra and $D$ is an $H$-bimodule 
Hom-algebra, we can define in a natural way a Hom-L-R-twisted tensor product $D\;_Q\ot _RH$, which is 
denoted by $D\nat H$ and called the Hom-L-R-smash product. It turns out that, under some extra 
hypotheses (among them, the existence of a bijective antipode on $H$), the map $Q$ is bijective, so we have the 
Hom-twisted tensor product $D\ot _PH$, where $P=Q^{-1}\circ R$; this will be the Hom-diagonal crossed 
product $D\bowtie H$ we are looking for. Moreover, if $H$ is a finite dimensional Hom-Hopf algebra 
with bijective antipode and bijective structure map, we can build such a Hom-diagonal crossed product 
$H^*\bowtie H$, and this will be the algebra structure of the Drinfeld double $D(H)$. 

To find the rest of the structure of $D(H)$, we define left-right Yetter-Drinfeld modules over $H$, note 
that they form a braided monoidal category (we analyzed this in detail for left-left Yetter-Drinfeld modules 
in \cite{mp1}), prove that the category of modules over $D(H)$ is isomorphic to the category of 
left-right Yetter-Drinfeld modules and then transfer all the structure from $_H{\mathcal YD}^H$ to 
$D(H)$. It turns out that $D(H)$ is a quasitriangular Hom-Hopf algebra, as expected. 

\section{Preliminaries}\selabel{1}
${\;\;\;}$
We work over a base field $k$. All algebras, linear spaces
etc... will be over $k$; unadorned $\ot $ means $\ot_k$. For a comultiplication 
$\Delta :C\rightarrow C\ot C$ on a vector space $C$ we use a 
Sweedler-type notation $\Delta (c)=c_1\ot c_2$, for $c\in C$. Unless 
otherwise specified, the (co)algebras ((co)associative or not) that will appear 
in what follows are {\em not} supposed to be (co)unital, and a multiplication 
$\mu :V\ot V\rightarrow V$ on a linear space $V$ is denoted by juxtaposition: 
$\mu (v\ot v')=vv'$. 

We recall some concepts and results, fixing the terminology 
to be used throughout the paper. For Hom-structures, we use terminology as in our previous papers
\cite{mp1}, \cite{mp2}.
\begin{proposition} (\cite{cp}) \label{LRtwpr}
Let $A$ and $B$ be two associative algebras and 
$R:B\otimes A\rightarrow A\otimes B$, 
$Q:A\otimes B\rightarrow A\otimes B$ two linear maps, for which 
we use a Sweedler-type notation $R(b\ot a)=a_R\ot b_R=a_r\ot b_r$
and $Q(a\otimes b)=a_Q\otimes b_Q=a_q\ot b_q$, for all $a\in A$, $b\in B$, 
satisfying the following conditions, for all $a, a'\in A$ and $b, b'\in B$:
\begin{eqnarray}
&&(aa')_R\otimes b_R=a_Ra'_r\otimes (b_R)_r, \label{tw4} \\
&&a_R\otimes (bb')_R=(a_R)_r\otimes b_rb'_R, \label{tw5} \\
&&(aa')_Q\otimes b_Q=a_qa'_Q\otimes (b_Q)_q, \label{tw4'} \\
&&a_Q\otimes (bb')_Q=(a_Q)_q\otimes b_Qb'_q, \label{tw5'} \\
&&b_R\otimes (a_R)_Q\otimes b'_Q=b_R\otimes (a_Q)_R\otimes b'_Q, 
\label{comb1} \\
&&a_R\otimes (b_R)_Q\otimes a'_Q=a_R\otimes (b_Q)_R\otimes a'_Q.
\label{comb2}
\end{eqnarray}
If we define on $A\otimes B$ a multiplication by 
$(a\otimes b)(a'\otimes b')=a_Qa'_R\otimes b_Rb'_Q$,  
then this multiplication is associative. 
This algebra structure will be denoted by 
$A\; _Q\otimes _RB$ and will be called the 
{\em  L-R-twisted tensor product} of 
$A$ and $B$ afforded by the maps $R$ and $Q$. In the particular case $Q=id_{A\ot B}$, the 
L-R-twisted tensor product $A\; _Q\otimes _RB$ reduces to the twisted tensor product $A\ot _RB$ 
introduced in \cite{Cap}, \cite{VanDaele}, whose multiplication is defined by 
$(a\ot b)(a'\ot b')=aa'_R\ot b_Rb'$.
\end{proposition}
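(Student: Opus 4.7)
The plan is to verify associativity directly by expanding both triple products $((a\ot b)(a'\ot b'))(a''\ot b'')$ and $(a\ot b)((a'\ot b')(a''\ot b''))$ via two applications of the product rule, and then matching the two fully-expanded forms term by term using the six axioms.

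After a first application of the product the LHS becomes $(a_Qa'_R\ot b_Rb'_Q)(a''\ot b'')$, and a second application (with fresh Sweedler-type copies $Q',R'$ of the maps) gives $(a_Qa'_R)_{Q'}\,a''_{R'}\ot (b_Rb'_Q)_{R'}\,b''_{Q'}$. I would then apply (\ref{tw4'}) to split the $Q'$ across the product $a_Qa'_R$ in the first tensor slot, and (\ref{tw5}) to split the $R'$ across $b_Rb'_Q$ in the second slot, arriving at
\[
LHS\;=\;(a_Q)_{q'}\,(a'_R)_{Q'}\,(a''_{R'})_{r'}\ot (b_R)_{r'}\,(b'_Q)_{R'}\,(b''_{Q'})_{q'}.
\]
A symmetric calculation on the RHS --- a first product gives $a'_Qa''_R\ot b'_Rb''_Q$, a second gives $a_{Q''}(a'_Qa''_R)_{R''}\ot b_{R''}(b'_Rb''_Q)_{Q''}$, and then (\ref{tw4}) and (\ref{tw5'}) split the two products --- should produce
\[
RHS\;=\;(a_{Q''})_{q''}\,(a'_Q)_{R''}\,(a''_R)_{r''}\ot (b_{R''})_{r''}\,(b'_R)_{Q''}\,(b''_Q)_{q''}.
\]

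At this stage I expect the six pairings of $A$-factors with $B$-factors (encoded by which indices share a dummy letter) to already match across the two expressions, with the only discrepancies being the internal order of the $R$- and $Q$-decorations on two factors: the LHS has $(a'_R)_{Q'}$ while the RHS has $(a'_Q)_{R''}$, and the LHS has $(b'_Q)_{R'}$ while the RHS has $(b'_R)_{Q''}$. These are precisely the local configurations governed by the compatibility axioms: (\ref{comb1}), applied in situ with the $r'$- and $q'$-decorations on the neighbouring $b$ and $b''$ treated as spectators, rewrites $(a'_R)_{Q'}$ as $(a'_{Q'})_R$, and (\ref{comb2}) analogously rewrites $(b'_Q)_{R'}$ as $(b'_{R'})_Q$. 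After these two swaps, LHS and RHS coincide up to a pure renaming of the six dummy letters.

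The hard part will be purely bookkeeping, not conceptual: with six Sweedler-type indices in play simultaneously, it is easy to misassign pairings. The safest route is to first catalogue the multiset of $A$-$B$ pairings in each expanded form and verify that they agree, and only then invoke (\ref{comb1}) and (\ref{comb2}) on the two factors where the order of operations still differs. The four mixing axioms (\ref{tw4})--(\ref{tw5'}) are used exclusively in the expansion phase and never during the reconciliation.
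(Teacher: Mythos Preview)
Your proposal is correct and is essentially the same direct verification the paper uses. The paper itself merely cites this proposition from \cite{cp}, but it proves the Hom-analogue (Proposition \ref{homlrttp}) by the same computation organized one-sidedly: expand only the right-hand triple product with (\ref{lrhom3}) and (\ref{lrhom6}), perform the two swaps via (\ref{lrhom7})--(\ref{lrhom8}), and then \emph{recombine} with (\ref{lrhom4}) and (\ref{lrhom5}) to reach the left-hand side --- your symmetric ``expand both, then swap and match'' variant uses the identical six axiom applications, just distributed differently.
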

\begin{example}
Let $H$ be a bialgebra and $D$ an $H$-bimodule algebra in the usual sense, with actions 
$H\ot D\rightarrow D$, $h\ot d\mapsto h\cdot d$ and $D\ot H\rightarrow D$, $d\ot h\mapsto d\cdot h$. 
Define the linear maps 
\begin{eqnarray*}
&&R:H\ot D\rightarrow D\ot H, \;\;\;R(h\ot d)=h_1\cdot d\ot h_2, \\
&&Q:D\ot H\rightarrow D\ot H, \;\;\;Q(d\ot h)=d\cdot h_2\ot h_1.
\end{eqnarray*}
Then we have an L-R-twisted tensor product $D\;_Q\ot _RH$, which is denoted by $D\nat H$ and is called 
the {\em L-R-smash product} of $D$ and $H$ (cf. \cite{panvan}). If we denote $d\ot h:=d\nat h$, 
for $d\in D$, $h\in H$, the multiplication of $D\nat H$ is given by 
\begin{eqnarray*}
&&(d\nat h)(d'\nat h')=(d\cdot h'_2)(h_1\cdot d')\nat h_2h'_1.
\end{eqnarray*}
If $H$ is moreover a Hopf algebra with bijective antipode, we can define as well the so-called 
{\em diagonal crossed product} $D\bowtie H$ (cf. \cite{hn1}, \cite{bpvo}), an associative algebra 
structure on $D\ot H$ 
whose multiplication is defined (we denote $d\ot h:=d\bowtie h$) by 
\begin{eqnarray*}
&&(d\bowtie h)(d'\bowtie h')=d(h_1\cdot d'\cdot S^{-1}(h_3))\bowtie h_2h'.
\end{eqnarray*}
If the action of $H$ on $D$ is unital, we have $D\nat H\simeq D\bowtie H$, cf. \cite{panvan}. 
\end{example}
\begin{definition}
(i) A {\em Hom-associative algebra} is a triple $(A, \mu , \alpha )$, in which $A$ is a linear space, 
$\alpha :A\rightarrow A$  and $\mu :A\ot A\rightarrow A$ are linear maps,  
with notation $\mu (a\ot a')=aa'$, such that 
\begin{eqnarray*}
&&\alpha (aa')=\alpha (a)\alpha (a'), \;\;\;\;\;(multiplicativity)\\
&&\alpha (a)(a'a'')=(aa')\alpha (a''), \;\;\;\;\;(Hom-associativity)
\end{eqnarray*}
for all $a, a', a''\in A$. 
We call $\alpha $ the {\em structure map} of $A$. 

A morphism $f:(A, \mu _A , \alpha _A)\rightarrow (B, \mu _B , \alpha _B)$ of Hom-associative algebras 
is a linear map $f:A\rightarrow B$ such that $\alpha _B\circ f=f\circ \alpha _A$ and 
$f\circ \mu_A=\mu _B\circ (f\ot f)$. \\
(ii) A {\em Hom-coassociative coalgebra} is a triple $(C, \Delta, \alpha )$, in which $C$ is a linear 
space, $\alpha :C\rightarrow C$ and $\Delta :C\rightarrow C\ot C$ are linear maps 
($\alpha $ is called the {\em structure map} of $C$) such that  
\begin{eqnarray*}
&&(\alpha \ot \alpha )\circ \Delta =
\Delta \circ \alpha , \;\;\;\;\;(comultiplicativity)\\  
&&(\Delta \ot \alpha )\circ \Delta =
(\alpha \ot \Delta )\circ \Delta . \;\;\;\;\;(Hom-coassociativity)
\end{eqnarray*}

A morphism $g:(C, \Delta _C , \alpha _C)\rightarrow (D, \Delta _D , \alpha _D)$ of Hom-coassociative 
coalgebras  
is a linear map $g:C\rightarrow D$ such that $\alpha _D\circ g=g\circ \alpha _C$ and 
$(g\ot g)\circ \Delta _C=\Delta _D\circ g$.
\end{definition}
\begin{remark}
Assume that $(A, \mu _A , \alpha _A)$ and $(B, \mu _B, \alpha _B)$ are two Hom-associative algebras; then 
$(A\ot B, \mu _{A\ot B}, \alpha _A\ot \alpha _B)$ is a Hom-associative algebra (called the tensor 
product of $A$ and $B$), where $\mu _{A\ot B}$ is the usual multiplication: 
$(a\ot b)(a'\ot b')=aa'\ot bb'$. 
\end{remark}
\begin{definition} 
Let $(A, \mu _A , \alpha _A)$ be a Hom-associative algebra, $M$ a linear space and $\alpha _M:M
\rightarrow M$ a linear map. \\
(i) (\cite{yau1}, \cite{homquantum3})  A {\em left $A$-module} structure on $(M, \alpha _M)$ consists of a 
linear map 
$A\ot M\rightarrow M$, $a\ot m\mapsto a\cdot m$, satisfying the conditions (for all $a, a'\in A$ and $m\in M$)
\begin{eqnarray}
&&\alpha _M(a\cdot m)=\alpha _A(a)\cdot \alpha _M(m), \label{hommod1}\\
&&\alpha _A(a)\cdot (a'\cdot m)=(aa')\cdot \alpha _M(m). \label{hommod2}
\end{eqnarray} 
(ii) (\cite{mp2}) A {\em right $A$-module} structure on $(M, \alpha _M)$ consists of a linear map 
$M\ot A\rightarrow M$, $m\ot a\mapsto m\cdot a$, satisfying the conditions (for all $a, a'\in A$ and $m\in M$) 
\begin{eqnarray}
&&\alpha _M(m\cdot a)=\alpha _M(m)\cdot \alpha _A(a), \label{righthommod1}\\
&&(m\cdot a)\cdot \alpha _A(a')=\alpha _M(m)\cdot (aa'). \label{righthommod2}
\end{eqnarray} 
If $(M, \alpha _M)$, $(N, \alpha _N)$ are left (respectively right)  $A$-modules 
($A$-actions denoted by $\cdot$),  
a morphism of left (respectively right) $A$-modules $f:M\rightarrow N$ is a linear map with
$\alpha _N\circ f=f\circ \alpha _M$  and $f(a\cdot m)=a\cdot f(m)$ (respectively  $f(m\cdot a)=f(m)\cdot a$), 
$\forall \;a\in A, \;m\in M$. 
\end{definition}

\begin{definition} (\cite{ms3}, \cite{ms4})
A {\em Hom-bialgebra} is a quadruple $(H, \mu , \Delta, \alpha )$, in which $(H, \mu , \alpha )$ is 
a Hom-associative algebra, $(H, \Delta , \alpha )$ is a Hom-coassociative coalgebra  
and moreover $\Delta $ is a morphism of Hom-associative algebras.  
\end{definition}

Thus, a Hom-bialgebra is a Hom-associative algebra $(H, \mu , \alpha )$ endowed with a 
linear map $\Delta :H\rightarrow H\ot H$, with notation $\Delta (h)=h_1\ot h_2$, such that, for all $h, h'\in H$, 
we have: 
\begin{eqnarray}
&&\Delta (h_1)\ot \alpha (h_2)=\alpha (h_1)\ot \Delta (h_2),  \label{hombia1}\\
&&\Delta (hh')=h_1h'_1\ot h_2h'_2, \label{hombia2}\\
&&\Delta (\alpha (h))=\alpha (h_1)\ot \alpha (h_2). \label{hombia3}
\end{eqnarray}
\begin{proposition} (\cite{ms4}, \cite{yau3}) \label{yautwisting}
(i) Let $(A, \mu )$ be an associative algebra and $\alpha :A\rightarrow A$ an algebra endomorphism. Define 
a new multiplication $\mu _{\alpha }:=\alpha \circ \mu :A\ot A\rightarrow A$. Then 
$(A, \mu _{\alpha }, \alpha )$ is a Hom-associative algebra, denoted by $A_{\alpha }$. \\
(ii) Let $(C, \Delta )$ be a coassociative coalgebra and $\alpha :C\rightarrow C$ a 
coalgebra endomorphism. Define 
a new comultiplication $\Delta _{\alpha }:=\Delta \circ \alpha :C\rightarrow C\ot C$. Then 
$(C, \Delta _{\alpha }, \alpha )$ is a Hom-coassociative coalgebra, denoted by $C_{\alpha }$. \\
(iii) Let $(H, \mu , \Delta )$ be a bialgebra and $\alpha :H\rightarrow H$ a bialgebra endomorphism. 
If we define $\mu _{\alpha }$ and $\Delta _{\alpha }$ as in (i) and (ii), then 
$H_{\alpha }=(H, \mu _{\alpha }, \Delta _{\alpha }, \alpha )$ is a Hom-bialgebra. 
\end{proposition}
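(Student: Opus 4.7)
The plan is to verify each of the three claims by direct computation from the definitions, using the hypothesis that $\alpha$ respects the relevant structures.

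For part (i), I first check multiplicativity of $\alpha$ with respect to $\mu_{\alpha}$: for $a,a'\in A$, $\alpha(\mu_{\alpha}(a\ot a'))=\alpha(\alpha(aa'))=\alpha(\alpha(a)\alpha(a'))=\mu_{\alpha}(\alpha(a)\ot \alpha(a'))$, where the middle equality uses that $\alpha$ is an algebra endomorphism. For Hom-associativity, I compute
\[
\mu_{\alpha}(\alpha(a)\ot \mu_{\alpha}(a'\ot a''))=\alpha(\alpha(a)\cdot\alpha(a'a''))=\alpha(\alpha(a(a'a''))),
\]
and using associativity of $\mu$ this equals $\alpha(\alpha((aa')a''))=\alpha(\alpha(aa')\cdot\alpha(a''))=\mu_{\alpha}(\mu_{\alpha}(a\ot a')\ot \alpha(a''))$. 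So $(A,\mu_{\alpha},\alpha)$ is Hom-associative. Part (ii) is strictly dual: comultiplicativity of $\alpha$ with respect to $\Delta_{\alpha}$ follows from $\alpha$ being a coalgebra endomorphism, and Hom-coassociativity is obtained by applying $\alpha\ot \alpha\ot \alpha$ to both sides of the coassociativity identity $(\Delta\ot id)\circ\Delta=(id\ot \Delta)\circ\Delta$ composed with $\alpha$ in the appropriate slots.

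For part (iii), the Hom-associativity of $\mu_{\alpha}$ and Hom-coassociativity of $\Delta_{\alpha}$ are given by (i) and (ii), so the only remaining task is to check the three bialgebra compatibilities (\ref{hombia1})--(\ref{hombia3}) for the pair $(\mu_{\alpha},\Delta_{\alpha})$. The condition (\ref{hombia1}) is an immediate consequence of Hom-coassociativity. For (\ref{hombia3}), using that $\alpha$ is a coalgebra map, $\Delta_{\alpha}(\alpha(h))=\Delta(\alpha^{2}(h))=(\alpha\ot \alpha)\Delta(\alpha(h))=\alpha(\alpha(h)_{1})\ot \alpha(\alpha(h)_{2})$, which is precisely the required identity when $\Delta_{\alpha}$ is written in Sweedler notation.

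The main calculation is (\ref{hombia2}), where care is needed because the multiplication on $H_{\alpha}\ot H_{\alpha}$ is the tensor product of $\mu_{\alpha}$ with itself, not of $\mu$ with itself. On the left-hand side, $\Delta_{\alpha}(\mu_{\alpha}(h\ot h'))=\Delta(\alpha^{2}(hh'))=(\alpha^{2}\ot \alpha^{2})(h_{1}h'_{1}\ot h_{2}h'_{2})$ after applying $\Delta\circ\alpha=(\alpha\ot \alpha)\circ\Delta$ twice and using that $\Delta$ is multiplicative. On the right-hand side, writing $\Delta_{\alpha}(h)=\alpha(h_{1})\ot \alpha(h_{2})$ and likewise for $h'$, and multiplying in $H_{\alpha}\ot H_{\alpha}$ via $\mu_{\alpha}\ot \mu_{\alpha}$, I obtain $\alpha(\alpha(h_{1})\alpha(h'_{1}))\ot \alpha(\alpha(h_{2})\alpha(h'_{2}))=\alpha^{2}(h_{1}h'_{1})\ot \alpha^{2}(h_{2}h'_{2})$, matching the left-hand side. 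The only place where one can slip is precisely in distinguishing $\mu_{\alpha}\ot\mu_{\alpha}$ from $\mu\ot\mu$ in the tensor product, so I would flag that as the single point deserving attention; everything else is bookkeeping.
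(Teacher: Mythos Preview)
Your direct verification is correct. The paper does not supply its own proof of this proposition: it is stated in the preliminaries with citations to \cite{ms4} and \cite{yau3} as a known result (the ``twisting principle''), so there is nothing to compare against beyond noting that your argument is the standard one.
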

\begin{proposition} (\cite{homquantum3}) \label{tensprodmod}
Let $(H, \mu _H, \Delta _H, \alpha _H)$ be a Hom-bialgebra. If $(M, \alpha _M)$ and 
$(N, \alpha _N)$ are left $H$-modules, then $(M\ot N, \alpha _M\ot \alpha _N)$ is also a 
left $H$-module, with $H$-action defined by $H\ot (M\ot N)\rightarrow M\ot N$, $h\ot (m\ot n)\mapsto 
h\cdot (m\ot n):=h_1\cdot m\ot h_2\cdot n$. 
\end{proposition}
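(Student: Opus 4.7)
The plan is to verify directly the two left-module axioms (\ref{hommod1}) and (\ref{hommod2}) for the proposed action $h\cdot(m\ot n):=h_1\cdot m\ot h_2\cdot n$ on $(M\ot N,\alpha_M\ot\alpha_N)$, using the Hom-bialgebra compatibilities (\ref{hombia1})--(\ref{hombia3}) together with the hypothesis that $M$ and $N$ are individually left $H$-modules in the sense of (\ref{hommod1})--(\ref{hommod2}).

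For compatibility with the structure map, that is (\ref{hommod1}) for $M\ot N$, I would apply $\alpha_M\ot\alpha_N$ to $h_1\cdot m\ot h_2\cdot n$, pull $\alpha_H$ across each action using (\ref{hommod1}) for $M$ and for $N$ separately, and then recognize the resulting $\alpha_H(h_1)\ot\alpha_H(h_2)$ as $\Delta(\alpha_H(h))$ by the comultiplicativity axiom (\ref{hombia3}). The outcome is precisely $\alpha_H(h)\cdot(\alpha_M(m)\ot\alpha_N(n))$, which is what we need.

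For Hom-associativity, that is (\ref{hommod2}) for $M\ot N$, I would expand $\alpha_H(h)\cdot(h'\cdot(m\ot n))$ as $\alpha_H(h_1)\cdot(h'_1\cdot m)\ot\alpha_H(h_2)\cdot(h'_2\cdot n)$, where I first rewrite $\Delta(\alpha_H(h))=\alpha_H(h_1)\ot\alpha_H(h_2)$ using (\ref{hombia3}). Then I would apply (\ref{hommod2}) in each tensor slot to turn $\alpha_H(h_i)\cdot(h'_i\cdot\bullet)$ into $(h_ih'_i)\cdot\alpha(\bullet)$, and finally invoke the multiplicativity of $\Delta$ from (\ref{hombia2}) to identify $(h_1h'_1)\ot(h_2h'_2)$ with $\Delta(hh')$. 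This yields $(hh')\cdot(\alpha_M(m)\ot\alpha_N(n))$, the desired right-hand side.

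There is no real obstacle here: both verifications are single Sweedler-notation calculations, and the argument is entirely symmetric between the two tensor factors. The only point worth noting is the bookkeeping between (\ref{hombia1}) and (\ref{hombia3}); it is (\ref{hombia3}) (compatibility of $\Delta$ with $\alpha_H$) together with (\ref{hombia2}) (multiplicativity of $\Delta$) that actually do the work, while (\ref{hombia1}) (Hom-coassociativity) is not needed for this statement.
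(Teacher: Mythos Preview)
Your verification is correct and complete: the two axioms (\ref{hommod1}) and (\ref{hommod2}) for the tensor product module follow exactly as you describe, from (\ref{hombia3}) together with (\ref{hommod1}) on each factor for the first, and from (\ref{hombia3}), (\ref{hommod2}) on each factor, and (\ref{hombia2}) for the second; your remark that Hom-coassociativity (\ref{hombia1}) plays no role is also right. The paper itself does not supply a proof of this proposition --- it is simply recalled from \cite{homquantum3} as background --- so there is nothing to compare against; your direct computation is the standard one.
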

\begin{definition} (\cite{yau1}) 
Let $(H, \mu _H, \Delta _H, \alpha _H)$ be a Hom-bialgebra. A Hom-associative algebra 
$(A, \mu _A, \alpha _A)$ is called a {\em left $H$-module Hom-algebra} if $(A, \alpha _A)$ is a 
left $H$-module, 
with action denoted by $H\ot A\rightarrow A$, $h\ot a\mapsto h\cdot a$, such that the following 
condition is satisfied: 
\begin{eqnarray}
&&\alpha _H^2(h)\cdot (aa')=(h_1\cdot a)(h_2\cdot a'), \;\;\;\forall \;h\in H, \;a, a'\in A. \label{modalgcompat}
\end{eqnarray} 
\end{definition}
\begin{proposition} (\cite{yau1}) \label{deformmodalg}
Let $(H, \mu _H, \Delta _H)$ be a bialgebra and $(A, \mu _A)$ a left $H$-module algebra in the usual sense, 
with action denoted by $H\ot A\rightarrow A$, $h\ot a\mapsto h\cdot a$. Let $\alpha _H:H\rightarrow H$ 
be a bialgebra endomorphism and $\alpha _A:A\rightarrow A$ an algebra endomorphism, such that 
$\alpha _A(h\cdot a)=\alpha _H(h)\cdot \alpha _A(a)$, for all $h\in H$ and $a\in A$. If we 
consider the Hom-bialgebra 
$H_{\alpha _H}=(H, \alpha _H\circ \mu _H, \Delta _H\circ \alpha _H, \alpha _H)$ and 
the Hom-associative algebra $A_{\alpha _A}=(A, \alpha _A\circ \mu _A, \alpha _A)$, 
then $A_{\alpha _A}$ is a left $H_{\alpha _H}$-module Hom-algebra in the above sense, with action 
$H_{\alpha _H}\ot A_{\alpha _A}\rightarrow A_{\alpha _A}$, $h\ot a\mapsto h\triangleright a:=
\alpha _A(h\cdot a)=\alpha _H(h)\cdot \alpha _A(a)$. 
\end{proposition}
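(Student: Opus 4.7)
The plan is to verify, for the twisted action $h\triangleright a := \alpha_A(h\cdot a) = \alpha_H(h)\cdot \alpha_A(a)$, the three axioms that make $A_{\alpha_A}$ into a left $H_{\alpha_H}$-module Hom-algebra, namely \eqref{hommod1}, \eqref{hommod2} and \eqref{modalgcompat}. The two expressions for $h\triangleright a$ coincide by the standing compatibility hypothesis, and the proof is a routine unfolding: at each step I use whichever of the two formulas is more convenient, commute $\alpha_A$ (an algebra map) and $\alpha_H$ (a bialgebra map) past products or coproducts, and invoke the classical $H$-module algebra axioms on $(A,\mu_A)$.

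For \eqref{hommod1}, compute $\alpha_A(h\triangleright a) = \alpha_A^2(h\cdot a)$ and apply $\alpha_A$ to the hypothesis to obtain $\alpha_A^2(h\cdot a) = \alpha_A(\alpha_H(h)\cdot \alpha_A(a)) = \alpha_H(h)\triangleright \alpha_A(a)$, which is the desired identity.

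For \eqref{hommod2}, note that the multiplication of $H_{\alpha_H}$ is $\alpha_H\circ \mu_H$, so the axiom reads $\alpha_H(h)\triangleright(h'\triangleright a) = \alpha_H(hh')\triangleright \alpha_A(a)$. Unfolding the left-hand side (using the first form on the inner $\triangleright$ and the second form on the outer one) gives $\alpha_A(\alpha_H(h)\cdot(\alpha_H(h')\cdot \alpha_A(a)))$; the classical associativity of the $H$-action plus $\alpha_H(h)\alpha_H(h') = \alpha_H(hh')$ rewrites this as $\alpha_A(\alpha_H(hh')\cdot \alpha_A(a))$, which equals the right-hand side.

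Finally, for \eqref{modalgcompat}, use that $\alpha_H$ is a coalgebra map to write $\Delta_{\alpha_H}(h) = \Delta_H(\alpha_H(h)) = \alpha_H(h_1)\ot \alpha_H(h_2)$, and that $\alpha_A$ is an algebra map so $\alpha_A(aa') = \alpha_A(a)\alpha_A(a')$. The left-hand side $\alpha_H^2(h)\triangleright(a\cdot_{A_{\alpha_A}}a') = \alpha_H^2(h)\triangleright \alpha_A(aa')$ unfolds to $\alpha_A(\alpha_H^2(h)\cdot(\alpha_A(a)\alpha_A(a')))$; applying the classical module algebra axiom for $\alpha_H^2(h)$, the coalgebra property of $\alpha_H$, and the distribution of $\alpha_A$ over the resulting product produces $\alpha_A((\alpha_H^2(h_1)\cdot \alpha_A(a))(\alpha_H^2(h_2)\cdot \alpha_A(a')))$, which is exactly $(\alpha_H(h_1)\triangleright a)\cdot_{A_{\alpha_A}}(\alpha_H(h_2)\triangleright a')$, the right-hand side read in $H_{\alpha_H}$ and $A_{\alpha_A}$. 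No step is a serious obstacle; the only care required is the bookkeeping, since the twisted multiplication of $A_{\alpha_A}$ and the twisted comultiplication of $H_{\alpha_H}$ silently absorb extra copies of $\alpha_A$ and $\alpha_H$ that must be matched on both sides.
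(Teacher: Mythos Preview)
The paper does not give its own proof of this proposition; it is simply quoted from \cite{yau1}. Your verification is correct: the three axioms \eqref{hommod1}, \eqref{hommod2} and \eqref{modalgcompat} are each checked by unwinding the definition of $\triangleright$, using the compatibility hypothesis $\alpha_A(h\cdot a)=\alpha_H(h)\cdot\alpha_A(a)$, the fact that $\alpha_H$ is a bialgebra map and $\alpha_A$ an algebra map, and the classical module algebra axioms. This is exactly the routine ``twisting principle'' computation one expects, and there is nothing to add.
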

\begin{definition} (\cite{mp2})
Assume that $(H, \mu _H, \Delta _H, \alpha _H)$ is a Hom-bialgebra. A Hom-associative algebra 
$(C, \mu _C, \alpha _C)$ is called a {\em right $H$-module Hom-algebra} if $(C, \alpha _C)$ is a 
right $H$-module, 
with action denoted by $C\ot H\rightarrow C$, $c\ot h\mapsto c\cdot h$, such that the following 
condition is satisfied: 
\begin{eqnarray}
&&(cc')\cdot \alpha _H^2(h)=(c\cdot h_1)(c'\cdot h_2), \;\;\;\forall \;h\in H, \;c, c'\in C. 
\end{eqnarray} 
\end{definition}
\begin{proposition}  (\cite{mp2}) \label{rightdefmodalg}
Let $(H, \mu _H, \Delta _H)$ be a bialgebra and $(C, \mu _C)$ a right $H$-module algebra in the usual sense, 
with action denoted by $C\ot H\rightarrow C$, $c\ot h\mapsto c\cdot h$. Let $\alpha _H:H\rightarrow H$ 
be a bialgebra endomorphism and $\alpha _C:C\rightarrow C$ an algebra endomorphism, such that 
$\alpha _C(c\cdot h)=\alpha _C(c)\cdot \alpha _H(h)$, for all $h\in H$ and $c\in C$. Then 
the Hom-associative algebra $C_{\alpha _C}=(C, \alpha _C\circ \mu _C, \alpha _C)$  
becomes a right module Hom-algebra over the Hom-bialgebra 
$H_{\alpha _H}=(H, \alpha _H\circ \mu _H, \Delta _H\circ \alpha _H, \alpha _H)$, 
with action defined by  
$C_{\alpha _C}\ot H_{\alpha _H}\rightarrow C_{\alpha _C}$, $c\ot h\mapsto c\triangleleft h:=
\alpha _C(c\cdot h)=\alpha _C(c)\cdot \alpha _H(h)$. 
\end{proposition}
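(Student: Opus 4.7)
The plan is to mimic the argument behind Proposition~\ref{deformmodalg}, since Proposition~\ref{rightdefmodalg} is literally its right-handed analogue. It is a direct verification that splits into two tasks: (a) check that $(C_{\alpha_C}, \alpha_C)$ with the twisted action $\triangleleft$ is a right module over the Hom-bialgebra $H_{\alpha_H}$ in the sense of (\ref{righthommod1})--(\ref{righthommod2}); (b) check the right module Hom-algebra compatibility condition. The only tools needed are the three hypotheses at hand: $\alpha_C$ is an algebra endomorphism, $\alpha_H$ is a bialgebra endomorphism, and $\alpha_C(c\cdot h) = \alpha_C(c)\cdot \alpha_H(h)$, together with the classical right module algebra axioms enjoyed by $C$ over $H$.

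For (a), I would favor the reformulation $c\triangleleft h = \alpha_C(c)\cdot \alpha_H(h)$ throughout, because it pushes the twists outside the action and turns every axiom into a consequence of (Hom-)associativity of the original action. The first axiom is immediate from the assumed intertwining. For (\ref{righthommod2}), one expands
\[
(c\triangleleft h)\triangleleft \alpha_H(h') = \alpha_C(c\triangleleft h)\cdot \alpha_H^2(h') = (\alpha_C^2(c)\cdot \alpha_H^2(h))\cdot \alpha_H^2(h'),
\]
and then uses ordinary associativity of the classical action plus multiplicativity of $\alpha_H$ to reach $\alpha_C^2(c)\cdot \alpha_H^2(hh')$, which coincides with $\alpha_C(c)\triangleleft (hh')_{\alpha_H}$ since the multiplication in $H_{\alpha_H}$ is $\alpha_H\circ\mu_H$.

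For (b), one must verify
\[
(cc')_{\alpha_C}\triangleleft \alpha_H^2(h) = \bigl((c\triangleleft h_{1,\alpha_H})(c'\triangleleft h_{2,\alpha_H})\bigr)_{\alpha_C},
\]
where the subscripts indicate the twisted structure maps. On the left one obtains, after unravelling definitions and using $\alpha_C$ multiplicative, the expression $(\alpha_C^2(c)\alpha_C^2(c'))\cdot \alpha_H^3(h)$, which by the classical module algebra axiom and the fact that $\alpha_H$ is a coalgebra map equals $(\alpha_C^2(c)\cdot \alpha_H^3(h_1))(\alpha_C^2(c')\cdot \alpha_H^3(h_2))$. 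Expanding the right-hand side by substituting $\Delta_{H_{\alpha_H}}(h) = \alpha_H(h_1)\otimes\alpha_H(h_2)$, applying the outer $\alpha_C$ through the product of $C$, and invoking the intertwining hypothesis twice inside each factor, one lands on exactly the same expression.

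No step is substantive enough to constitute a genuine obstacle; the arithmetic bookkeeping of how many times $\alpha_C$ and $\alpha_H$ appear is the only point requiring attention, and it is kept under control by always rewriting $c\triangleleft h$ in the ``twist on the outside'' form $\alpha_C(c)\cdot \alpha_H(h)$ before pushing it through products or further twists. Once both sides are reduced to $(\alpha_C^2(c)\cdot \alpha_H^3(h_1))(\alpha_C^2(c')\cdot \alpha_H^3(h_2))$, the proof is complete.
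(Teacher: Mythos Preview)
Your verification is correct and complete: the three axioms (\ref{righthommod1}), (\ref{righthommod2}) and the module Hom-algebra compatibility all reduce, via your ``twist on the outside'' rewriting $c\triangleleft h=\alpha_C(c)\cdot\alpha_H(h)$, to the classical right module algebra identities combined with multiplicativity of $\alpha_C$ and the bialgebra-endomorphism property of $\alpha_H$, exactly as you indicate. Note, however, that the present paper does not actually supply a proof of this proposition; it is quoted from \cite{mp2} as a preliminary result, so there is no in-paper argument to compare against---your direct check is precisely the expected one and mirrors the left-handed computation behind Proposition~\ref{deformmodalg}.
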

\begin{proposition} (\cite{mp2}) \label{Def-HomTwistor}
Let $(D, \mu , \alpha )$ be a Hom-associative algebra and $T:D\otimes
D\rightarrow D\otimes D$ a linear map, with notation $T(d\ot d')=d^T\ot d'_T$, for 
$d, d'\in D$, 
satisfying the conditions
\begin{eqnarray}
&&(\alpha \ot \alpha )\circ T=T\circ (\alpha \ot \alpha ), \label{multtwistor} \\
&&T\circ (\alpha \otimes \mu )=
(\alpha \otimes \mu )\circ T_{13}\circ T_{12},
\label{homtwistor1} \\
&&T\circ (\mu \otimes \alpha )=
(\mu \otimes \alpha )\circ T_{13}\circ T_{23},
\label{homtwistor2} \\
&&T_{12}\circ T_{23}=T_{23}\circ T_{12},  \label{homtwistor3}
\end{eqnarray}
where we used a standard notation for the operators $T_{ij}$, namely $T_{12}=T\ot id_D$, 
$T_{23}=id_D\ot T$ and $T_{13}(d\ot d'\ot d'')=d^T\ot d'\ot d''_T$.
Then $D^T:=(D, \mu \circ T, \alpha )$ is also a Hom-associative algebra. 
The map $T$ is called a {\em Hom-twistor}.
\end{proposition}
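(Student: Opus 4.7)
The plan is to verify the two defining conditions of a Hom-associative algebra for the pair $(\mu\circ T,\alpha)$, working entirely in operator form so that the four axioms (\ref{multtwistor})--(\ref{homtwistor3}) can be applied directly, without dragging Sweedler-type indices around.

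For multiplicativity, I start from $\mu\circ T\circ(\alpha\otimes\alpha)$ and use (\ref{multtwistor}) to commute $T$ past $\alpha\otimes\alpha$, obtaining $\mu\circ(\alpha\otimes\alpha)\circ T$; since the original multiplication $\mu$ satisfies $\alpha\circ\mu=\mu\circ(\alpha\otimes\alpha)$, this equals $\alpha\circ\mu\circ T$, which is exactly what multiplicativity of $\alpha$ with respect to the new multiplication $\mu\circ T$ demands.

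For Hom-associativity I expand both sides of $\alpha(d)\ast(d'\ast d'')=(d\ast d')\ast\alpha(d'')$ (where $\ast$ denotes $\mu\circ T$) into compositions of operators on $D\otimes D\otimes D$. The left-hand side becomes $\mu\circ T\circ(\alpha\otimes\mu)\circ T_{23}$; applying (\ref{homtwistor1}) rewrites this as $\mu\circ(\alpha\otimes\mu)\circ T_{13}\circ T_{12}\circ T_{23}$. Hom-associativity of the original $\mu$ converts $\mu\circ(\alpha\otimes\mu)$ into $\mu\circ(\mu\otimes\alpha)$, and the commutation relation (\ref{homtwistor3}) then swaps $T_{12}\circ T_{23}$ into $T_{23}\circ T_{12}$, producing
\[
\mu\circ(\mu\otimes\alpha)\circ T_{13}\circ T_{23}\circ T_{12}.
\]
The right-hand side expands as $\mu\circ T\circ(\mu\otimes\alpha)\circ T_{12}$, and a single application of (\ref{homtwistor2}) transforms it into the same expression. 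Hence both sides agree.

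I do not expect a genuine obstacle here: the statement is essentially a clean diagrammatic identity, and each of the four axioms on $T$ is used exactly once (axiom (\ref{multtwistor}) for multiplicativity; (\ref{homtwistor1}), (\ref{homtwistor2}), (\ref{homtwistor3}) together with the original Hom-associativity of $\mu$ for the new Hom-associativity). The only mild bookkeeping issue is to keep the placement of the indices $12$, $23$, $13$ in $T_{ij}$ consistent with the side of the tensor product on which $\alpha$ or $\mu$ acts; once that is done, everything lines up on the nose.
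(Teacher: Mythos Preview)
Your argument is correct; each axiom is invoked exactly where it is needed, and the operator calculus checks out line by line. The paper itself does not supply a proof of this proposition---it is quoted from \cite{mp2}---so there is nothing to compare against beyond noting that your verification is the standard one.
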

\begin{proposition} (\cite{mp2})
Let $(A, \mu _A, \alpha _A)$ and $(B, \mu _B, \alpha _B)$ 
be two Hom-associative algebras and $R:B\ot A 
\rightarrow A\ot B$ a linear map, with Sweedler-type notation $R(b\ot a)=a_R\ot b_R=a_r\ot b_r$, for 
$a\in A$, $b\in B$. Assume that the following conditions are satisfied: 
\begin{eqnarray}
&&\alpha _A(a_R)\ot \alpha _B(b_R)=\alpha _A(a)_R\ot \alpha _B(b)_R, \label{homsweed0} \\
&&(aa')_R\ot \alpha _B(b)_R=a_Ra'_r\ot \alpha _B((b_R)_r), \label{homsweed1} \\
&&\alpha _A(a)_R\ot (bb')_R=\alpha _A((a_R)_r)\ot b_rb'_R, \label{homsweed2}
\end{eqnarray}
for all $a, a'\in A$ and $b, b'\in B$ (such a map $R$ is called a {\em Hom-twisting map}). If we define a new 
multiplication on $A\ot B$ by 
$(a\ot b)(a'\ot b')=aa'_R\ot b_Rb'$, then $A\ot B$ becomes a Hom-associative algebra with 
structure map $\alpha _A\ot \alpha _B$, denoted by $A\ot _RB$ and called the {\em Hom-twisted tensor 
product} of $A$ and $B$. 
\end{proposition}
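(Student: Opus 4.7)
The plan is to verify directly the two defining conditions of a Hom-associative algebra for the structure $(A \otimes B,\mu,\alpha_A \otimes \alpha_B)$ equipped with the proposed multiplication, namely multiplicativity of the structure map and Hom-associativity.

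For multiplicativity, a direct computation gives
\[
(\alpha_A \otimes \alpha_B)\bigl((a \otimes b)(a' \otimes b')\bigr) = \alpha_A(a)\alpha_A(a'_R) \otimes \alpha_B(b_R)\alpha_B(b'),
\]
while the product of the images is
\[
(\alpha_A(a) \otimes \alpha_B(b))(\alpha_A(a') \otimes \alpha_B(b')) = \alpha_A(a)\alpha_A(a')_R \otimes \alpha_B(b)_R\alpha_B(b').
\]
Equality follows at once from axiom \eqref{homsweed0}, after using that $\alpha_A$ and $\alpha_B$ are themselves multiplicative.

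The core of the argument is Hom-associativity. I would first expand
\[
(\alpha_A(a) \otimes \alpha_B(b))\cdot \bigl((a' \otimes b')(a'' \otimes b'')\bigr) = \alpha_A(a) (a' a''_R)_r \otimes \alpha_B(b)_r (b'_R b''),
\]
apply \eqref{homsweed1} to the paired factors $(a' a''_R)_r \otimes \alpha_B(b)_r$, and then invoke Hom-associativity of $A$ and of $B$ to reach
\[
(a\,a'_r)\,\alpha_A((a''_R)_s) \otimes ((b_r)_s\,b'_R)\,\alpha_B(b'').
\]
For the right-hand side, expanding $\bigl((a \otimes b)(a' \otimes b')\bigr)\cdot (\alpha_A(a'') \otimes \alpha_B(b''))$ gives $(a\,a'_R)\alpha_A(a'')_r \otimes (b_R b')_r\alpha_B(b'')$; applying \eqref{homsweed2} to $\alpha_A(a'')_r \otimes (b_R b')_r$ yields
\[
(a\,a'_R)\,\alpha_A((a''_r)_s) \otimes ((b_R)_s\,b'_r)\,\alpha_B(b'').
\]
Since $R$ and $r$ are dummy labels for two independent applications of the map $R$, the two resulting expressions coincide after the relabeling $R \leftrightarrow r$. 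Conceptually both encode the same triple of $R$-applications, namely one to $(b,a')$, one to $(b',a'')$, and one to the pair formed from the inner outputs, combined with the Hom-associative rebracketings in each of the two tensor slots.

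The principal obstacle is pure bookkeeping: one must allocate a fresh Sweedler label for each new instance of $R$ that is introduced (in particular the label $s$ produced when one invokes \eqref{homsweed1} or \eqref{homsweed2}), and keep scrupulous track of which argument each subscript decorates. Once this is done, the equality of the two sides becomes a literal match under renaming of the summation labels.
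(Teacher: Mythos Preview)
Your proof is correct. The paper itself does not prove this proposition here (it is quoted from \cite{mp2}), but the paper does prove its generalization, the Hom-L-R-twisted tensor product (Proposition~\ref{homlrttp}), by exactly the same direct computation: multiplicativity from the compatibility axiom, then expanding $(\alpha_A\otimes\alpha_B)(a\otimes b)\cdot[(a'\otimes b')(a''\otimes b'')]$, applying the twisting-map axioms and Hom-associativity of $A$ and $B$, and matching with the other bracketing. Your argument is precisely that proof specialized to $Q=\mathrm{id}$, down to the introduction of a third Sweedler label and the final identification under relabeling of dummy indices.
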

\begin{theorem} (\cite{mp2}) \label{homiterated}
Let $(A, \mu _A, \alpha _A)$, $(B, \mu _B, \alpha _B)$ and $(C, \mu _C, \alpha _C)$ be three  
Hom-associative algebras and $R_1:B\ot A\rightarrow A\ot B$, $R_2:C\ot B\rightarrow B\ot C$, 
$R_3:C\ot A\rightarrow A\ot C$ three Hom-twisting maps, satisfying the braid condition 
\begin{eqnarray}
&&(id_A\ot R_2)\circ (R_3\ot id_B)\circ (id_C\ot R_1)=
(R_1\ot id_C)\circ (id_B\ot R_3)\circ (R_2\ot id_A). \label{hombraid}
\end{eqnarray}
Define the maps
\begin{eqnarray*}
&&P_1:C\ot (A\ot _{R_1}B)\rightarrow (A\ot _{R_1}B)\ot C, \;\;\;P_1=(id_A\ot R_2)\circ (R_3\ot id_B), \\
&&P_2:(B\ot _{R_2}C)\ot A\rightarrow A\ot (B\ot _{R_2}C), \;\;\;P_2=(R_1\ot id_C)\circ (id_B\ot R_3).
\end{eqnarray*} 
Then $P_1$ is a Hom-twisting map between $A\ot _{R_1}B$ and $C$, $P_2$ is a 
Hom-twisting map between $A$ and $B\ot _{R_2}C$, and the Hom-associative algebras 
$(A\ot _{R_1}B)\ot _{P_1}C$ and $A\ot _{P_2}(B\ot _{R_2}C)$ coincide; this Hom-associative 
algebra will be denoted by $A\ot _{R_1}B\ot _{R_2}C$ and will be called the {\em iterated Hom-twisted tensor 
product} of $A$, $B$, $C$. 
\end{theorem}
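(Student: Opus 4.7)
The plan breaks into three tasks: verify that $P_1$ is a Hom-twisting map between $A\ot_{R_1}B$ and $C$, verify the analogous statement for $P_2$, and finally check that the two iterated products coincide as Hom-associative algebras on the underlying space $A\ot B\ot C$. The last task is purely computational and requires no new input beyond the definitions; the first two tasks are where the braid condition \eqref{hombraid} does all the work.

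For $P_1$, write $P_1(c\ot(a\ot b))=a_{R_3}\ot b_{R_2}\ot (c_{R_3})_{R_2}$. The compatibility $(\alpha_A\ot\alpha_B\ot\alpha_C)\circ P_1=P_1\circ(\alpha_C\ot\alpha_A\ot\alpha_B)$ (the analogue of \eqref{homsweed0}) is immediate from the corresponding property of $R_2$ and $R_3$. The nontrivial axioms are the two multiplicativity conditions \eqref{homsweed1} and \eqref{homsweed2} for $P_1$, where one side of each equation involves the product in $A\ot_{R_1}B$, so the twisting map $R_1$ appears together with $R_2$ and $R_3$. Concretely, when expanding $((a\ot b)(a'\ot b'))_{P_1}$ one obtains a term $(aa'_{R_1})_{R_3}\ot(b_{R_1}b')_{R_2}\ot\cdots$; applying \eqref{homsweed1} for $R_3$ and for $R_2$ produces a configuration in which an $R_1$-twisted element must be pushed past $R_3$ and an $R_2$-twisted element past $R_1$. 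This is precisely what the braid relation \eqref{hombraid} allows. The verification of \eqref{homsweed2} is symmetric. The argument for $P_2$ is entirely analogous, with the roles of $R_1$ and $R_2$ switched.

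Once $P_1$ and $P_2$ are known to be Hom-twisting maps, one just compares the two multiplications. The product in $(A\ot_{R_1}B)\ot_{P_1}C$ on $(a\ot b)\ot c$ and $(a'\ot b')\ot c'$ unfolds as $(a\ot b)(a'_{R_3}\ot b'_{R_2})\ot(c_{R_3})_{R_2}c'=a(a'_{R_3})_{R_1}\ot b_{R_1}b'_{R_2}\ot(c_{R_3})_{R_2}c'$. The product in $A\ot_{P_2}(B\ot_{R_2}C)$ on $a\ot(b\ot c)$ and $a'\ot(b'\ot c')$ unfolds as $a(a'_{R_3})_{R_1}\ot(b_{R_1}\ot c_{R_3})(b'\ot c')=a(a'_{R_3})_{R_1}\ot b_{R_1}b'_{R_2}\ot(c_{R_3})_{R_2}c'$. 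These expressions agree term by term under the canonical identification $(a\ot b)\ot c\leftrightarrow a\ot(b\ot c)$, and the structure maps on both sides are $\alpha_A\ot\alpha_B\ot\alpha_C$. So the two Hom-associative algebras coincide.

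The main obstacle is the pair of multiplicativity axioms for $P_1$ and $P_2$: one has to keep careful track of several copies of each twisting map (using distinct Sweedler-style indices) and apply \eqref{hombraid} in the middle of a bookkeeping chain that also uses \eqref{homsweed1}--\eqref{homsweed2} for each of $R_1$, $R_2$, $R_3$. Everything else is essentially a rearrangement of tensor factors, and the final identification of the two iterated products is immediate once the twisting-map axioms are in place.
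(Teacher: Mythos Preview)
The present paper does not prove this theorem: it is recalled in the Preliminaries from \cite{mp2}, so there is no proof here to compare against. Your sketch is the standard argument and is essentially correct, matching what one finds in \cite{mp2}.

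One small imprecision: you say that for $P_1$ ``one side of each equation involves the product in $A\ot_{R_1}B$''. In fact only \eqref{homsweed1} for $P_1$ involves that product (and hence $R_1$); condition \eqref{homsweed2} for $P_1$ involves only the product in $C$ and follows directly from \eqref{homsweed2} for $R_3$ and $R_2$ without appealing to the braid relation. Symmetrically, for $P_2$ it is \eqref{homsweed2} that needs the braid relation while \eqref{homsweed1} does not. This does not affect the validity of your argument, just the bookkeeping of where \eqref{hombraid} is actually invoked. Your identification of the two iterated products via the common formula $a(a'_{R_3})_{R_1}\ot b_{R_1}b'_{R_2}\ot (c_{R_3})_{R_2}c'$ is correct.
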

\begin{proposition} (\cite{mp2})
Let $(H, \mu _H, \Delta _H, \alpha _H)$ be a Hom-bialgebra, $(A, \mu _A, \alpha _A)$ a left 
$H$-module Hom-algebra and 
$(C, \mu _C, \alpha _C)$ a right $H$-module Hom-algebra, with actions denoted by 
$H\ot A\rightarrow A$, $h\ot a\mapsto h\cdot a$ and 
$C\ot H\rightarrow C$, $c\ot h\mapsto c\cdot h$, and assume that the structure maps 
$\alpha _H$, $\alpha _A$, $\alpha _C$ are bijective. 
Then:\\
(i) We have the following Hom-twisting maps:
\begin{eqnarray*}
&&R_1:H\ot A\rightarrow A\ot H, \;\;\;R_1(h\ot a)=\alpha _H^{-2}(h_1)\cdot \alpha _A^{-1}(a)\ot 
\alpha _H^{-1}(h_2), \\
&&R_2:C\ot H\rightarrow H\ot C, \;\;\;R_2(c\ot h)=\alpha _H^{-1}(h_1) \ot \alpha _C^{-1}(c)\cdot 
\alpha _H^{-2}(h_2). 
\end{eqnarray*}
Thus, we can consider the Hom-associative algebras $A\ot _{R_1}H$ and 
$H\ot _{R_2}C$, which are denoted by $A\# H$ and respectively $H\# C$ and are called 
the left and respectively right {\em Hom-smash products}. If we denote $a\ot h:=a\# h$ and 
$h\# c:=h\ot c$, for $a\in A$, $h\in H$, $c\in C$, the multiplications of the smash products are given by 
\begin{eqnarray*}
&&(a\# h)(a'\# h')=a(\alpha _H^{-2}(h_1)\cdot \alpha _A^{-1}(a'))\# \alpha _H^{-1}(h_2)h', \\
&&(h\# c)(h'\# c')=h\alpha _H^{-1}(h'_1)\# (\alpha _C^{-1}(c)\cdot \alpha _H^{-2}(h'_2))c', 
\end{eqnarray*}
and the structure maps are $\alpha _A\ot \alpha _H$ and respectively $\alpha _H\ot \alpha _C$. \\
(ii) Consider as well the trivial Hom-twisting map $R_3:C\ot A\rightarrow A\ot C$, $R_3(c\ot a)=a\ot c$. Then 
$R_1$, $R_2$, $R_3$ satisfy the braid relation, so we can consider 
the iterated Hom-twisted tensor product $A\ot _{R_1}H\ot _{R_2}C$, which is denoted by 
$A\# H\# C$ and is called the {\em two-sided Hom-smash product}. Its structure map is 
$\alpha _A\ot \alpha _H\ot \alpha _C$ and 
its multiplication is defined by 
\begin{eqnarray*}
&&(a\# h\# c)(a'\# h'\# c')=a(\alpha _H^{-2}(h_1)\cdot \alpha _A^{-1}(a'))\# 
\alpha _H^{-1}(h_2h'_1)\#  (\alpha _C^{-1}(c)\cdot \alpha _H^{-2}(h'_2))c'.
\end{eqnarray*} 
\end{proposition}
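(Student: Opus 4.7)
The plan is to verify that $R_1$ and $R_2$ are Hom-twisting maps, establish the braid relation \eqref{hombraid} for the triple $(R_1, R_2, R_3)$, and then invoke \thref{homiterated} to obtain the iterated Hom-twisted tensor product $A\# H\# C$ with the stated structure map and multiplication. Once all three maps are known to be Hom-twisting maps satisfying the braid relation, the existence of the two-sided Hom-smash product is automatic, and its multiplication is obtained by specializing the general formula $(a\ot b)(a'\ot b')=aa'_R\ot b_Rb'$ to the two stages of the iteration and collecting terms.

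For part (i), I would check the three Hom-twisting-map axioms for $R_1(h\ot a)=\alpha_H^{-2}(h_1)\cdot\alpha_A^{-1}(a)\ot\alpha_H^{-1}(h_2)$. Condition \eqref{homsweed0} reduces, after using \eqref{hombia3} to write $\Delta(\alpha_H(h))=\alpha_H(h_1)\ot\alpha_H(h_2)$ and \eqref{hommod1} to commute $\alpha_A$ with the action, to the bijectivity of the structure maps. For \eqref{homsweed1} and \eqref{homsweed2}, the main inputs are the multiplicativity of $\Delta$ in \eqref{hombia2}, Hom-coassociativity \eqref{hombia1}, and the left module Hom-algebra axiom \eqref{modalgcompat}; the factor $\alpha_H^2$ appearing in \eqref{modalgcompat} is precisely what dictates the presence of $\alpha_H^{-2}$ in the definition of $R_1$, so all twists cancel on the nose. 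The argument for $R_2$ is entirely parallel, using the right module Hom-algebra condition and \eqref{righthommod1}--\eqref{righthommod2} in place of \eqref{hommod1}--\eqref{hommod2}.

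For part (ii) I first verify the braid relation. Since $R_3(c\ot a)=a\ot c$ is the trivial flip, evaluating both sides of \eqref{hombraid} on $c\ot h\ot a$ amounts to routing $h$ through either $R_1$ (its left $\Delta$-tensorand acting on $a$) or $R_2$ (its right $\Delta$-tensorand acting on $c$), while $a$ and $c$ never meet. The two resulting expressions match because Hom-coassociativity \eqref{hombia1} lets us move a single $\alpha_H$ between the two tensor slots of $\Delta(h)$, balancing the $\alpha_H^{-1}$ and $\alpha_H^{-2}$ twists that appear in $R_1$ versus $R_2$. With the braid relation secured, \thref{homiterated} yields the Hom-associative algebra $A\ot_{R_1}H\ot_{R_2}C$; the stated multiplication formula is then read off by successively applying the two twisting maps and simplifying with \eqref{hombia1}.

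The main obstacle will be the bookkeeping of powers of $\alpha_H$, $\alpha_A$ and $\alpha_C$: every use of \eqref{modalgcompat} introduces an $\alpha_H^2$ and each application of \eqref{hombia1} shifts a single $\alpha_H$ across a tensor factor, so one must track these shifts carefully, using bijectivity of the structure maps to invert them where needed, in order to verify \eqref{homsweed1}, \eqref{homsweed2} and \eqref{hombraid} exactly. Once these shifts are reconciled, the remainder of the argument is essentially formal.
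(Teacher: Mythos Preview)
Your proposal is correct and follows what is almost certainly the intended line of argument: verify the Hom-twisting-map axioms \eqref{homsweed0}--\eqref{homsweed2} for $R_1$ and $R_2$ using \eqref{hombia1}--\eqref{hombia3} together with the module Hom-algebra conditions, reduce the braid relation \eqref{hombraid} (since $R_3$ is the flip) to the identity $a_{R_1}\ot (h_{R_1})_{R_2}\ot c_{R_2}=a_{R_1}\ot (h_{R_2})_{R_1}\ot c_{R_2}$, and observe that this last identity is precisely Hom-coassociativity \eqref{hombia1} after the powers of $\alpha_H$ are tracked. Note, however, that the paper itself does not supply a proof of this proposition: it is quoted verbatim from \cite{mp2} as background material in the preliminaries, so there is no in-paper argument to compare against. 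Your sketch is the natural one and matches the structure of the analogous computations carried out elsewhere in the paper (e.g.\ in the proofs of \prref{mis} and of the Hom-L-R-smash product theorem).
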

\begin{definition} (\cite{homquantum1}, \cite{homquantum2}) 
Let $(H, \mu , \Delta, \alpha )$ be a Hom-bialgebra and $R\in H\ot H$ an element, with 
Sweedler-type notation $R=R^1\ot R^2=r^1\ot r^2$. Then $(H, \mu , \Delta, \alpha , R)$ is called 
{\em quasitriangular Hom-bialgebra} if the following axioms are satisfied: 
\begin{eqnarray}
&&(\Delta \ot \alpha )(R)=\alpha (R^1)\ot \alpha (r^1)\ot R^2r^2, \label{homQT1} \\
&&(\alpha \ot \Delta )(R)=R^1r^1\ot \alpha (r^2)\ot \alpha (R^2), \label{homQT2} \\
&&\Delta ^{cop}(h)R=R\Delta (h), \label{homQT3}
\end{eqnarray}
for all $h\in H$, where we denoted as usual $\Delta ^{cop }(h)=h_2\ot h_1$. 
\end{definition}
\section{Hom-L-R-twisted tensor products of algebras}
\setcounter{equation}{0}
${\;\;\;}$We introduce the Hom-analogue of Proposition \ref{LRtwpr}.
\begin{proposition}\label{homlrttp}
Let $(A, \mu _A, \alpha _A)$ and $(B, \mu _B, \alpha _B)$ 
be two Hom-associative algebras and $R:B\ot A 
\rightarrow A\ot B$, $Q:A\ot B\rightarrow A\ot B$  two linear maps, with 
notation $R(b\ot a)=a_R\ot b_R=a_r\ot b_r$ and 
$Q(a\otimes b)=a_Q\otimes b_Q=a_q\ot b_q$, for all $a\in A$, $b\in B$, satisfying the conditions: 
\begin{eqnarray}
&&\alpha _A(a_R)\ot \alpha _B(b_R)=\alpha _A(a)_R\ot \alpha _B(b)_R, \label{lrhom1} \\
&&\alpha _A(a_Q)\ot \alpha _B(b_Q)=\alpha _A(a)_Q\ot \alpha _B(b)_Q, \label{lrhom2} \\
&&(aa')_R\ot \alpha _B(b)_R=a_Ra'_r\ot \alpha _B((b_R)_r), \label{lrhom3} \\
&&\alpha _A(a)_R\ot (bb')_R=\alpha _A((a_R)_r)\ot b_rb'_R, \label{lrhom4}\\
&&(aa')_Q\ot \alpha _B(b)_Q=a_qa'_Q\ot \alpha _B((b_Q)_q), \label{lrhom5} \\
&&\alpha _A(a)_Q\ot (bb')_Q=\alpha _A((a_Q)_q)\ot b_Qb'_q, \label{lrhom6}\\
&&b_R\otimes (a_R)_Q\otimes b'_Q=b_R\otimes (a_Q)_R\otimes b'_Q, 
\label{lrhom7} \\
&&a_R\otimes (b_R)_Q\otimes a'_Q=a_R\otimes (b_Q)_R\otimes a'_Q, 
\label{lrhom8}
\end{eqnarray}
for all $a, a'\in A$ and $b, b'\in B$. Define a new 
multiplication on $A\ot B$ by 
$(a\ot b)(a'\ot b')=a_Qa'_R\ot b_Rb'_Q$. Then $A\ot B$ with this multiplication 
is a Hom-associative algebra with 
structure map $\alpha _A\ot \alpha _B$, denoted by $A\; _Q\otimes _RB$ and called the 
{\em Hom-L-R-twisted tensor 
product} of $A$ and $B$ afforded by the maps $R$ and $Q$. 
\end{proposition}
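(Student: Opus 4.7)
The plan is to verify directly, following the template of the proof of the classical Proposition~\ref{LRtwpr} from \cite{cp}, the two axioms defining a Hom-associative algebra for the structure $(A\ot B,\;(a\ot b)(a'\ot b')=a_Qa'_R\ot b_Rb'_Q,\;\alpha_A\ot \alpha_B)$, with each ordinary associativity or distributivity step of that proof replaced by its Hom-analogue from among (\ref{lrhom1})--(\ref{lrhom8}).

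Multiplicativity of $\alpha_A\ot \alpha_B$ is the quick part. Expanding $(\alpha_A\ot \alpha_B)((a\ot b)(a'\ot b'))=\alpha_A(a_Qa'_R)\ot \alpha_B(b_Rb'_Q)$, distributing $\alpha_A,\alpha_B$ over the products using their multiplicativity, and then invoking (\ref{lrhom1})--(\ref{lrhom2}) to rewrite $\alpha_A(a_Q),\alpha_A(a'_R),\alpha_B(b_R),\alpha_B(b'_Q)$ as $\alpha_A(a)_Q,\alpha_A(a')_R,\alpha_B(b)_R,\alpha_B(b')_Q$, one recognises $(\alpha_A(a)\ot \alpha_B(b))(\alpha_A(a')\ot \alpha_B(b'))$.

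For Hom-associativity, I expand both sides of
\[
(\alpha_A(a)\ot \alpha_B(b))\bigl((a'\ot b')(a''\ot b'')\bigr)=\bigl((a\ot b)(a'\ot b')\bigr)(\alpha_A(a'')\ot \alpha_B(b'')).
\]
On the left, the outer product calls for $R$ applied to $\alpha_B(b)\ot (a'_Qa''_R)$ and $Q$ applied to $\alpha_A(a)\ot (b'_Rb''_Q)$; I resolve the first via (\ref{lrhom3}) (distributing $R$ over a product sitting in its $A$-slot) and the second via (\ref{lrhom6}) (distributing $Q$ over a product sitting in its $B$-slot). On the right, the outer product calls for $R$ applied to $(b_Rb'_Q)\ot \alpha_A(a'')$ and $Q$ applied to $(a_Qa'_R)\ot \alpha_B(b'')$; these are resolved using (\ref{lrhom4}) and (\ref{lrhom5}) respectively. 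Each side then becomes a tensor involving two copies of $R$ and two of $Q$ acting on $a,a',a'',b,b',b''$, with an $\alpha_A$ attached to the $a$-component (resp.\ $a''$-component) on the left (resp.\ right) and symmetrically for $\alpha_B$. One now applies the Hom-associativity of $\mu_A$ and $\mu_B$ inside the two tensor slots to convert the left-hand expression from $\alpha_A(x)(yz)\ot \alpha_B(u)(vw)$ form into $(xy)\alpha_A(z)\ot (uv)\alpha_B(w)$ form, matching the natural form of the right-hand expression; the compatibility axioms (\ref{lrhom7}) and (\ref{lrhom8}) then allow swapping the order in which the outer $R$ and $Q$ touch the shared inner arguments $a',b'$, identifying the two Sweedler-type expressions term by term.

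The main obstacle is the delicate bookkeeping: with the two outer Sweedler operations colliding with the two inner ones coming from the already-computed sub-products, one must carefully track which instance of $R$ or $Q$ acts on which element (keeping $R,r$ and $Q,q$ separate throughout), and invoke (\ref{lrhom7})--(\ref{lrhom8}) at exactly the right moments to reconcile the subscripts. The conditions (\ref{lrhom1})--(\ref{lrhom2}) may additionally be needed during the alignment, to pass $\alpha_A,\alpha_B$ past $R$ and $Q$ so that matching Sweedler components end up with matching structure-map prefixes.
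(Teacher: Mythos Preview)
Your proposal is correct and follows essentially the same approach as the paper: multiplicativity via (\ref{lrhom1})--(\ref{lrhom2}), then Hom-associativity by expanding with (\ref{lrhom3}),(\ref{lrhom6}) on the left, (\ref{lrhom4}),(\ref{lrhom5}) on the right, applying Hom-associativity of $A$ and $B$, and using (\ref{lrhom7}),(\ref{lrhom8}) to reconcile the Sweedler subscripts. The only cosmetic difference is that the paper writes a single chain of equalities from the left-hand side to the right-hand side rather than expanding both and meeting in the middle, and in that chain the compatibilities (\ref{lrhom1})--(\ref{lrhom2}) do not need to be invoked again during the Hom-associativity computation.
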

\begin{proof}
The fact that $\alpha _A\ot \alpha _B$ is multiplicative follows immediately from (\ref{lrhom1}) and 
(\ref{lrhom2}). Now we compute (denoting $R=r=\mathcal{R}$ and $Q=q=\tilde{Q}$):\\[2mm]
${\;\;\;}$$(\alpha _A\ot \alpha _B)(a\ot b)[(a'\ot b')(a''\ot b'')]$
\begin{eqnarray*}
&=&(\alpha _A(a)\ot \alpha _B(b))(a'_Qa''_R\ot b'_Rb''_Q)\\
&=&\alpha _A(a)_q(a'_Qa''_R)_r\ot \alpha _B(b)_r(b'_Rb''_Q)_q\\
&\overset{(\ref{lrhom3}), \;(\ref{lrhom6})}{=}&\alpha _A((a_{\tilde{Q}})_q)((a'_Q)_{\mathcal{R}}
(a''_R)_r)\ot \alpha _B((b_{\mathcal{R}})_r)((b'_R)_{\tilde{Q}}(b''_Q)_q)\\
&\overset{Hom-assoc.}{=}&((a_{\tilde{Q}})_q(a'_Q)_{\mathcal{R}})
\alpha _A((a''_R)_r)\ot ((b_{\mathcal{R}})_r(b'_R)_{\tilde{Q}})\alpha _B((b''_Q)_q)\\
&\overset{(\ref{lrhom7}), \;(\ref{lrhom8})}{=}&((a_{\tilde{Q}})_q(a'_{\mathcal{R}})_Q)
\alpha _A((a''_R)_r)\ot ((b_{\mathcal{R}})_r(b'_{\tilde{Q}})_R)\alpha _B((b''_Q)_q)\\
&\overset{(\ref{lrhom4}), \;(\ref{lrhom5})}{=}&(a_{\tilde{Q}}a'_{\mathcal{R}})_Q
\alpha _A(a'')_R\ot (b_{\mathcal{R}}b'_{\tilde{Q}})_R\alpha _B(b'')_Q\\
&=&(a_{\tilde{Q}}a'_{\mathcal{R}}\ot b_{\mathcal{R}}b'_{\tilde{Q}})(\alpha _A(a'')\ot \alpha _B(b''))\\
&=&[(a\ot b)(a'\ot b')](\alpha _A\ot \alpha _B)(a''\ot b''),
\end{eqnarray*}
finishing the proof.
\end{proof}

Obviously, a Hom-twisted tensor product $A\ot _RB$ is a particular case of a Hom-L-R-twisted tensor product, 
namely $A\ot _RB=A\; _Q\otimes _RB$, where $Q=id_{A\ot B}$. On the other hand, if $A$ and $B$ 
are Hom-associative algebras and $Q:A\ot B\rightarrow A\ot B$ is a linear map satisfying the conditions 
(\ref{lrhom2}), (\ref{lrhom5}) and (\ref{lrhom6}), then the multiplication $(a\ot b)(a'\ot b')=a_Qa'\ot bb'_Q$ 
defines a Hom-associative algebra structure on $A\ot B$, denoted by $A\; _Q\otimes B$; this is a 
particular case of a  Hom-L-R-twisted tensor product, namely $A\; _Q\otimes B=A\; _Q\otimes _RB$, 
where $R$ is the flip map $b\ot a\mapsto a\ot b$. Also, if $A\; _Q\otimes _RB$ is a 
Hom-L-R-twisted tensor product, we can consider as well the Hom-associative algebras $A\ot _RB$ and 
$A\; _Q\otimes B$. 

By using some computations similar to the ones performed in \cite{mp2}, 
Propositions 2.6 and 2.10, one can prove the following two results:
\begin{proposition}
Let $(A, \mu _A, \alpha _A)$ and $(B, \mu _B, \alpha _B)$ 
be Hom-associative algebras and $A\; _Q\otimes _RB$ a Hom-L-R-twisted tensor product. Define the linear maps 
$T, U, V:(A\ot B)\ot (A\ot B)\rightarrow (A\ot B)\ot (A\ot B)$, by 
\begin{eqnarray*}
&&T((a\ot b)\ot (a'\ot b'))=(a_Q\ot b_R)\ot (a'_R\ot b'_Q), \\
&&U((a\ot b)\ot (a'\ot b'))=(a_Q\ot b)\ot (a'\ot b'_Q), \\
&&V((a\ot b)\ot (a'\ot b'))=(a\ot b_R)\ot (a'_R\ot b'). 
\end{eqnarray*}
Then $T$ is a Hom-twistor for $A\ot B$, $U$ is a Hom-twistor for $A\ot _RB$, $V$ is a Hom-twistor for 
$A\; _Q\otimes B$ and $A\; _Q\otimes _RB=(A\ot B)^T=(A\ot _RB)^U=(A\; _Q\otimes B)^V$ as 
Hom-associative algebras. 
\end{proposition}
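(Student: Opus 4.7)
My plan is to verify, in turn, (a) that the three twisted products $(A\otimes B)^T$, $(A\otimes _R B)^U$ and $(A\;_Q\otimes B)^V$ carry the same multiplication as $A\;_Q\otimes _R B$, and (b) that the maps $T$, $U$, $V$ really are Hom-twistors in the sense of Proposition~\ref{Def-HomTwistor} on their respective base Hom-associative algebras. Part (a) is a direct unfolding: applying $T$ first and then the componentwise product of $A\otimes B$ gives $a_Qa'_R\otimes b_Rb'_Q$; applying $U$ first and then the $R$-twisted product of $A\otimes _RB$ yields the same expression (the $R$ built into $A\otimes _RB$ acts on $a'$ and $b$, not on the already $Q$-primed components); and symmetrically for $V$ on $A\;_Q\otimes B$. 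So once the four Hom-associative algebra structures are in hand they visibly coincide.

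For $T$ on $(A\otimes B,\alpha_A\otimes\alpha_B)$ with the componentwise product, I would check the four axioms of Proposition~\ref{Def-HomTwistor} one at a time. Multiplicativity (\ref{multtwistor}) is immediate from (\ref{lrhom1}) and (\ref{lrhom2}). The first hexagon (\ref{homtwistor1}) unfolds on the left via a single application of $T$ to a pair whose second slot is a componentwise product; (\ref{lrhom3}) then pushes $R$ through the product in $B$, (\ref{lrhom6}) pushes $Q$ through the product in $B$, and after Hom-associativity of $A$ and $B$ one recovers the right-hand side, which is two successive applications of $T$ composed with $\alpha\otimes\mu$. The second hexagon (\ref{homtwistor2}) is handled symmetrically, using (\ref{lrhom4}) and (\ref{lrhom5}). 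For the braid-type axiom (\ref{homtwistor3}), one writes out both $T_{12}\circ T_{23}$ and $T_{23}\circ T_{12}$ on $(A\otimes B)^{\otimes 3}$ in Sweedler notation; the outer tensor factors match after a harmless renaming of dummy $Q$'s and $R$'s, while equality on the middle tensor factor is exactly the content of (\ref{lrhom7}) and (\ref{lrhom8}).

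The verifications for $U$ on $A\otimes _RB$ and $V$ on $A\;_Q\otimes B$ are parallel, but with one extra subtlety: the hexagon axioms for $U$ (resp.\ $V$) now invoke the multiplication of $A\otimes _RB$ (resp.\ $A\;_Q\otimes B$), which already contains an $R$ (resp.\ $Q$). Consequently, pushing $Q$ (resp.\ $R$) through it requires not only (\ref{lrhom5}) and (\ref{lrhom6}) (resp.\ (\ref{lrhom3}) and (\ref{lrhom4})), but also the compatibility axioms (\ref{lrhom7}) and (\ref{lrhom8}) to commute the two twisting maps past one another; for $U$ the commutation axiom (\ref{homtwistor3}) reduces to the analogous statement for the single map $Q$, which is forced by (\ref{lrhom5}) and (\ref{lrhom6}), and symmetrically for $V$. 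I expect the main obstacle to be precisely this bookkeeping in the hexagon identities for $U$ and $V$, together with the braid identity (\ref{homtwistor3}) for $T$: keeping track of which $Q$ and $R$ labels pair which tensor slots across a triple product is delicate, but, as the authors indicate, the computations are entirely parallel to those of Propositions~2.6 and 2.10 of~\cite{mp2}.
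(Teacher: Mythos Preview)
Your proposal is correct and follows precisely the route the paper indicates: the paper itself gives no detailed proof, merely pointing to the analogous computations in \cite{mp2}, Propositions~2.6 and~2.10, and your plan of directly verifying the four Hom-twistor axioms for each of $T$, $U$, $V$ using the axioms (\ref{lrhom1})--(\ref{lrhom8}) is exactly that computation. A small simplification: the commutation axiom (\ref{homtwistor3}) for $U$ (and symmetrically for $V$) is actually automatic, since $U_{12}$ and $U_{23}$ act on disjoint pairs of tensor slots, so no appeal to (\ref{lrhom5})--(\ref{lrhom6}) is needed there.
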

\begin{proposition}\label{deformLRttp}
Let $(A, \mu _A)$ and $(B, \mu _B)$ be two  associative algebras, $\alpha _A:A\rightarrow A$ and 
$\alpha _B:B\rightarrow B$ algebra maps and $A\; _Q\otimes _RB$ an L-R-twisted tensor product
such that $(\alpha _A\ot \alpha _B)\circ R=R\circ (\alpha _B\ot \alpha _A)$ and  
$(\alpha _A\ot \alpha _B)\circ Q=Q\circ (\alpha _A\ot \alpha _B)$. Then we have a 
Hom-L-R-twisted tensor product $A_{\alpha _A}\;_Q\ot _RB_{\alpha _B}$, which coincides with 
$(A\;_Q\ot _RB)_{\alpha _A\ot \alpha _B}$ as Hom-associative algebras.
\end{proposition}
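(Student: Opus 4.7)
The plan is to verify that the eight axioms (\ref{lrhom1})--(\ref{lrhom8}) hold for the maps $R$ and $Q$ regarded as linear maps between the Hom-associative algebras $A_{\alpha_A}$ and $B_{\alpha_B}$, and then to observe that the resulting Hom-multiplication on $A\ot B$ agrees on the nose with the Yau twist of the classical L-R-twisted tensor product multiplication.

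First I would dispose of the easy axioms. Conditions (\ref{lrhom1}) and (\ref{lrhom2}) are literally the assumed compatibilities $(\alpha_A\ot\alpha_B)\circ R=R\circ(\alpha_B\ot\alpha_A)$ and $(\alpha_A\ot\alpha_B)\circ Q=Q\circ(\alpha_A\ot\alpha_B)$ rewritten in Sweedler notation, so they hold by hypothesis. At the other end, (\ref{lrhom7}) and (\ref{lrhom8}) are word-for-word identical to (\ref{comb1}) and (\ref{comb2}) from Proposition \ref{LRtwpr}; they involve only $R$ and $Q$ (no products, no structure maps), so they transfer to the Hom setting for free.

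The main bookkeeping is (\ref{lrhom3})--(\ref{lrhom6}). I would treat (\ref{lrhom3}) in detail and leave the other three as analogous. Write $\cdot_\alpha$ for the twisted product $a\cdot_\alpha a'=\alpha_A(aa')$ in $A_{\alpha_A}$ and likewise in $B_{\alpha_B}$. The left-hand side of (\ref{lrhom3}) in the Hom setting reads $\alpha_A(aa')_R\ot\alpha_B(b)_R$; by the $R$-compatibility this equals $\alpha_A((aa')_R)\ot\alpha_B(b_R)$. Applying $\alpha_A\ot\alpha_B$ to the classical identity (\ref{tw4}) and using that $\alpha_A$ is an algebra map gives $\alpha_A((aa')_R)\ot\alpha_B(b_R)=\alpha_A(a_R)\alpha_A(a'_r)\ot\alpha_B((b_R)_r)=(a_R\cdot_\alpha a'_r)\ot\alpha_B((b_R)_r)$, which is the right-hand side of (\ref{lrhom3}) in $A_{\alpha_A}\;_Q\ot_R B_{\alpha_B}$. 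The identities (\ref{lrhom4}), (\ref{lrhom5}), (\ref{lrhom6}) follow from (\ref{tw5}), (\ref{tw4'}), (\ref{tw5'}) by the same pattern (push $\alpha_A\ot\alpha_B$ through $R$ or $Q$ by compatibility, then use multiplicativity of $\alpha_A$ or $\alpha_B$).

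For the final identification, compare multiplications: in $A_{\alpha_A}\;_Q\ot_R B_{\alpha_B}$ one has $(a\ot b)(a'\ot b')=a_Q\cdot_\alpha a'_R\ot b_R\cdot_\alpha b'_Q=\alpha_A(a_Qa'_R)\ot\alpha_B(b_Rb'_Q)$, while in $(A\;_Q\ot_R B)_{\alpha_A\ot\alpha_B}$ one applies $\alpha_A\ot\alpha_B$ to the classical product $a_Qa'_R\ot b_Rb'_Q$, producing the same expression; the structure maps agree by definition. There is no real obstacle here beyond keeping track of notation: the only subtle point is remembering that products in $A_{\alpha_A}$ and $B_{\alpha_B}$ absorb one application of the structure map, and that the compatibility of $R$, $Q$ with $\alpha_A\ot\alpha_B$ is what lets us move these extra $\alpha$'s inside the Sweedler indices.
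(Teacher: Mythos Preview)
Your proof is correct and is exactly the kind of direct verification the paper has in mind: the paper gives no explicit argument for this proposition, merely pointing to analogous computations in \cite{mp2}, and your axiom-by-axiom check of (\ref{lrhom1})--(\ref{lrhom8}) followed by the comparison of multiplications is precisely that computation carried out. The only point you leave implicit is that $\alpha_A\ot\alpha_B$ is an algebra endomorphism of the classical $A\;_Q\ot_R B$ (so that the Yau twist $(A\;_Q\ot_R B)_{\alpha_A\ot\alpha_B}$ is defined), but this follows immediately from the compatibility hypotheses on $R$ and $Q$ together with the multiplicativity of $\alpha_A$ and $\alpha_B$, by the same one-line computation you use at the end.
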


${\;\;\;}$The next result is the Hom-analogue of \cite{cp}, Proposition 2.9.
\begin{proposition} \label{biject}
Let $A\; _Q\otimes _RB$ be a Hom-L-R-twisted tensor product of the Hom-associative algebras 
$(A, \mu _A, \alpha _A)$ and $(B, \mu _B, \alpha _B)$ with bijective structure maps $\alpha _A$ and 
$\alpha _B$ and assume that $Q$ is bijective with inverse $Q^{-1}$. 
Then the map $P:B\otimes A\rightarrow A\otimes B$ defined by $P=Q^{-1}\circ R$ is a  
Hom-twisting map, and we have an isomorphism of Hom-associative algebras 
$Q:A\otimes _PB\simeq A\; _Q\otimes _RB$.
\end{proposition}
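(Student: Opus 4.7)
The goal is two-fold: verify that $P := Q^{-1}\circ R$ satisfies the Hom-twisting conditions (\ref{homsweed0})--(\ref{homsweed2}), and then check that $Q$ provides an isomorphism of Hom-associative algebras $A\ot _PB\simeq A\; _Q\ot _RB$. Condition (\ref{homsweed0}) for $P$ is immediate: by (\ref{lrhom1}) the map $R$ commutes with the structure maps, by (\ref{lrhom2}) so does $Q$, whence by bijectivity so does $Q^{-1}$, and composition gives the property for $P$. For (\ref{homsweed1}), my plan is to apply $Q$ to both sides of the desired identity $(aa')_P\ot \alpha _B(b)_P = a_Pa'_p\ot \alpha _B((b_P)_p)$. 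On the left, $Q\circ P = R$ followed by (\ref{lrhom3}) produces $a_Ra'_r\ot \alpha _B((b_R)_r)$. On the right, (\ref{lrhom5}) first distributes the outer $Q$ over the product $a_Pa'_p$; then the intertwining identities (\ref{lrhom7})--(\ref{lrhom8}) let us slide the resulting copies of $Q$ past the copies of $P$, so that successive applications of $Q\circ P = R$ yield the same expression $a_Ra'_r\ot \alpha _B((b_R)_r)$. Injectivity of $Q$ then gives the identity. Condition (\ref{homsweed2}) is symmetric, using (\ref{lrhom4}) and (\ref{lrhom6}).

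For the algebra isomorphism, I would verify $Q((a\ot b)\cdot _P(a'\ot b'))=Q(a\ot b)\cdot _{Q,R}Q(a'\ot b')$ by the same procedure: expand the left-hand side $Q(a\cdot a'_P\ot b_P\cdot b')$ using (\ref{lrhom5})--(\ref{lrhom6}), and then apply $Q\circ P = R$ combined with (\ref{lrhom7})--(\ref{lrhom8}) to arrive at the form $a_Q a'_R\ot b_R b'_Q$, which by definition is the product in $A\; _Q\ot _RB$ of $Q(a\ot b)$ and $Q(a'\ot b')$. Since $Q$ is bijective and commutes with the structure maps by (\ref{lrhom2}), it is then an isomorphism of Hom-associative algebras.

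The principal obstacle will be the Sweedler-style bookkeeping: composing $P = Q^{-1}R$ in these identities introduces multiple independent copies of $Q$ alongside copies of $R$, and careful rearrangement via (\ref{lrhom7})--(\ref{lrhom8}) is required for the expressions on the two sides to match. The argument parallels the non-Hom case of \cite{cp}, Proposition 2.9, with the $\alpha$-twists handled uniformly via (\ref{lrhom1})--(\ref{lrhom2}).
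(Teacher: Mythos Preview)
Your proposal is correct and follows essentially the same strategy as the paper: apply $Q$ to the desired identities, use (\ref{lrhom5})--(\ref{lrhom6}) to distribute $Q$ over products, then exploit $Q\circ P=R$ together with the commutation relations (\ref{lrhom7})--(\ref{lrhom8}) so that the $Q^{-1}$'s hidden in $P$ cancel, reducing everything to the known identities for $R$. One notational slip: the target expression for $Q(a\ot b)\cdot_{Q,R}Q(a'\ot b')$ is not $a_Qa'_R\ot b_Rb'_Q$ but rather $(a_q)_{\tilde q}(a'_{\tilde Q})_R\ot (b_q)_R(b'_{\tilde Q})_{\tilde q}$, since applying $Q$ first and then multiplying in $A\;_Q\ot_RB$ introduces two independent copies of $Q$ on each tensor factor; this is exactly the form the paper's computation reaches before collapsing to $Q(a\ot b)Q(a'\ot b')$.
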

\begin{proof}
Let $a, a'\in A$ and $b, b'\in B$; we denote $Q^{-1}(a\ot b)=a_{Q^{-1}}\ot b_{Q^{-1}}=a_{q^{-1}}\ot 
b_{q^{-1}}$, $P(b\ot a)=a_P\ot b_P=a_p\ot b_p$, $Q=q=\overline{Q}=\overline{q}$. 
The relation (\ref{homsweed0}) for $P$ follows immediately from (\ref{lrhom1}) and (\ref{lrhom2}). 
Now we compute: 
\begin{eqnarray*}
Q(a_Pa'_p\ot \alpha _B((b_P)_p))&=&((a_R)_{Q^{-1}}(a'_r)_{q^{-1}})_Q\ot 
\alpha _B((((b_R)_{Q^{-1}})_r)_{q^{-1}})_Q\\
&\overset{(\ref{lrhom5})}{=}&((a_R)_{Q^{-1}})_Q((a'_r)_{q^{-1}})_{\overline{Q}}\ot 
\alpha _B((((((b_R)_{Q^{-1}})_r)_{q^{-1}})_{\overline{Q}})_Q)\\
&=&((a_R)_{Q^{-1}})_Qa'_r\ot \alpha _B((((b_R)_{Q^{-1}})_r)_Q)\\
&\overset{(\ref{lrhom8})}{=}&((a_R)_{Q^{-1}})_Qa'_r\ot \alpha _B((((b_R)_{Q^{-1}})_Q)_r)\\
&=&a_Ra'_r\ot \alpha _B((b_R)_r)\\
&\overset{(\ref{lrhom3})}{=}&(aa')_R\ot \alpha _B(b)_R=R(\alpha _B(b)\ot aa').
\end{eqnarray*}
By applying $Q^{-1}$ to this equality we obtain $P(\alpha _B(b)\ot aa')=a_Pa'_p\ot \alpha _B((b_P)_p)$, 
which is (\ref{homsweed1}) for $P$. Similarly one can prove (\ref{homsweed2}) for $P$, so $P$ is indeed 
a Hom-twisting map. Since $Q$ satisfies (\ref{lrhom2}), the only thing left to prove is that 
$Q:A\otimes _PB\rightarrow A\; _Q\otimes _RB$ is multiplicative. We compute:\\[2mm]
${\;\;\;}$
$Q((a\ot b)(a'\ot b'))$
\begin{eqnarray*}
&=&(aa'_P)_Q\ot (b_Pb')_Q\\
&=&(aa'_P)_Q\ot \alpha _B(\alpha _B^{-1}(b_Pb'))_Q\\
&\overset{(\ref{lrhom5})}{=}&a_q(a'_P)_Q\ot \alpha _B((\alpha _B^{-1}(b_Pb')_Q)_q)\\
&=&a_q(a'_P)_Q\ot \alpha _B(((\alpha _B^{-1}(b_P)\alpha _B^{-1}(b'))_Q)_q)\\
&=&a_q\alpha _A(\alpha _A^{-1}(a'_P))_Q\ot \alpha _B(((\alpha _B^{-1}(b_P)\alpha _B^{-1}(b'))_Q)_q)\\
&\overset{(\ref{lrhom6})}{=}&a_q\alpha _A((\alpha _A^{-1}(a'_P)_Q)_{\overline{Q}})\ot 
\alpha _B((\alpha _B^{-1}(b_P)_Q\alpha _B^{-1}(b')_{\overline{Q}})_q)\\
&=&\alpha _A(\alpha _A^{-1}(a))_q\alpha _A((\alpha _A^{-1}(a'_P)_Q)_{\overline{Q}})\ot 
\alpha _B((\alpha _B^{-1}(b_P)_Q\alpha _B^{-1}(b')_{\overline{Q}})_q)\\
&\overset{(\ref{lrhom6})}{=}&\alpha _A((\alpha _A^{-1}(a)_q)_{\overline{q}})
\alpha _A((\alpha _A^{-1}(a'_P)_Q)_{\overline{Q}})\ot 
\alpha _B((\alpha _B^{-1}(b_P)_Q)_q(\alpha _B^{-1}(b')_{\overline{Q}})_{\overline{q}})\\
&\overset{(\ref{homsweed0})}{=}&\alpha _A((\alpha _A^{-1}(a)_q)_{\overline{q}})
\alpha _A(((\alpha _A^{-1}(a')_P)_Q)_{\overline{Q}})\ot 
\alpha _B(((\alpha _B^{-1}(b)_P)_Q)_q(\alpha _B^{-1}(b')_{\overline{Q}})_{\overline{q}})\\
&=&\alpha _A((\alpha _A^{-1}(a)_q)_{\overline{q}})
\alpha _A((\alpha _A^{-1}(a')_R)_{\overline{Q}})\ot 
\alpha _B((\alpha _B^{-1}(b)_R)_q(\alpha _B^{-1}(b')_{\overline{Q}})_{\overline{q}})\\
&\overset{(\ref{lrhom7})}{=}&\alpha _A((\alpha _A^{-1}(a)_q)_{\overline{q}})
\alpha _A((\alpha _A^{-1}(a')_{\overline{Q}})_R)\ot 
\alpha _B((\alpha _B^{-1}(b)_R)_q(\alpha _B^{-1}(b')_{\overline{Q}})_{\overline{q}})\\
&\overset{(\ref{lrhom8})}{=}&\alpha _A((\alpha _A^{-1}(a)_q)_{\overline{q}}
(\alpha _A^{-1}(a')_{\overline{Q}})_R)\ot 
\alpha _B((\alpha _B^{-1}(b)_q)_R(\alpha _B^{-1}(b')_{\overline{Q}})_{\overline{q}})\\
&\overset{(\ref{lrhom2})}{=}&\alpha _A(\alpha _A^{-1}(a_q)_{\overline{q}}
\alpha _A^{-1}(a'_{\overline{Q}})_R)\ot 
\alpha _B(\alpha _B^{-1}(b_q)_R\alpha _B^{-1}(b'_{\overline{Q}})_{\overline{q}})\\
&\overset{(\ref{lrhom1}), \;(\ref{lrhom2})}{=}&\alpha _A(\alpha _A^{-1}((a_q)_{\overline{q}})
\alpha _A^{-1}((a'_{\overline{Q}})_R))\ot 
\alpha _B(\alpha _B^{-1}((b_q)_R)\alpha _B^{-1}((b'_{\overline{Q}})_{\overline{q}}))\\
&=&(a_q)_{\overline{q}}(a'_{\overline{Q}})_R\ot 
(b_q)_R(b'_{\overline{Q}})_{\overline{q}}\\
&=&(a_q\ot b_q)(a'_{\overline{Q}}\ot b'_{\overline{Q}})
=Q(a\ot b)Q(a'\ot b'),
\end{eqnarray*}
finishing the proof.
\end{proof}
\begin{proposition}\label{mis}
Let $A\ot _{R_1}B\ot _{R_2}C$ be an iterated Hom-twisted tensor product of the Hom-associative algebras 
$(A, \mu _A, \alpha _A)$, $(B, \mu _B, \alpha _B)$, $(C, \mu _C, \alpha _C)$ for which the map 
$R_3:C\ot A\rightarrow A\ot C$ is the flip map $c\ot a\mapsto a\ot c$. Define the linear maps 
\begin{eqnarray*}
&&R:B\ot (A\ot C)\rightarrow (A\ot C)\ot B, \;\;\;R(b\ot (a\ot c))=(a_{R_1}\ot c)\ot b_{R_1}, \\
&&Q:(A\ot C)\ot B\rightarrow (A\ot C)\ot B, \;\;\;Q((a\ot c)\ot b)=(a\ot c_{R_2})\ot b_{R_2}.
\end{eqnarray*}
Then we have a Hom-L-R-twisted tensor product $(A\ot C)\;_Q\ot _RB$, and an isomorphism of 
Hom-associative algebras $A\ot _{R_1}B\ot _{R_2}C\simeq (A\ot C)\;_Q\ot _RB$, 
$a\ot b\ot c \mapsto (a\ot c)\ot b$.
\end{proposition}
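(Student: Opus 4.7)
The plan is to deduce both claims from the axioms of $R_1$ and $R_2$ as Hom-twisting maps together with the braid condition (\ref{hombraid}), which holds by hypothesis since $A\ot _{R_1}B\ot _{R_2}C$ is an iterated Hom-twisted tensor product. Writing $R_1(b\ot a)=a_{R_1}\ot b_{R_1}$ and $R_2(c\ot b)=b_{R_2}\ot c_{R_2}$, the formulas of the statement unpack as $(a\ot c)_R=a_{R_1}\ot c$, $b_R=b_{R_1}$, $(a\ot c)_Q=a\ot c_{R_2}$ and $b_Q=b_{R_2}$, so $R$ affects only the $A$- and $B$-components of $(A\ot C)\ot B$ while $Q$ affects only the $C$- and $B$-components. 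Since the tensor product $(A\ot C,\alpha_A\ot\alpha_C)$ is itself Hom-associative, the setup of Proposition \ref{homlrttp} makes sense.

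Once this unpacking is in hand, verifying (\ref{lrhom1})--(\ref{lrhom8}) is mechanical. Conditions (\ref{lrhom1}), (\ref{lrhom3}), (\ref{lrhom4}) for $R$ reduce verbatim to the Hom-twisting axioms (\ref{homsweed0})--(\ref{homsweed2}) for $R_1$, with the $C$-slot riding along as a spectator; dually (\ref{lrhom2}), (\ref{lrhom5}), (\ref{lrhom6}) for $Q$ reduce to the corresponding axioms for $R_2$ with the $A$-slot inert. Condition (\ref{lrhom7}) is immediate by inspection, since the $R_1$- and $R_2$-actions there involve the distinct $B$-elements $b$ and $b'$ and therefore commute trivially. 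Condition (\ref{lrhom8}) involves the same $B$-element $b$ twice, and after cancelling the identical $A\ot C$ factors it reduces to the identity $a_{R_1}\ot(b_{R_1})_{R_2}\ot c'_{R_2}=a_{R_1}\ot(b_{R_2})_{R_1}\ot c'_{R_2}$ in $A\ot B\ot C$, which is exactly (\ref{hombraid}) specialized to $R_3=$ flip. Once the axioms hold, Proposition \ref{homlrttp} delivers the Hom-associative algebra $(A\ot C)\;_Q\ot _RB$.

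For the isomorphism, I would compare the two multiplications directly. Presenting $A\ot _{R_1}B\ot _{R_2}C$ as $(A\ot _{R_1}B)\ot _{P_1}C$ with $P_1=(id_A\ot R_2)\circ(R_3\ot id_B)$ yields
\[
(a\ot b\ot c)(a'\ot b'\ot c')=aa'_{R_1}\ot b_{R_1}b'_{R_2}\ot c_{R_2}c',
\]
while the defining formula for the Hom-L-R-twisted tensor product gives
\[
((a\ot c)\ot b)((a'\ot c')\ot b')=(aa'_{R_1}\ot c_{R_2}c')\ot b_{R_1}b'_{R_2}.
\]
These match under the obvious bijection $\phi:a\ot b\ot c\mapsto(a\ot c)\ot b$, so $\phi$ is the desired isomorphism. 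The main obstacle is purely notational: tracking two simultaneous Sweedler-indexed applications of $R$ (or $Q$) inside (\ref{lrhom3})--(\ref{lrhom6}) and confirming that each identity collapses to a single axiom for the underlying Hom-twisting map, with the uninvolved algebra acting as a spectator.
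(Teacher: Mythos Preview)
Your proposal is correct and follows essentially the same route as the paper's own proof: verify (\ref{lrhom1})--(\ref{lrhom6}) by reducing to the Hom-twisting axioms for $R_1$ and $R_2$ with the uninvolved factor as a spectator, note that (\ref{lrhom7}) holds trivially, reduce (\ref{lrhom8}) to the braid relation specialized to $R_3=$ flip, and then compare the two explicit multiplication formulae under the bijection $a\ot b\ot c\mapsto (a\ot c)\ot b$. The paper carries out exactly this outline, with the same identification of (\ref{lrhom8}) with $a_{R_1}\ot (b_{R_1})_{R_2}\ot c_{R_2}=a_{R_1}\ot (b_{R_2})_{R_1}\ot c_{R_2}$.
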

\begin{proof}
One has to prove the relations (\ref{lrhom1})-(\ref{lrhom8}) for $R$ and $Q$. The relations 
(\ref{lrhom1})-(\ref{lrhom6}) are easy consequences of the fact that $R_1$ and $R_2$ are 
Hom-twisting maps, (\ref{lrhom7}) is trivially satisfied while (\ref{lrhom8}) is a consequence of the 
braid relation, which in our situation ($R_3$ is the flip) boils down to
$a_{R_1}\ot (b_{R_1})_{R_2}\ot c_{R_2}=a_{R_1}\ot (b_{R_2})_{R_1}\ot c_{R_2}$, 
for all $a\in A$, $b\in B$, $c\in C$. So indeed $(A\ot C)\;_Q\ot _RB$ is a Hom-L-R-twisted tensor product, 
with multiplication 
\begin{eqnarray*}
&&((a\ot c)\ot b)((a'\ot c')\ot b')=(aa'_{R_1}\ot c_{R_2}c')\ot b_{R_1}b'_{R_2}. 
\end{eqnarray*}
Again because $R_3$ is the flip, the multiplication in the iterated Hom-twisted tensor product 
$A\ot _{R_1}B\ot _{R_2}C$ is given by 
$(a\ot b\ot c)(a'\ot b'\ot c')=aa'_{R_1}\ot b_{R_1}b'_{R_2}\ot c_{R_2}c'$, 
so obviously we have $A\ot _{R_1}B\ot _{R_2}C\simeq (A\ot C)\;_Q\ot _RB$.
\end{proof}
\section{Hom-L-R-smash product}
\setcounter{equation}{0}
\begin{definition}\label{hombimodul}
Let $(A, \mu _A, \alpha _A)$ be a Hom-associative algebra, $M$ a linear space and 
$\alpha _M:M\rightarrow M$ a linear map. Assume that $(M, \alpha _M)$ is both a left and a right 
$A$-module (with actions denoted by $A\ot M\rightarrow M$, $a\ot m\mapsto a\cdot m$ and $M\ot A
\rightarrow M$, $m\ot a\mapsto m\cdot a$). We call $(M, \alpha _M)$ an {\em $A$-bimodule} if 
the following condition is satisfied, for all $a, a'\in A$, $m\in M$:
\begin{eqnarray}
&&\alpha _A(a)\cdot (m\cdot a')=(a\cdot m)\cdot \alpha _A(a').
\label{hombimodule}
\end{eqnarray} 
\end{definition}
\begin{remark}
Obviously, $(A, \alpha _A)$ is an $A$-bimodule. 
\end{remark}

We prove now that this is indeed the ''appropriate'' concept of bimodule for the class of Hom-associative 
algebras. Recall first from \cite{schafer} the following concept. Let $\cal C$ be a  
class of (not necessarily associative) algebras, $A\in \cal C$ and $M$ a 
linear space with two linear actions $a\otimes m\mapsto a\cdot m$ 
and $m\otimes a \mapsto m\cdot a$ of $A$ on $M$. On the 
direct sum $A\oplus M$ one can introduce an algebra structure (called the 
{\it semidirect sum} or {\it split null extension}) by defining a 
multiplication in $A\oplus M$ by 
\begin{eqnarray*}
(a,m)(a',m')=(aa', m\cdot a'+a\cdot m'), 
\end{eqnarray*}
for all $a, a'\in A$ and $m, m'\in M$. Then, if $A\oplus M$ with this 
algebra structure is in $\cal C$, we say that $M$ is an $A$-bimodule with 
respect to $\cal C$. If $\cal C$ is the class of all  
associative algebras or of all Lie algebras, one obtains the usual concepts 
of bimodule for these types of algebras. We have then the following result:
\begin{proposition}
Let $(A, \mu _A, \alpha _A)$ be a Hom-associative algebra, $M$ a linear space, 
$\alpha _M:M\rightarrow M$ a linear map and two linear actions $a\otimes m\mapsto a\cdot m$ 
and $m\otimes a \mapsto m\cdot a$ of $A$ on $M$. Then the split null extension $B=A\oplus M$ 
is a Hom-associative algebra with structure map $\alpha _B$ defined by $\alpha _B((a, m))=
(\alpha _A(a), \alpha _M(m))$ if and only if $(M, \alpha _M)$ is an $A$-bimodule as in Definition 
\ref{hombimodul}.
\end{proposition}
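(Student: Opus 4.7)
The argument is a direct expansion of the two Hom-associative algebra axioms on $B = A\oplus M$ into their $A$-component and $M$-component, and a reading-off of the five scalar conditions that together say $(M,\alpha _M)$ is a Hom-bimodule in the sense of Definition~\ref{hombimodul}. I would handle multiplicativity of $\alpha _B$ first, and Hom-associativity of the product on $B$ second; in each case the $A$-component is automatic from the assumptions on $A$, and the $M$-component decouples by bilinearity of the two actions together with the direct-sum structure of $A\oplus M$.

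For multiplicativity of $\alpha _B$, one compares
\[
\alpha _B\bigl((a,m)(a',m')\bigr) = \bigl(\alpha _A(aa'),\,\alpha _M(m\cdot a' + a\cdot m')\bigr)
\]
with
\[
\alpha _B((a,m))\,\alpha _B((a',m')) = \bigl(\alpha _A(a)\alpha _A(a'),\,\alpha _M(m)\cdot \alpha _A(a') + \alpha _A(a)\cdot \alpha _M(m')\bigr).
\]
The $A$-component matches by multiplicativity of $\alpha _A$, while setting first $m=0$ and then $m'=0$ in the $M$-component isolates the two identities $\alpha _M(a\cdot m') = \alpha _A(a)\cdot \alpha _M(m')$ and $\alpha _M(m\cdot a') = \alpha _M(m)\cdot \alpha _A(a')$, i.e.\ exactly the action-multiplicativity halves (\ref{hommod1}) and (\ref{righthommod1}) of the left and right Hom-module axioms.

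For Hom-associativity I would expand both sides of
\[
\alpha _B((a,m))\bigl[(a',m')(a'',m'')\bigr] = \bigl[(a,m)(a',m')\bigr]\alpha _B((a'',m''))
\]
using the definitions of $\alpha _B$ and of the product on $B$. The $A$-component becomes $\alpha _A(a)(a'a'') = (aa')\alpha _A(a'')$, which is automatic. The $M$-component consists of three summands on each side, each summand linear in exactly one of $m$, $m'$, $m''$; setting the other two to zero in turn gives the three independent identities
\[
\alpha _M(m)\cdot(a'a'') = (m\cdot a')\cdot \alpha _A(a''),\qquad \alpha _A(a)\cdot(a'\cdot m'') = (aa')\cdot \alpha _M(m''),
\]
\[
\alpha _A(a)\cdot(m'\cdot a'') = (a\cdot m')\cdot \alpha _A(a''),
\]
which are respectively (\ref{righthommod2}), (\ref{hommod2}) and (\ref{hombimodule}). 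Conversely, if all five axioms are assumed to hold, the above computations reassemble both $B$-axioms by linearity, giving the ``if'' direction.

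There is no real obstacle: the entire argument is bookkeeping. The only point deserving care is the claim that the three $M$-component summands can genuinely be isolated from one another in the ``only if'' direction, but this is immediate from the bilinearity of the left and right actions and the fact that $A\oplus M$ is a direct sum, so that triples $((a,m),(a',m'),(a'',m''))$ with two of $m, m', m''$ equal to zero separate the identities cleanly.
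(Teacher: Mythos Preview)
Your proposal is correct and follows essentially the same route as the paper: both expand the multiplicativity and Hom-associativity conditions on $B$ into their $A$- and $M$-components, observe that the $A$-component is automatic, and then specialize by setting appropriate combinations of $m, m', m''$ to zero to isolate the five bimodule axioms (\ref{hommod1}), (\ref{hommod2}), (\ref{righthommod1}), (\ref{righthommod2}), (\ref{hombimodule}). The only cosmetic difference is that you treat multiplicativity of $\alpha_B$ before Hom-associativity, whereas the paper writes out the Hom-associativity expansion first; the substance is identical.
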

\begin{proof}
It is easy to see that 
\begin{eqnarray*}
&&\alpha _B((a, m))[(a', m')(a'', m'')]=(\alpha _A(a)(a'a''), \alpha _M(m)\cdot (a'a'')+
\alpha _A(a)\cdot (m'\cdot a'')\\
&&\;\;\;\;\;\;\;\;\;\;\;\;\;\;\;\;\;\;\;\;\;\;\;\;\;\;\;\;\;\;\;\;\;\;\;\;\;\;\;\;\;\;\;\;\;\;\;\;\;\;\;\;\;
+\alpha _A(a)\cdot (a'\cdot m'')), \\
&&[(a, m)(a', m')]\alpha _B((a'', m''))=((aa')\alpha _A(a''), (m\cdot a')\cdot \alpha _A(a'')+
(a\cdot m')\cdot \alpha _A(a'')\\
&&\;\;\;\;\;\;\;\;\;\;\;\;\;\;\;\;\;\;\;\;\;\;\;\;\;\;\;\;\;\;\;\;\;\;\;\;\;\;\;\;\;\;\;\;\;\;\;\;\;\;\;\;\;
+(aa')\cdot \alpha _M(m'')),
\end{eqnarray*}
so the multiplication on $B$ is Hom-associative if and only if 
$$\alpha _M(m)\cdot (a'a'')+\alpha _A(a)\cdot (m'\cdot a'')+\alpha _A(a)\cdot (a'\cdot m'')=
(m\cdot a')\cdot \alpha _A(a'')+(a\cdot m')\cdot \alpha _A(a'')+(aa')\cdot \alpha _M(m'')$$
for all $a, a', a''\in A$ and $m, m', m''\in M$. Also, clearly $\alpha _B$ is multiplicative 
if and only if 
$$\alpha _M(m\cdot a')+\alpha _M(a\cdot m')=\alpha _M(m)\cdot \alpha _A(a')+
\alpha _A(a)\cdot \alpha _M(m')$$
for all $a, a'\in A$ and $m, m'\in M$. It is then obvious that if $(M, \alpha _M)$ is an $A$-bimodule then 
$B$ is Hom-associative. Conversely, assuming the two relations above, we take $m'=m''=0$, then 
$m=m'=0$, then $m=m''=0$ in the first relation and $m=0$, then $m'=0$ in the second relation and we 
obtain the five relations saying that $(M, \alpha _M)$ is an $A$-bimodule.
\end{proof}

Similarly to Theorem 4.5 in \cite{homquantum3}, one can prove the following result:
\begin{proposition}
Let $(A, \mu _A)$ be an associative algebra, $\alpha _A:A\rightarrow A$ an 
algebra endomorphism, $M$ an $A$-bimodule in the usual sense with actions $A\ot M\rightarrow M$, $a\ot m
\mapsto a\cdot m$ and $M\ot A\rightarrow M$, $m\ot a\mapsto m\cdot a$, 
and $\alpha _M:M\rightarrow M$ a linear map satisfying the conditions 
$\alpha _M(a\cdot m)=\alpha _A(a)\cdot \alpha _M(m)$ and $\alpha _M(m\cdot a)=
\alpha _M(m)\cdot \alpha _A(a)$  
for all $a\in A$, $m\in M$. 
Then $(M, \alpha _M)$ becomes a bimodule over the Hom-associative algebra 
$A_{\alpha _A}$, with actions  $A_{\alpha _A}\ot M\rightarrow M$, $a\ot m
\mapsto a\triangleright m:=\alpha _M(a\cdot m)=\alpha _A(a)\cdot \alpha _M(m)$ and 
$M\ot A_{\alpha _A}\rightarrow M$, $m\ot a\mapsto m\triangleleft a:=\alpha _M(m\cdot a)=
\alpha _M(m)\cdot \alpha _A(a)$. 
\end{proposition}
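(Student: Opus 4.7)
The plan is to verify the three conditions that constitute the definition of an $A_{\alpha_A}$-bimodule for $(M,\alpha_M)$: the left Hom-module axioms \eqref{hommod1}--\eqref{hommod2} for $\triangleright$, the right Hom-module axioms \eqref{righthommod1}--\eqref{righthommod2} for $\triangleleft$, and the bimodule compatibility \eqref{hombimodule}. In each case the check is a direct computation that unfolds the definitions $a\triangleright m=\alpha_A(a)\cdot \alpha_M(m)$ and $m\triangleleft a=\alpha_M(m)\cdot \alpha_A(a)$, pushes $\alpha_A$ and $\alpha_M$ through the actions using the given intertwining hypotheses $\alpha_M(a\cd m)=\alpha_A(a)\cd \alpha_M(m)$ and $\alpha_M(m\cd a)=\alpha_M(m)\cd \alpha_A(a)$, and then invokes the classical bimodule identities for $M$ together with the fact that $\alpha_A$ is an algebra endomorphism.

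First I would verify \eqref{hommod1}: expanding both sides yields $\alpha_M(a\triangleright m)=\alpha_A^2(a)\cd \alpha_M^2(m)=\alpha_A(a)\triangleright \alpha_M(m)$, using the intertwining relation twice. Then for \eqref{hommod2}, I expand
\[
\alpha_A(a)\triangleright(a'\triangleright m)=\alpha_A^2(a)\cd \bigl(\alpha_A^2(a')\cd \alpha_M^2(m)\bigr),
\]
apply classical associativity of the left action to pull the product across, and use multiplicativity of $\alpha_A$ to recognize this as $\alpha_A^2(aa')\cd \alpha_M^2(m)$. Since the product in $A_{\alpha_A}$ is $\mu_{\alpha_A}(a\ot a')=\alpha_A(aa')$, the right-hand side $(a\mu_{\alpha_A}a')\triangleright \alpha_M(m)=\alpha_A(aa')\triangleright \alpha_M(m)$ expands to the same expression. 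The right-module axioms \eqref{righthommod1} and \eqref{righthommod2} follow by the symmetric computation.

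For the bimodule compatibility \eqref{hombimodule}, I compute
\[
\alpha_A(a)\triangleright(m\triangleleft a')=\alpha_A^2(a)\cd \bigl(\alpha_M^2(m)\cd \alpha_A^2(a')\bigr),
\]
and apply the classical bimodule identity $a\cd(m\cd a')=(a\cd m)\cd a'$ to rewrite this as $\bigl(\alpha_A^2(a)\cd \alpha_M^2(m)\bigr)\cd \alpha_A^2(a')$. The other side $(a\triangleright m)\triangleleft \alpha_A(a')$ expands to the same expression by the same procedure, giving equality.

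There is no genuine obstacle: the whole argument is a bookkeeping exercise in the spirit of Yau's twisting principle, and proceeds exactly parallel to the one-sided statements in Propositions \ref{deformmodalg} and \ref{rightdefmodalg} combined with a single associativity application for the two-sided compatibility. The only point that requires mild care is keeping track of the two equivalent expressions $\alpha_M(a\cd m)$ and $\alpha_A(a)\cd \alpha_M(m)$ (and their right-action analogues) throughout, so as to recognize when the classical bimodule axioms for $M$ apply and when the multiplicativity of $\alpha_A$ converts products in $A$ into products in $A_{\alpha_A}$.
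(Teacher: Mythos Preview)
Your proposal is correct and is essentially the argument the paper has in mind: the paper does not spell out a proof but simply remarks that the result follows ``similarly to Theorem 4.5 in \cite{homquantum3}'', i.e.\ by the same direct verification of the Hom-module and Hom-bimodule axioms via Yau's twisting principle that you carry out. Your computations for \eqref{hommod1}--\eqref{hommod2}, their right-sided analogues, and \eqref{hombimodule} are exactly what is needed, and your closing remark about Propositions \ref{deformmodalg} and \ref{rightdefmodalg} correctly identifies the one-sided precedents.
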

\begin{definition}
Let $(H, \mu _H, \Delta _H, \alpha _H)$ be a Hom-bialgebra. An {\em $H$-bimodule Hom-algebra} is a 
Hom-associative algebra $(D, \mu _D, \alpha _D)$ that is both a left and a right $H$-module Hom-algebra 
and such that $(D, \alpha _D)$ is an $H$-bimodule. 
\end{definition}

We can introduce now the Hom-analogue of the L-R-smash product.
\begin{theorem}
Let $(H, \mu _H, \Delta _H, \alpha _H)$ be a Hom-bialgebra, $(D, \mu _D, \alpha _D)$ an $H$-bimodule 
Hom-algebra, with actions denoted by $H\ot D\rightarrow D$, $h\ot d\mapsto h\cdot d$ and $D\ot H
\rightarrow D$, $d\ot h\mapsto d\cdot h$, and assume that the structure maps $\alpha _D$ and 
$\alpha _H$ are both bijective. Define the linear maps 
\begin{eqnarray*}
&&R:H\ot D\rightarrow D\ot H, \;\;\;R(h\ot d)=\alpha _H^{-2}(h_1)\cdot \alpha _D^{-1}(d)\ot 
\alpha _H^{-1}(h_2), \\
&&Q:D\ot H\rightarrow D\ot H, \;\;\;Q(d\ot h)=\alpha _D^{-1}(d)\cdot \alpha _H^{-2}(h_2)\ot 
\alpha _H^{-1}(h_1),
\end{eqnarray*}
for all $d\in D$, $h\in H$. Then we have a Hom-L-R-twisted tensor product $D\;_Q\ot _RH$, which will 
be denoted by $D\nat H$ (we denote $d\ot h:=d\nat h$ for $d\in D$, $h\in H$) and called the 
{\em Hom-L-R-smash product} of $D$ and $H$. The structure map of $D\nat H$ is $\alpha _D\ot 
\alpha _H$ and its multiplication is
\begin{eqnarray*}
&&(d\nat h)(d'\nat h')=[\alpha _D^{-1}(d)\cdot \alpha _H^{-2}(h'_2)][\alpha _H^{-2}(h_1)\cdot 
\alpha _D^{-1}(d')]\nat \alpha _H^{-1}(h_2h'_1).
\end{eqnarray*}
\end{theorem}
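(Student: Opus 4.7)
The plan is to apply Proposition \ref{homlrttp} directly, so the task reduces to verifying the eight compatibility conditions (\ref{lrhom1})--(\ref{lrhom8}) for the specific maps $R$ and $Q$ defined in the statement. I would organize the verification in four groups according to which structural axioms are actually used, since each group isolates one hypothesis on $(D,\alpha_D)$ as an $H$-bimodule Hom-algebra.

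First, I would check (\ref{lrhom1}) and (\ref{lrhom2}). Both reduce to the twin facts that $\alpha_D(h\cdot d)=\alpha_H(h)\cdot \alpha_D(d)$ and $\alpha_D(d\cdot h)=\alpha_D(d)\cdot \alpha_H(h)$ (the module compatibility with structure maps built into the axioms (\ref{hommod1}) and (\ref{righthommod1})), combined with the comultiplicativity identity $\Delta_H\circ \alpha_H=(\alpha_H\ot \alpha_H)\circ \Delta_H$ from (\ref{hombia3}). Since $\alpha_H$ is bijective, the same identities hold with $\alpha_H^{-1}$ in place of $\alpha_H$, which is what one needs for the negative powers appearing in $R$ and $Q$. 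Next, I would verify (\ref{lrhom3}) and (\ref{lrhom4}) by unfolding both sides and invoking the left $H$-module Hom-algebra axiom (\ref{modalgcompat}), together with Hom-associativity in $H$; the dual pair (\ref{lrhom5}) and (\ref{lrhom6}) follows in the same way from the right $H$-module Hom-algebra axiom.

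The two remaining conditions are the interesting ones. For (\ref{lrhom7}), direct computation gives
\[
(d_R)_Q=\bigl(\alpha_H^{-3}(h_1)\cdot \alpha_D^{-2}(d)\bigr)\cdot \alpha_H^{-2}(h'_2),\qquad
(d_Q)_R=\alpha_H^{-2}(h_1)\cdot \bigl(\alpha_D^{-2}(d)\cdot \alpha_H^{-3}(h'_2)\bigr),
\]
and these are equal precisely by the Hom-bimodule axiom (\ref{hombimodule}) applied to $\alpha_H^{-3}(h_1), \alpha_D^{-2}(d), \alpha_H^{-3}(h'_2)$ after rewriting $\alpha_H^{-2}=\alpha_H\circ \alpha_H^{-3}$. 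For (\ref{lrhom8}), one reduces to checking an equality in $D\ot H\ot D$ whose $H$-components read $\alpha_H^{-2}(h_1)\ot \alpha_H^{-2}(h_{2,1})\ot \alpha_H^{-3}(h_{2,2})$ versus $\alpha_H^{-3}(h_{1,1})\ot \alpha_H^{-2}(h_{1,2})\ot \alpha_H^{-2}(h_2)$; these agree by Hom-coassociativity (\ref{hombia1}) after applying $\alpha_H^{-3}\ot \alpha_H^{-2}\ot \alpha_H^{-3}$ to both sides of $\Delta_H(h_1)\ot \alpha_H(h_2)=\alpha_H(h_1)\ot \Delta_H(h_2)$, and the $D$-components then match because they depend on the $H$-entries only through actions.

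With all eight conditions verified, Proposition \ref{homlrttp} yields the Hom-L-R-twisted tensor product $D\;_Q\ot_R H$ with structure map $\alpha_D\ot \alpha_H$, and inserting the formulas for $R$ and $Q$ into the general multiplication $(a\ot b)(a'\ot b')=a_Q a'_R\ot b_R b'_Q$ produces exactly the displayed formula for $D\nat H$. The main technical point to be careful about is the bookkeeping of $\alpha_H^{-1}$-exponents throughout; checking (\ref{lrhom7}) is the only place where the full bimodule hypothesis (rather than one-sided module Hom-algebra axioms) is genuinely needed, and checking (\ref{lrhom8}) is the only place Hom-coassociativity enters, so these are the two calculations I would present in most detail.
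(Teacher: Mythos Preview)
Your proposal is correct and follows essentially the same route as the paper. The only difference is that the paper disposes of (\ref{lrhom1})--(\ref{lrhom6}) in one line by observing that $R$ is exactly the Hom-twisting map of the smash product $D\#H$ from \cite{mp2} (and that $Q$ is treated analogously there), whereas you sketch the direct verification from the module Hom-algebra axioms; the detailed work on (\ref{lrhom7}) via (\ref{hombimodule}) and on (\ref{lrhom8}) via Hom-coassociativity (\ref{hombia1}) matches the paper's computation step for step.
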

\begin{proof}
Note first that $R$ is exactly the Hom-twisting map defining the Hom-smash product $D\# H$, so 
$R$ satisfies the conditions (\ref{lrhom1}), (\ref{lrhom3}), (\ref{lrhom4}). With a proof similar 
to the one in \cite{mp2}, one can prove that the map $Q$ satisfies the conditions 
(\ref{lrhom2}), (\ref{lrhom5}), (\ref{lrhom6}).\\
\underline{Proof of  (\ref{lrhom7})}:
\begin{eqnarray*}
h_R\ot (d_R)_Q\ot h'_Q&=&\alpha _H^{-1}(h_2)\ot (\alpha _H^{-2}(h_1)\cdot \alpha _D^{-1}(d))_Q\ot h'_Q\\
&=&\alpha _H^{-1}(h_2)\ot \alpha _D^{-1}(\alpha _H^{-2}(h_1)
\cdot \alpha _D^{-1}(d))\cdot \alpha _H^{-2}(h'_2)\ot \alpha _H^{-1}(h'_1)\\
&\overset{(\ref{hommod1})}{=}&\alpha _H^{-1}(h_2)\ot (\alpha _H^{-3}(h_1)
\cdot \alpha _D^{-2}(d))\cdot \alpha _H^{-2}(h'_2)\ot \alpha _H^{-1}(h'_1)\\
&\overset{(\ref{hombimodule})}{=}&\alpha _H^{-1}(h_2)\ot \alpha _H^{-2}(h_1)
\cdot (\alpha _D^{-2}(d)\cdot \alpha _H^{-3}(h'_2))\ot \alpha _H^{-1}(h'_1)\\
&\overset{(\ref{righthommod1})}{=}&\alpha _H^{-1}(h_2)\ot \alpha _H^{-2}(h_1)
\cdot (\alpha _D^{-1}(\alpha _D^{-1}(d)\cdot \alpha _H^{-2}(h'_2)))\ot \alpha _H^{-1}(h'_1)\\
&=&h_R\ot (\alpha _D^{-1}(d)\cdot \alpha _H^{-2}(h'_2))_R\ot \alpha _H^{-1}(h'_1)\\
&=&h_R\ot (d_Q)_R\ot h'_Q.
\end{eqnarray*}
\underline{Proof of  (\ref{lrhom8})}:
\begin{eqnarray*}
d_R\ot (h_R)_Q\ot d'_Q&=&\alpha _H^{-2}(h_1)\cdot \alpha _D^{-1}(d)\ot 
\alpha _H^{-1}(h_2)_Q\ot d'_Q\\
&=&\alpha _H^{-2}(h_1)\cdot \alpha _D^{-1}(d)\ot 
\alpha _H^{-1}(\alpha _H^{-1}(h_2)_1)\ot \alpha _D^{-1}(d')\cdot \alpha _H^{-2}(\alpha _H^{-1}(h_2)_2)\\
&\overset{(\ref{hombia3})}{=}&\alpha _H^{-2}(h_1)\cdot \alpha _D^{-1}(d)\ot 
\alpha _H^{-2}((h_2)_1)\ot \alpha _D^{-1}(d')\cdot \alpha _H^{-3}((h_2)_2)\\
&\overset{(\ref{hombia1})}{=}&\alpha _H^{-3}((h_1)_1)\cdot \alpha _D^{-1}(d)\ot 
\alpha _H^{-2}((h_1)_2)\ot \alpha _D^{-1}(d')\cdot \alpha _H^{-2}(h_2)\\
&\overset{(\ref{hombia3})}{=}&\alpha _H^{-2}(\alpha _H^{-1}(h_1)_1)\cdot \alpha _D^{-1}(d)\ot 
\alpha _H^{-1}(\alpha _H^{-1}(h_1)_2)\ot \alpha _D^{-1}(d')\cdot \alpha _H^{-2}(h_2)\\
&=&d_R\ot \alpha _H^{-1}(h_1)_R \ot \alpha _D^{-1}(d')\cdot \alpha _H^{-2}(h_2)\\
&=&d_R\ot (h_Q)_R\ot d'_Q,
\end{eqnarray*}
finishing the proof.
\end{proof}

As a consequence of Proposition \ref{deformLRttp}, we immediately obtain the following result:
\begin{proposition} \label{twistinglrsmash}
Let $(H, \mu _H, \Delta _H)$ be a bialgebra and $(D, \mu _D)$ an $H$-bimodule algebra in the usual sense, 
with actions denoted by $H\ot D\rightarrow D$, $h\ot d\mapsto h\cdot d$ and $D\ot H
\rightarrow D$, $d\ot h\mapsto d\cdot h$. Let $\alpha _H:H\rightarrow H$ be a bialgebra endomorphism 
and $\alpha _D:D\rightarrow D$ an algebra endomorphism such that 
$\alpha _D(h\cdot d)=\alpha _H(h)\cdot \alpha _D(d)$ and $\alpha _D(d\cdot h)=\alpha _D(d)\cdot \alpha _H(h)$ 
for all $d\in D$, $h\in H$. If we consider the Hom-bialgebra $H_{\alpha _H}=(H, \alpha _H\circ \mu _H, 
\Delta _H\circ \alpha _H, \alpha _H)$ and the Hom-associative algebra $D_{\alpha _D}=(D, \alpha _D\circ \mu _D, 
\alpha _D)$, then $D_{\alpha _D}$ is an $H_{\alpha _H}$-bimodule Hom-algebra with actions 
$H_{\alpha _H}\ot D_{\alpha _D}\rightarrow D_{\alpha _D}$, $h\ot d\mapsto h\triangleright d:=
\alpha _D(h\cdot d)=\alpha _H(h)\cdot \alpha _D(d)$ and $D_{\alpha _D}\ot H_{\alpha _H}\rightarrow 
D_{\alpha _D}$, $d\ot h\mapsto d\triangleleft h:=\alpha _D(d\cdot h)=\alpha _D(d)\cdot \alpha _H(h)$. 
If we assume that moreover the maps $\alpha _H$ and $\alpha _D$ are bijective, if we denote by $D\nat H$ 
the L-R-smash product between $D$ and $H$, then $\alpha _D\ot \alpha _H$ is an algebra endomorphism 
of $D\nat H$ and the Hom-associative algebras $(D\nat H)_{\alpha _D\ot \alpha _H}$ and 
$D_{\alpha _D}\nat H_{\alpha _H}$ coincide. 
\end{proposition}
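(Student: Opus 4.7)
The plan is to reduce everything to Proposition~\ref{deformLRttp}, after first identifying the classical L-R-smash product $D\nat H$ of Example~1.2 as the starting data. First I would dispose of the claim that $D_{\alpha_D}$ is an $H_{\alpha_H}$-bimodule Hom-algebra with the stated twisted actions $h\tr d=\alpha_H(h)\cdot \alpha_D(d)$ and $d\tl h=\alpha_D(d)\cdot \alpha_H(h)$: the left and right module Hom-algebra structures are supplied directly by Propositions~\ref{deformmodalg} and~\ref{rightdefmodalg}, so only the bimodule axiom (\ref{hombimodule}) has to be checked for the twisted actions. Both sides of it expand routinely to the identity $(\alpha_H^2(h)\cdot \alpha_D^2(d))\cdot \alpha_H^2(h')=\alpha_H^2(h)\cdot (\alpha_D^2(d)\cdot \alpha_H^2(h'))$, which is the ordinary bimodule associativity in $D$ over $H$.

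Next I would apply Proposition~\ref{deformLRttp} to the classical data $A=D$, $B=H$ with $R(h\ot d)=h_1\cdot d\ot h_2$ and $Q(d\ot h)=d\cdot h_2\ot h_1$, so that by Example~1.2 the classical L-R-twisted tensor product $D\;_Q\ot _R H$ is precisely $D\nat H$. A one-line verification using that $\alpha_H$ is a coalgebra map and that $\alpha_D,\alpha_H$ jointly intertwine each of the two actions shows that $(\alpha_D\ot \alpha_H)\circ R=R\circ (\alpha_H\ot \alpha_D)$ and $(\alpha_D\ot \alpha_H)\circ Q=Q\circ (\alpha_D\ot \alpha_H)$. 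Proposition~\ref{deformLRttp} then delivers simultaneously that $\alpha_D\ot \alpha_H$ is an algebra endomorphism of $D\nat H$ and that $(D\nat H)_{\alpha_D\ot \alpha_H}$ coincides, as a Hom-associative algebra, with the Hom-L-R-twisted tensor product $D_{\alpha_D}\;_Q\ot _R H_{\alpha_H}$ built from the \emph{same} classical $R$ and $Q$.

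The last step, which I expect to be the main (though still routine) obstacle, is to recognize this latter algebra as the Hom-L-R-smash product $D_{\alpha_D}\nat H_{\alpha_H}$ defined in the preceding Theorem, whose structure maps $\tilde R,\tilde Q$ carry several occurrences of $\alpha_H^{-1}$, $\alpha_D^{-1}$ and involve the twisted actions $\tr,\tl$. The key bookkeeping is that on $H_{\alpha_H}$ the Sweedler symbol $h_1\ot h_2$ stands for $\Delta_H(\alpha_H(h))=\alpha_H(h^{(1)})\ot \alpha_H(h^{(2)})$ in the original coproduct. Substituting this into $\alpha_H^{-2}(h_1)\tr \alpha_D^{-1}(d)\ot \alpha_H^{-1}(h_2)$ and using $\alpha_H^{-2}(h_1)\tr \alpha_D^{-1}(d)=\alpha_D(\alpha_H^{-2}(h_1)\cdot \alpha_D^{-1}(d))=\alpha_H^{-1}(h_1)\cdot d$, one sees that $\tilde R$ collapses precisely to $h\ot d\mapsto h^{(1)}\cdot d\ot h^{(2)}=R(h\ot d)$, and the analogous check for $\tilde Q$ versus $Q$ is entirely parallel. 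Hence the two Hom-associative algebras $D_{\alpha_D}\;_Q\ot _R H_{\alpha_H}$ and $D_{\alpha_D}\nat H_{\alpha_H}$ have the same multiplication (and the same structure map $\alpha_D\ot \alpha_H$), so they coincide, completing the proof.
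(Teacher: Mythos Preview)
Your proposal is correct and follows exactly the route the paper intends: the paper states the proposition ``as a consequence of Proposition~\ref{deformLRttp}'' with no further details, and you are simply filling in the bookkeeping the authors omit---verifying the bimodule axiom for $\tr,\tl$, the intertwining conditions for $R$ and $Q$, and the identification $\tilde R=R$, $\tilde Q=Q$ that makes $D_{\alpha_D}\;_Q\ot_R H_{\alpha_H}$ coincide with $D_{\alpha_D}\nat H_{\alpha_H}$. Nothing is missing.
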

\begin{example} \label{Hstar}
Let $(H, \mu _H, \Delta _H, \alpha _H)$ be a Hom-bialgebra such that $\alpha _H$ is bijective. The vector space 
$H^*$ becomes a Hom-associative algebra with multiplication and structure map defined by: 
\begin{eqnarray*}
&&(f\bullet g)(h)=f(\alpha _H^{-2}(h_1))g(\alpha _H^{-2}(h_2)), \\
&&\beta :H^*\rightarrow H^*, \;\;\;\beta (f)(h)=f(\alpha _H^{-1}(h)), 
\end{eqnarray*}
for all $f, g\in H^*$ and $h\in H$. Then for any $p, q\in \mathbb{Z}$, this Hom-associative algebra $H^*$ 
can be organized as an $H$-bimodule Hom-algebra (denoted by $H^*_{p, q}$) with actions defined as follows:
\begin{eqnarray*}
&&\rightharpoonup :H\ot H^*\rightarrow H^*, \;\;\;
(h\rightharpoonup f)(h')=f(\alpha _H^{-2}(h')\alpha _H^p(h)), \\
&&\leftharpoonup :H^*\ot H\rightarrow H^*, \;\;\;(f\leftharpoonup h)(h')=f(\alpha _H^q(h)
\alpha _H^{-2}(h')),
\end{eqnarray*}
for all $h, h'\in H$ and $f\in H^*$. Note that $\beta $ is bijective and $\beta ^{-1}=\alpha _H^*$, the transpose 
of $\alpha _H$. So, we can consider the Hom-L-R-smash product $H^*_{p, q}\nat H$, whose 
structure map is $\beta \ot \alpha _H$ and whose multiplication is 
\begin{eqnarray*}
(f\nat h)(f'\nat h')=[\alpha _H^*(f)\leftharpoonup \alpha _H^{-2}(h'_2)]\bullet 
[\alpha _H^{-2}(h_1)\rightharpoonup \alpha _H^*(f')]\nat \alpha _H^{-1}(h_2h'_1). 
\end{eqnarray*}
\end{example}
\begin{example}
Let $(H, \mu _H, \Delta _H, \alpha _H)$ be a Hom-bialgebra, $(A, \mu _A, \alpha _A)$ a left 
$H$-module Hom-algebra and 
$(C, \mu _C, \alpha _C)$ a right $H$-module Hom-algebra, with actions denoted by 
$H\ot A\rightarrow A$, $h\ot a\mapsto h\cdot a$ and 
$C\ot H\rightarrow C$, $c\ot h\mapsto c\cdot h$. Define $D:=A\ot C$ as the tensor product 
Hom-associative algebra. Define the linear maps 
\begin{eqnarray*}
&&H\ot (A\ot C)\rightarrow A\ot C, \;\;\;h\cdot (a\ot c)=h\cdot a\ot \alpha _C(c), \\
&&(A\ot C)\ot H\rightarrow A\ot C, \;\;\;(a\ot c)\cdot h=\alpha _A(a)\ot c\cdot h. 
\end{eqnarray*}
Then one can easily check that $A\ot C$ with these actions becomes an $H$-bimodule Hom-algebra. 

Assume that moreover the structure maps 
$\alpha _H$, $\alpha _A$, $\alpha _C$ are bijective, so we can consider the Hom-L-R-smash product 
$(A\ot C)\nat H$. Then, by writing down the formula for the multiplication in $(A\ot C)\nat H$, 
and then by applying Proposition \ref{mis}, we obtain that $(A\ot C)\nat H$ is isomorphic to the 
two-sided Hom-smash product $A\# H\# C$. 
\end{example}
\section{Hom-diagonal crossed product}
\setcounter{equation}{0}
\begin{definition}
(i) If $(A, \mu , \alpha )$ is a Hom-associative algebra, we say that $A$ is {\em unital} if 
there exists an element $1_A\in A$ such that 
\begin{eqnarray}
&&\alpha (1_A)=1_A, \label{unit1}\\
&&1_Aa=a1_A=\alpha (a), \;\;\;\forall \;a\in A. \label{unit2}
\end{eqnarray}
If $f:A\rightarrow B$ is a morphism of Hom-associative algebras, we say that f is {\em unital} if 
$f(1_A)=1_B$.\\
(ii) If $(C, \Delta, \alpha )$ is a Hom-coassociative coalgebra, we say that $C$ is {\em counital} 
if there exists a linear map $\varepsilon _C:C\rightarrow k$ such that 
\begin{eqnarray}
&&\varepsilon _C\circ \alpha =\varepsilon _C, \label{counit1}\\
&&\varepsilon _C(c_1)c_2=c_1\varepsilon _C(c_2)=\alpha (c), \;\;\;\forall \;c\in C. \label{counit2}
\end{eqnarray}
If $f:C\rightarrow D$ is a morphism of Hom-coassociative coalgebras, we say that f is {\em counital} if 
$\varepsilon _D\circ f=\varepsilon _C$. 
\end{definition}
\begin{definition} (\cite{ms3}, \cite{ms4}) 
A {\em Hom-Hopf algebra} $(H, \mu _H, \Delta _H, \alpha _H, 1_H, \varepsilon _H, S)$ is a 
Hom-bialgebra such that $(H, \mu _H, \alpha _H, 1_H)$ is a unital Hom-associative algebra, 
$(H, \Delta _H, \alpha _H, \varepsilon _H)$ is a counital Hom-coassociative coalgebra, and $S:H\rightarrow H$ 
is a linear map (called the {\em antipode}) such that 
\begin{eqnarray}
&&\Delta _H(1_H)=1_H\ot 1_H, \\
&&\varepsilon _H(hh')=\varepsilon _H(h)\varepsilon _H(h'), \;\;\;\forall \;h, h'\in H, \\
&&\varepsilon _H(1_H)=1, \\
&&S(h_1)h_2=h_1S(h_2)=\varepsilon _H(h)1_H, \;\;\;\forall \;h\in H, \label{ant}\\
&&S\circ \alpha _H=\alpha _H\circ S. \label{suplant}
\end{eqnarray}
\end{definition}
As consequences of the axioms, and assuming that $\alpha _H$ is bijective, 
we have the following relations satisfied for a Hom-Hopf algebra: 
\begin{eqnarray}
&&S(1_H)=1_H, \label{antunit}\\
&&\varepsilon _H\circ S=\varepsilon _H, \\
&&S(hh')=S(h')S(h), \;\;\;\forall \;h, h'\in H, \label{antialg}\\
&&\Delta _H(S(h))=S(h_2)\ot S(h_1), \;\;\;\forall \;h\in H. 
\end{eqnarray}
If $S$ is bijective, we have, as an immediate consequence of (\ref{ant}), (\ref{antialg}) and (\ref{antunit}):
\begin{eqnarray}
&&S^{-1}(h_2)h_1=h_2S^{-1}(h_1)=\varepsilon _H(h)1_H, \;\;\;\forall \;h\in H. \label{invant}
\end{eqnarray}
\begin{proposition}\label{bijQ}
Let $(H, \mu _H, \Delta _H, \alpha _H, 1_H, \varepsilon _H, S)$ be a Hom-Hopf algebra with bijective 
antipode and $(D, \mu _D, \alpha _D)$ an $H$-bimodule Hom-algebra, with actions 
$H\ot D\rightarrow D$, $h\ot d\mapsto h\cdot d$ and $D\ot H
\rightarrow D$, $d\ot h\mapsto d\cdot h$, such that $\alpha _D$ and 
$\alpha _H$ are both bijective and 
\begin{eqnarray}
&&1_H\cdot d=d\cdot 1_H=\alpha _D(d), \;\;\;\forall \;d\in D. \label{unitaction}
\end{eqnarray}
Then the map $Q:D\ot H\rightarrow D\ot H$, $Q(d\ot h)=\alpha _D^{-1}(d)\cdot \alpha _H^{-2}(h_2)\ot 
\alpha _H^{-1}(h_1)$ is bijective, with inverse $Q^{-1}:D\ot H\rightarrow D\ot H$, 
$Q^{-1}(d\ot h)=\alpha _D^{-1}(d)\cdot \alpha _H^{-2}(S^{-1}(h_2))\ot 
\alpha _H^{-1}(h_1)$.
\end{proposition}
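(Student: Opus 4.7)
The plan is to verify directly the two equalities $Q \circ Q^{-1} = \mathrm{id}_{D\ot H}$ and $Q^{-1}\circ Q = \mathrm{id}_{D\ot H}$. The two verifications are entirely symmetric, so I describe only the first in detail.

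First, expand $Q(Q^{-1}(d\ot h))$ by applying the definitions. Using multiplicativity of $\alpha_D^{-1}$ for the right action (from (\ref{righthommod1})) on the first tensorand, and comultiplicativity (\ref{hombia3}) to rewrite $\Delta(\alpha_H^{-1}(h_1))$ as $\alpha_H^{-1}(h_{11})\ot \alpha_H^{-1}(h_{12})$, one obtains
\[Q(Q^{-1}(d\ot h)) = \bigl(\alpha_D^{-2}(d) \cdot \alpha_H^{-3}(S^{-1}(h_2))\bigr)\cdot \alpha_H^{-3}(h_{12}) \ot \alpha_H^{-2}(h_{11}).\]
The next step is to use Hom-coassociativity to rearrange the Sweedler indices. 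From $(\Delta\ot \alpha_H)\circ \Delta = (\alpha_H\ot \Delta)\circ \Delta$, combined with (\ref{hombia3}), one derives the identity $h_{11}\ot h_{12}\ot h_2 = \alpha_H(h_1)\ot h_{21}\ot \alpha_H^{-1}(h_{22})$. Substituting this and using $S^{-1}\alpha_H^{-1} = \alpha_H^{-1}S^{-1}$ (a consequence of (\ref{suplant})), the expression becomes
\[\bigl(\alpha_D^{-2}(d) \cdot \alpha_H^{-4}(S^{-1}(h_{22}))\bigr)\cdot \alpha_H^{-3}(h_{21}) \ot \alpha_H^{-1}(h_1).\]

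Now apply Hom-associativity of the right action (\ref{righthommod2}) to merge the two adjacent actions into a single one; multiplicativity of $\alpha_H$ collapses the product in $H$ to $\alpha_H^{-4}(S^{-1}(h_{22})h_{21})$, which by the antipode identity (\ref{invant}) equals $\varepsilon_H(h_2)1_H$. The unital action hypothesis (\ref{unitaction}) together with $\alpha_H(1_H)=1_H$ reduces the first tensorand to $\varepsilon_H(h_2)d$, and the counit axiom (\ref{counit2}) gives $\varepsilon_H(h_2)\alpha_H^{-1}(h_1)=h$, yielding $Q(Q^{-1}(d\ot h))=d\ot h$.

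The opposite composition $Q^{-1}\circ Q$ follows the same template, the only change being the use of the other half of (\ref{invant}), namely $h_2S^{-1}(h_1)=\varepsilon_H(h)1_H$. The main technical obstacle is the bookkeeping of the $\alpha_H$-exponents: one must apply Hom-coassociativity in the correct direction to align the Sweedler components so that the antipode identity becomes applicable, and then apply Hom-associativity of the right action with the right $\alpha_H$-shift so that the two $H$-elements appearing inside the action sit at a common $\alpha_H$-level where multiplicativity of $\alpha_H$ can finally be used.
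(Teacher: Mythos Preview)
Your proof is correct and follows essentially the same approach as the paper's: expand the composition, use (\ref{righthommod1}) and (\ref{hombia3}) to normalize the $\alpha$-powers, apply Hom-coassociativity (\ref{hombia1}) to realign the Sweedler components, then use (\ref{righthommod2}) and (\ref{invant}) to collapse the action via the antipode, finishing with (\ref{unitaction}) and (\ref{counit2}). The only cosmetic difference is that the paper writes out $Q^{-1}\circ Q$ in detail while you do $Q\circ Q^{-1}$; the remaining composition is handled identically in both by an appeal to symmetry.
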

\begin{proof}
We check only that $Q^{-1}\circ Q=id$, the proof for $Q\circ Q^{-1}=id$ is similar and left to the reader. 
We compute:
\begin{eqnarray*}
(Q^{-1}\circ Q)(d\ot h)&=&
Q^{-1}(\alpha _D^{-1}(d)\cdot \alpha _H^{-2}(h_2)\ot 
\alpha _H^{-1}(h_1))\\
&=&\alpha _D^{-1}(\alpha _D^{-1}(d)\cdot \alpha _H^{-2}(h_2))\cdot \alpha _H^{-2}(
S^{-1}(\alpha _H^{-1}(h_1)_2))\ot \alpha _H^{-1}(\alpha _H^{-1}(h_1)_1)\\
&\overset{(\ref{hombia3}), \;(\ref{righthommod1})}{=}&
(\alpha _D^{-2}(d)\cdot \alpha _H^{-3}(h_2))\cdot \alpha _H^{-2}(
S^{-1}(\alpha _H^{-1}((h_1)_2)))\ot \alpha _H^{-2}((h_1)_1)\\
&\overset{(\ref{hombia1}), \;(\ref{suplant})}{=}&
(\alpha _D^{-2}(d)\cdot \alpha _H^{-4}((h_2)_2))\cdot \alpha _H^{-3}(
S^{-1}((h_2)_1))\ot \alpha _H^{-1}(h_1)\\
&\overset{(\ref{righthommod2})}{=}&\alpha _D^{-1}(d)\cdot (\alpha _H^{-4}((h_2)_2)\alpha _H^{-4}(
S^{-1}((h_2)_1)))\ot \alpha _H^{-1}(h_1)\\
&\overset{(\ref{invant})}{=}&\alpha _D^{-1}(d)\cdot (\alpha _H^{-4}(\varepsilon _H(h_2)1_H))
\ot \alpha _H^{-1}(h_1)\\
&\overset{(\ref{unit1})}{=}&\alpha _D^{-1}(d)\cdot 1_H\ot \alpha _H^{-1}(h_1\varepsilon _H(h_2))\\
&\overset{(\ref{unitaction}), \;(\ref{counit2})}{=}&d\ot h,
\end{eqnarray*}
finishing the proof.
\end{proof}

Assume now that we are in the hypotheses of Proposition \ref{bijQ} and consider the Hom-L-R-smash 
product $D\nat H=D\;_Q\ot _RH$. Since the map $Q$ is bijective, we can apply Proposition \ref{biject} 
and we obtain that the map $P:H\ot D\rightarrow D\ot H$, $P=Q^{-1}\circ R$ is a Hom-twisting map 
and we have an isomorphism of Hom-associative algebras $Q:D\ot _PH\simeq D\nat H$. 
\begin{definition}
This Hom-associative algebra $D\ot _PH$ will be denoted by $D\bowtie H$ and will be called the 
{\em Hom-diagonal crossed product} of $D$ and $H$. The map $P$ is defined by 
\begin{eqnarray*}
&&P(h\ot d)=(\alpha _H^{-3}(h_1)\cdot \alpha _D^{-2}(d))\cdot \alpha _H^{-3}(S^{-1}((h_2)_2))
\ot \alpha _H^{-2}((h_2)_1), 
\end{eqnarray*}
so the multiplication of $D\bowtie H$ is defined (denoting $d\ot h:=d\bowtie h$) by 
\begin{eqnarray*}
&&(d\bowtie h)(d'\bowtie h')=d[(\alpha _H^{-3}(h_1)\cdot \alpha _D^{-2}(d'))\cdot 
\alpha _H^{-3}(S^{-1}((h_2)_2))]
\bowtie \alpha _H^{-2}((h_2)_1)h'.
\end{eqnarray*}
\end{definition}

By a direct computation, one can check that the ''twisting principle'' holds also for diagonal crossed products, 
namely:
\begin{proposition}
Assume that we are in the hypotheses and notation of Proposition \ref{twistinglrsmash}, 
assuming moreover that $\alpha _H$ and $\alpha _D$ are bijective and $(H, \mu _H, \Delta _H, 1_H, 
\varepsilon _H)$ is a unital and counital Hopf algebra with bijective antipode $S$ and we have 
$\alpha _H(1_H)=1_H$, $\varepsilon _H\circ \alpha _H=\varepsilon _H$, $S\circ \alpha _H=\alpha _H\circ S$. 
Then $H_{\alpha _H}=(H, \alpha _H\circ \mu _H, 
\Delta _H\circ \alpha _H, \alpha _H, 1_H, \varepsilon _H, S)$ is a Hom-Hopf algebra, the 
$H_{\alpha _H}$-bimodule Hom-algebra $D_{\alpha _D}=(D, \alpha _D\circ \mu _D, \alpha _D)$ 
satisfies the hypotheses of Proposition \ref{bijQ}, the map $\alpha _D\ot \alpha _H$ is an 
algebra endomorphism of the diagonal crossed product $D\bowtie H$ and the Hom-associative 
algebras $(D\bowtie H)_{\alpha _D\ot \alpha _H}$ and $D_{\alpha _D}\bowtie H_{\alpha _H}$ coincide.
\end{proposition}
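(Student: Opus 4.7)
My plan is to dispatch (i) and (ii) by direct verification and then reduce the comparison of the two Hom-diagonal crossed products in (iii)--(iv) to the analogous comparison of the two Hom-L-R-smash products (already handled in Proposition~\ref{twistinglrsmash}) via the isomorphism $Q$ of Proposition~\ref{biject}.

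For (i), the Hom-bialgebra structure of $H_{\alpha_H}$ is given by Proposition~\ref{yautwisting}(iii); the unit identity $1_H\cdot_{\alpha_H}h=\alpha_H(h)$, the counit identity for $\Delta_{\alpha_H}$, and the antipode identity for $\Delta_{\alpha_H}$ each reduce to their classical counterparts using the assumptions $\alpha_H(1_H)=1_H$, $\varepsilon_H\circ\alpha_H=\varepsilon_H$, and $S\circ\alpha_H=\alpha_H\circ S$. For (ii), Proposition~\ref{twistinglrsmash} already furnishes the $H_{\alpha_H}$-bimodule Hom-algebra structure on $D_{\alpha_D}$ via $h\triangleright d=\alpha_D(h\cdot d)$ and $d\triangleleft h=\alpha_D(d\cdot h)$; the remaining hypothesis of Proposition~\ref{bijQ}, namely $1_H\triangleright d=d\triangleleft 1_H=\alpha_D(d)$, is immediate from these formulas, and bijectivity of $\alpha_H$, $\alpha_D$, and the antipode is given.

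For (iii) and (iv) the idea is to exploit the classical algebra isomorphism $Q:D\bowtie H\simeq D\nat H$, $Q(d\ot h)=d\cdot h_2\ot h_1$, from \cite{panvan}. The central observation is that this same linear map $Q$ also realises the Hom-level isomorphism $D_{\alpha_D}\bowtie H_{\alpha_H}\simeq D_{\alpha_D}\nat H_{\alpha_H}$ furnished by Proposition~\ref{biject}: unpacking the Hom-level formula using $\Delta_{\alpha_H}(h)=\alpha_H(h_1)\ot\alpha_H(h_2)$ together with the twisted right action $d\triangleleft h=\alpha_D(d)\cdot\alpha_H(h)$, the expression $\alpha_D^{-1}(d)\triangleleft\alpha_H^{-2}(\Delta_{\alpha_H}(h)_2)\ot\alpha_H^{-1}(\Delta_{\alpha_H}(h)_1)$ collapses directly to $d\cdot h_2\ot h_1$. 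A similarly short check using $\alpha_D(d\cdot h)=\alpha_D(d)\cdot\alpha_H(h)$ and the comultiplicativity of $\alpha_H$ shows that $Q$ commutes with $\alpha_D\ot\alpha_H$.

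With these two points in hand, (iii) follows: Proposition~\ref{twistinglrsmash} makes $\alpha_D\ot\alpha_H$ an algebra endomorphism of $D\nat H$, and the classical algebra isomorphism $Q$ intertwines this endomorphism with one of $D\bowtie H$. For (iv), applying Proposition~\ref{yautwisting}(i) to $Q$ produces a Hom-algebra isomorphism $Q:(D\bowtie H)_{\alpha_D\ot\alpha_H}\to(D\nat H)_{\alpha_D\ot\alpha_H}$; the target equals $D_{\alpha_D}\nat H_{\alpha_H}$ by Proposition~\ref{twistinglrsmash}, and Proposition~\ref{biject} yields a Hom-algebra iso $Q:D_{\alpha_D}\bowtie H_{\alpha_H}\to D_{\alpha_D}\nat H_{\alpha_H}$ given by the same linear map. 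Two Hom-multiplications on $D\ot H$ that pull back along one and the same bijection $Q$ to the same Hom-multiplication must themselves coincide, proving (iv). The main technical point of the whole argument is the coincidence of the classical and Hom versions of $Q$ in the previous paragraph; once this is verified everything else is formal, sidestepping the tedious bookkeeping with $\alpha_H^{\pm k}$ exponents that a direct comparison of the multiplication formulas would require.
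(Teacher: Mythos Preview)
Your proof is correct. The paper itself offers no argument beyond the sentence ``By a direct computation, one can check that the twisting principle holds also for diagonal crossed products'', so there is essentially nothing to compare against on the paper's side except the implicit suggestion that one should expand both multiplication formulas and match them.

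Your route is genuinely different and more conceptual. Instead of comparing the multiplications on $(D\bowtie H)_{\alpha_D\ot\alpha_H}$ and $D_{\alpha_D}\bowtie H_{\alpha_H}$ directly, you observe that the single linear map $Q(d\ot h)=d\cdot h_2\ot h_1$ simultaneously realises (a) the classical isomorphism $D\bowtie H\simeq D\nat H$ and (b) the Hom-level isomorphism $D_{\alpha_D}\bowtie H_{\alpha_H}\simeq D_{\alpha_D}\nat H_{\alpha_H}$ supplied by Proposition~\ref{biject}; the verification that the Hom-level $Q$ collapses to the classical one is the only real computation, and it is short. Since Proposition~\ref{twistinglrsmash} already identifies $(D\nat H)_{\alpha_D\ot\alpha_H}$ with $D_{\alpha_D}\nat H_{\alpha_H}$, pulling back along the common bijection $Q$ forces the two Hom-structures on $D\ot H$ to coincide. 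What your approach buys is that all the bookkeeping with powers of $\alpha_H$ and the antipode in the diagonal-crossed-product formula is absorbed into results already proved (Propositions~\ref{twistinglrsmash} and~\ref{biject}); what a direct computation would buy is independence from those auxiliary results, at the cost of a page of index-chasing. One small point: the functoriality of the Yau twist that you invoke (an algebra map commuting with the twisting endomorphisms becomes a morphism of the twisted Hom-algebras) is not stated explicitly in Proposition~\ref{yautwisting}, but it is an immediate one-line consequence, so this is not a gap.
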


We need to characterize (left) modules over a Hom-diagonal crossed product, and we obtain first a 
characterization of (left) modules over a Hom-twisted tensor product. We begin with some definitions. 
\begin{definition}
Let $A\ot _RB$ be a Hom-twisted tensor product of the unital Hom-associative algebras 
$(A, \mu _A, \alpha _A, 1_A)$ and $(B, \mu _B, \alpha _B, 1_B)$. We say that $R$ is a {\em unital} 
Hom-twisting map if, for all $a\in A$, $b\in B$, we have $R(1_B\ot a)=a\ot 1_B$ and 
$R(b\ot 1_A)=1_A\ot b$. If this is the case, then $A\ot _RB$ is unital with unit $1_A\ot 1_B$, 
the maps $A\rightarrow A\ot _RB$, $a\mapsto a\ot 1_B$ and $B\rightarrow A\ot _RB$, $b\mapsto 1_A\ot b$ 
are unital morphisms of Hom-associative algebras and for all $a\in A$, $b\in B$ we have 
\begin{eqnarray}
&&(a\ot 1_B)(1_A\ot b)=\alpha _A(a)\ot \alpha _B(b). \label{abtwist}
\end{eqnarray}
\end{definition}
\begin{remark}
Let $D\bowtie H=D\ot _PH$ be a Hom-diagonal crossed product such that $D$ is a unital Hom-associative algebra 
and $h\cdot 1_D=1_D\cdot h=\varepsilon _H(h)1_D$, for all $h\in H$. Then $P$ is a unital Hom-twisting map, 
so $D\bowtie H$ is unital with unit $1_D\bowtie 1_H$. 
\end{remark}
\begin{remark}
If $H$ is a Hom-Hopf algebra with bijective antipode and such that $\alpha _H$ is bijective, we consider the 
$H$-bimodule Hom-algebra $H^*_{p, q}$ defined in Example \ref{Hstar}. It is easy to see that 
$H^*_{p, q}$ is unital with unit $\varepsilon _H$, its structure map $\beta $ is bijective, we have 
$1_H\rightharpoonup f=f\leftharpoonup 1_H=\beta (f)$, for all $f\in H^*$, and 
$h\rightharpoonup \varepsilon _H=\varepsilon _H\leftharpoonup h=\varepsilon _H(h)\varepsilon _H$, 
for all $h\in H$. Consequently, the Hom-diagonal crossed product $H^*_{p, q}\bowtie H$ is 
unital with unit $\varepsilon _H\bowtie 1_H$. 
\end{remark}
\begin{definition}
If $(A, \mu _A, \alpha _A, 1_A)$ is a unital Hom-associative algebra and $(M, \alpha _M)$ is a left 
$A$-module, we say that $M$ is {\em unital} if $1_A\cdot m=\alpha _M(m)$, $\forall \;m\in M$. If 
$\alpha _A$ is bijective, we 
denote by $_A{\mathcal M}$ the category whose objects are unital left $A$-modules $(M, \alpha _M)$ 
with $\alpha _M$ bijective, the 
morphisms being morphisms of left $A$-modules. 
\end{definition}
\begin{proposition}
Let $R:B\ot A\rightarrow A\ot B$ be a unital Hom-twisting map between the unital Hom-associative algebras 
$(A, \mu _A, \alpha _A, 1_A)$ and $(B, \mu _B, \alpha _B, 1_B)$. Let $M$ be a linear space and 
$\alpha _M:M\rightarrow M$ a linear map, and assume that $\alpha _A$, $\alpha _B$, $\alpha _M$ are bijective. 
Then $(M, \alpha _M)$ is a unital left $A\ot _RB$-module if and only if $(M, \alpha _M)$ is a unital left 
$A$-module and a unital left $B$-module (actions denoted by $\cdot $) satisfying the compatibility condition 
\begin{eqnarray}
&&\alpha _B(b)\cdot (a\cdot m)=\alpha _A(a_R)\cdot (b_R\cdot m), \;\;\;\forall \;a\in A, \;b\in B, \;m\in M. 
\label{compatib1}
\end{eqnarray}
If this is the case, the left $A\ot _RB$-module structure on $M$ is given by 
\begin{eqnarray}
&&(a\ot b)\cdot m=a\cdot (\alpha _B^{-1}(b)\cdot \alpha _M^{-1}(m)), \;\;\;\forall \;a\in A, \;b\in B, \;m\in M.
\label{defprod}
\end{eqnarray}
\end{proposition}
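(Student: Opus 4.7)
\emph{Plan of proof.} The argument splits naturally into the two directions of the ``if and only if''.

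For the ($\Rightarrow$) direction I would exploit the two canonical inclusions
$i_A:A\rightarrow A\otimes _RB$, $a\mapsto a\otimes 1_B$ and $i_B:B\rightarrow A\otimes _RB$,
$b\mapsto 1_A\otimes b$. Since $R$ is unital, a direct check (using $R(1_B\otimes a)=a\otimes 1_B$
and $R(b\otimes 1_A)=1_A\otimes b$) shows that $i_A$ and $i_B$ are unital morphisms of
Hom-associative algebras, so pulling back the given action produces unital left $A$- and
$B$-module structures on $(M,\alpha _M)$ via $a\cdot m:=(a\otimes 1_B)\cdot m$ and
$b\cdot m:=(1_A\otimes b)\cdot m$. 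Applying the multiplication formula of $A\otimes _RB$ yields
$(1_A\otimes b)(a\otimes 1_B)=\alpha _A(a_R)\otimes \alpha _B(b_R)$, and combining this with the
Hom-module axiom for $A\otimes _RB$ gives precisely
$\alpha _B(b)\cdot (a\cdot m)=\alpha _A(a_R)\cdot (b_R\cdot m)$, i.e.\ (\ref{compatib1}). The formula
(\ref{defprod}) is then recovered from the identity $(\alpha _A(a)\otimes \alpha _B(b))\cdot \alpha _M(m)
=(\alpha _A(a)\otimes 1_B)\cdot (b\cdot m)$, obtained by factoring $\alpha _A(a)\otimes \alpha _B(b)
=(a\otimes 1_B)(1_A\otimes b)$ via (\ref{abtwist}) and invoking Hom-associativity of the action.

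For the ($\Leftarrow$) direction I would define the action of $A\otimes _RB$ on $M$ by
(\ref{defprod}) and verify the three requirements one by one. Checking
$\alpha _M((a\otimes b)\cdot m)=(\alpha _A(a)\otimes \alpha _B(b))\cdot \alpha _M(m)$ and the
unitality $(1_A\otimes 1_B)\cdot m=\alpha _M(m)$ are routine consequences of
(\ref{hommod1}), (\ref{righthommod1}) and the unitality of the two individual actions. The main
obstacle is the Hom-associativity axiom
$(\alpha _A(a)\otimes \alpha _B(b))\cdot ((a'\otimes b')\cdot m)=((a\otimes b)(a'\otimes b'))\cdot
\alpha _M(m)$. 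I would expand the left-hand side using (\ref{defprod}) twice together with
(\ref{hommod1}), producing a nested expression of the form
$\alpha _A(a)\cdot (b\cdot (\alpha _A^{-1}(a')\cdot (\alpha _B^{-2}(b')\cdot \alpha _M^{-2}(m))))$,
and then apply the compatibility relation (\ref{compatib1}) to the inner term
$b\cdot (\alpha _A^{-1}(a')\cdot n)$, using (\ref{homsweed0}) to identify
$\alpha _A(\alpha _A^{-1}(a')_R)=a'_R$ and $\alpha _B^{-1}(b)_R=\alpha _B^{-1}(b_R)$.
This ``commutation'' is the crucial step that moves the $A$-action past the $B$-action and
introduces the twist. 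After it, successive applications of the Hom-module axioms (\ref{hommod2})
for $A$ and (\ref{righthommod2}) for $B$, together with the multiplicativity of $\alpha _B$,
collapse the nested expression into $(aa'_R)\cdot (\alpha _B^{-1}(b_Rb')\cdot m)$, which matches
the right-hand side after computing $((a\otimes b)(a'\otimes b'))\cdot \alpha _M(m)=
(aa'_R\otimes b_Rb')\cdot \alpha _M(m)$ via (\ref{defprod}).

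Finally, I would verify that the two constructions are mutually inverse: restricting the action
defined by (\ref{defprod}) along $i_A$ gives back $a\cdot m=a\cdot (1_B\cdot \alpha _M^{-1}(m))
=a\cdot \alpha _M(\alpha _M^{-1}(m))=a\cdot m$, and similarly for $i_B$, so the correspondence between
unital $A\otimes _RB$-module structures on $(M,\alpha _M)$ and compatible pairs of unital
$A$- and $B$-module structures is bijective. I expect the balancing of the powers of $\alpha _A$,
$\alpha _B$ and $\alpha _M$ in the Hom-module verification, rather than any conceptual difficulty,
to be the only delicate point of the argument.
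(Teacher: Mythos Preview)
Your proposal is correct and follows essentially the same route as the paper's proof: restrict along $i_A$, $i_B$ for the forward direction, derive (\ref{compatib1}) by computing $((1_A\otimes b)(a\otimes 1_B))\cdot\alpha _M(m)$ two ways, and for the converse define the action by (\ref{defprod}) and verify Hom-associativity using (\ref{compatib1}) at the crucial step. One small slip: in the converse direction you cite (\ref{righthommod1}) and (\ref{righthommod2}), but both the $A$- and $B$-actions here are \emph{left} actions, so the relevant axioms are (\ref{hommod1}) and (\ref{hommod2}) in each case.
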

\begin{proof}
If $(M, \alpha _M)$ is a unital left $A\ot _RB$-module (with action denoted by $\cdot $), define actions 
of $A$ and $B$ on $M$ by $a\cdot m=(a\ot 1_B)\cdot m$ and $b\cdot m=(1_A\ot b)\cdot m$. 
Obviously we have $1_A\cdot m=1_B\cdot m=\alpha _M(m)$, and the conditions (\ref{hommod1}) and 
(\ref{hommod2}) for the actions of $A$ and $B$ follow immediately from the ones corresponding to the 
action of $A\ot _RB$. We need to prove (\ref{compatib1}). We compute: 
\begin{eqnarray*}
((1_A\ot b)(a\ot 1_B))\cdot \alpha _M(m)&=&(1_Aa_R\ot b_R1_B)\cdot \alpha _M(m)\\
&=&(\alpha _A(a_R)\ot \alpha _B(b_R))\cdot \alpha _M(m)\\
&\overset{(\ref{abtwist})}{=}&((a_R\ot 1_B)(1_A\ot b_R))\cdot \alpha _M(m).
\end{eqnarray*}
On the other hand, by using (\ref{hommod2}), we have 
\begin{eqnarray*}
((1_A\ot b)(a\ot 1_B))\cdot \alpha _M(m)&=&(1_A\ot \alpha _B(b))\cdot ((a\ot 1_B)\cdot m)\\
&=&\alpha _B(b)\cdot (a\cdot m), 
\end{eqnarray*}
\begin{eqnarray*}
((a_R\ot 1_B)(1_A\ot b_R))\cdot \alpha _M(m)&=&(\alpha _A(a_R)\ot 1_B)\cdot ((1_A\ot b_R)\cdot m)\\
&=&\alpha _A(a_R)\cdot (b_R\cdot m), 
\end{eqnarray*}
so we obtain $\alpha _B(b)\cdot (a\cdot m)=\alpha _A(a_R)\cdot (b_R\cdot m)$. Finally, to prove 
(\ref{defprod}), we compute:
\begin{eqnarray*}
a\cdot (\alpha _B^{-1}(b)\cdot \alpha _M^{-1}(m))&=&(a\ot 1_B)\cdot ((1_A\ot \alpha _B^{-1}(b))
\cdot \alpha _M^{-1}(m))\\
&\overset{(\ref{hommod2})}{=}&((\alpha _A^{-1}(a)\ot 1_B)(1_A\ot \alpha _B^{-1}(b)))\cdot m\\
&\overset{(\ref{abtwist})}{=}&(a\ot b)\cdot m.
\end{eqnarray*}
Conversely, assume that $(M, \alpha _M)$ is a unital left $A$-module and a unital left $B$-module and 
(\ref{compatib1}) holds, and define an action of $A\ot _RB$ on $M$ by 
$(a\ot b)\cdot m=a\cdot (\alpha _B^{-1}(b)\cdot \alpha _M^{-1}(m))$. We have 
$$(1_A\ot 1_B)\cdot m=
1_A\cdot (1_B\cdot \alpha _M^{-1}(m))=1_A\cdot m=\alpha _M(m).$$
We prove (\ref{hommod1}):
\begin{eqnarray*}
\alpha _M((a\ot b)\cdot m)&=&\alpha _M(a\cdot \alpha _M^{-1}(b\cdot m))\\
&\overset{(\ref{hommod1})}{=}&\alpha _A(a)\cdot (b\cdot m)\\
&=&\alpha _A(a)\cdot (\alpha _B^{-1}(\alpha _B(b))\cdot \alpha _M^{-1}(\alpha _M(m)))\\
&=&(\alpha _A(a)\ot \alpha _B(b))\cdot \alpha _M(m)\\
&=&(\alpha _A\ot \alpha _B)(a\ot b)\cdot \alpha _M(m). 
\end{eqnarray*}
Now we prove (\ref{hommod2}):
\begin{eqnarray*}
((a\ot b)(a'\ot b'))\cdot \alpha _M(m)&=&
(aa'_R\ot b_Rb')\cdot \alpha _M(m)\\
&=&(aa'_R)\cdot ([\alpha _B^{-1}(b_R)\alpha _B^{-1}(b')]\cdot m)\\
&\overset{(\ref{hommod2})}{=}&(aa'_R)\cdot (b_R\cdot (\alpha _B^{-1}(b')\cdot \alpha _M^{-1}(m)))\\
&\overset{(\ref{hommod2})}{=}&\alpha _A(a)\cdot (a'_R\cdot \alpha _M^{-1}(b_R\cdot 
(\alpha _B^{-1}(b')\cdot \alpha _M^{-1}(m))))\\
&\overset{(\ref{hommod1})}{=}&\alpha _A(a)\cdot (a'_R\cdot (\alpha _B^{-1}(b_R)\cdot 
(\alpha _B^{-2}(b')\cdot \alpha _M^{-2}(m))))\\
&\overset{(\ref{compatib1})}{=}&\alpha _A(a)\cdot (b\cdot (\alpha _A^{-1}(a')\cdot 
(\alpha _B^{-2}(b')\cdot \alpha _M^{-2}(m))))\\
&\overset{(\ref{hommod1})}{=}&\alpha _A(a)\cdot (b\cdot \alpha _M^{-1}(a'\cdot 
(\alpha _B^{-1}(b')\cdot \alpha _M^{-1}(m))))\\
&=&(\alpha _A(a)\ot \alpha _B(b))\cdot (a'\cdot 
(\alpha _B^{-1}(b')\cdot \alpha _M^{-1}(m)))\\
&=&(\alpha _A\ot \alpha _B)(a\ot b)\cdot ((a'\ot b')\cdot m),
\end{eqnarray*}
finishing the proof.
\end{proof}
\begin{corollary}\label{moddiag}
Let $D\bowtie H$ be a Hom-diagonal crossed product such that $D$ is unital and 
$h\cdot 1_D=1_D\cdot h=\varepsilon _H(h)1_D$, $\forall \;h\in H$. If $M$ is a linear space and 
$\alpha _M:M\rightarrow M$ a bijective linear map, then $(M, \alpha _M)$ is a unital left $D\bowtie H$-module 
if and only if $(M, \alpha _M)$ is a unital left $D$-module and a unital left $H$-module (actions denoted by 
$\cdot $) such that, $\forall \;h\in H, \;d\in D, \;m\in M$:
\begin{eqnarray}
&&\alpha _H(h)\cdot (d\cdot m)=[(\alpha _H^{-2}(h_1)\cdot \alpha _D^{-1}(d))\cdot \alpha _H^{-2}
(S^{-1}((h_2)_2))]\cdot (\alpha _H^{-2}((h_2)_1)\cdot m), 
\end{eqnarray}
and if this is the case then we have 
\begin{eqnarray}
&&(d\bowtie h)\cdot m=d\cdot (\alpha _H^{-1}(h)\cdot \alpha _M^{-1}(m)), 
\;\;\;\forall \;d\in D, \;h\in H, \;m\in M. \label{defdiagprod}
\end{eqnarray}
\end{corollary}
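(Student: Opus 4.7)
The plan is to apply the preceding proposition to the Hom-twisted tensor product realization $D\bowtie H = D\ot_P H$ where $P = Q^{-1}\circ R$ is the Hom-twisting map constructed in Proposition \ref{biject}. In broad outline, I want to show that the compatibility condition for a module over the Hom-twisted tensor product, specialized to $P$, becomes precisely the stated identity, and that the module-action formula (\ref{defdiagprod}) is just (\ref{defprod}) translated into diagonal-crossed-product notation.

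First I would verify (or cite the remark just above) that $P$ is a \emph{unital} Hom-twisting map, so that the preceding proposition is applicable. This amounts to checking $P(1_H\ot d) = d\ot 1_H$ and $P(h\ot 1_D) = 1_D\ot h$. The former uses $1_H\cdot \alpha_D^{-1}(d) = d$ together with $S^{-1}(1_H) = 1_H$ and $\alpha_H(1_H) = 1_H$; the latter uses the hypothesis $h\cdot 1_D = \varepsilon_H(h)1_D$ together with the counit axiom $\varepsilon_H(h_1)\alpha_H^{-1}(h_2) = h$, and then the analogous computation through $Q^{-1}$ with $\varepsilon_H\circ S^{-1} = \varepsilon_H$.

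Next, having identified $D\bowtie H$ as a Hom-twisted tensor product of the unital Hom-associative algebras $D$ and $H$ via the unital Hom-twisting map $P$, the preceding proposition says that $(M,\alpha_M)$ is a unital left $D\bowtie H$-module iff it is a unital left $D$-module and a unital left $H$-module satisfying
\begin{eqnarray*}
\alpha_H(h)\cdot (d\cdot m) = \alpha_D(d_P)\cdot (h_P\cdot m),
\end{eqnarray*}
with the action defined by $(d\ot h)\cdot m = d\cdot(\alpha_H^{-1}(h)\cdot \alpha_M^{-1}(m))$. The formula (\ref{defdiagprod}) is then immediate upon rewriting $d\ot h = d\bowtie h$.

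The only remaining step — and the one that requires care — is to unpack $\alpha_D(d_P)\ot h_P$ using the explicit formula $P(h\ot d) = (\alpha_H^{-3}(h_1)\cdot \alpha_D^{-2}(d))\cdot \alpha_H^{-3}(S^{-1}((h_2)_2)) \ot \alpha_H^{-2}((h_2)_1)$. Here I would apply $\alpha_D$ to the first tensorand using the two compatibilities $\alpha_D(h\cdot d) = \alpha_H(h)\cdot\alpha_D(d)$ and $\alpha_D(d\cdot h) = \alpha_D(d)\cdot\alpha_H(h)$, which gives
\begin{eqnarray*}
\alpha_D(d_P) = (\alpha_H^{-2}(h_1)\cdot \alpha_D^{-1}(d))\cdot \alpha_H^{-2}(S^{-1}((h_2)_2)),
\end{eqnarray*}
while $h_P = \alpha_H^{-2}((h_2)_1)$ is already in the desired form. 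Substituting these into the generic compatibility condition above yields precisely the stated identity, completing the proof. I do not expect a genuine obstacle here: the argument is essentially a matter of bookkeeping, with the only subtlety being the careful application of the compatibility of $\alpha_D$ with the two actions when pulling $\alpha_D$ through the iterated expression defining $d_P$.
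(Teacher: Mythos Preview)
Your proposal is correct and follows exactly the approach the paper intends: the corollary is an immediate specialization of the preceding proposition to the unital Hom-twisting map $P$ (whose unitality is the content of the remark just before the corollary), and your unpacking of $\alpha_D(d_P)\ot h_P$ from the explicit formula for $P$ is precisely the bookkeeping needed to obtain the stated compatibility condition.
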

\section{Left-right Yetter-Drinfeld modules}
\setcounter{equation}{0}
\begin{definition} (\cite{mp2})  Let $(C, \Delta _C , \alpha _C)$ be a Hom-coassociative coalgebra, 
$M$ a linear space and $\alpha _M:M\rightarrow M$ a linear map. 
A {\em right $C$-comodule} structure on $(M, \alpha _M)$ consists of a linear map 
$\rho :M\rightarrow M\ot C$ satisfying the following conditions:
\begin{eqnarray}
&&(\alpha _M\ot \alpha _C)\circ \rho =\rho \circ \alpha _M,  
\label{rightcom1}\\
&&(\alpha _M\ot \Delta _C)\circ \rho =(\rho \ot \alpha _C)\circ \rho . 
\label{rightcom2}
\end{eqnarray} 
We usually denote $\rho (m)=m_{(0)}\ot m_{(1)}$. If $C$ is counital, then $(M, \alpha _M)$ is called 
{\em counital} if $\varepsilon _C(m_{(1)})m_{(0)}=\alpha _M(m)$, for all $m\in M$. 
If $(M, \alpha _M)$ and $(N, \alpha _N)$ are right $C$-comodules, a morphism of right $C$-comodules 
$f:M\rightarrow N$ is a linear map with $\alpha _N\circ f=f\circ \alpha _M$ and 
$f(m)_{(0)}\ot f(m)_{(1)}=f(m_{(0)})\ot m_{(1)}$, for all $m\in M$. 
\end{definition}

The concept of left-left Yetter-Drinfeld module over a Hom-bialgebra was introduced in \cite{mp1}. 
Similarly one can introduce left-right Yetter-Drinfeld modules. Since we will be interested here 
to work over Hom-Hopf algebras, we will impose unitality conditions and bijectivity of structure maps 
in the definition. 
\begin{definition}
Let $(H, \mu _H, \Delta _H, \alpha _H, 1_H, \varepsilon _H, S)$ be a Hom-Hopf algebra with bijective 
antipode and bijective $\alpha _H$. Let $M$ be a linear space and $\alpha _M:M\rightarrow M$ 
a bijective linear map. Then $(M, \alpha _M)$ is called a left-right Yetter-Drinfeld module over $H$ if 
$(M, \alpha _M)$ is a unital left $H$-module (action denoted by $\cdot $) and a counital right 
$H$-comodule (coaction denoted by $m\mapsto m_{(0)}\ot m_{(1)}\in M\ot H$) satisfying the following 
compatibility condition, for all $h\in H$, $m\in M$:
\begin{eqnarray}
&&\alpha _H(h_1)\cdot m_{(0)}\ot \alpha _H^2(h_2)\alpha _H(m_{(1)})=(h_2\cdot m)_{(0)}\ot 
(h_2\cdot m)_{(1)}\alpha _H^2(h_1). \label{YDlr}
\end{eqnarray}
We denote by $_H{\mathcal YD}^H$ the category whose objects are left-right Yetter-Drinfeld modules 
over $H$, morphisms being linear maps that are morphisms of left $H$-modules and 
right $H$-comodules. 
\end{definition}
\begin{remark}
Similarly to what happens for left-left Yetter-Drinfeld modules (see \cite{mp1}), left-right Yetter-Drinfeld 
modules over Hopf algebras become, via the ''twisting procedure'', left-right Yetter-Drinfeld modules 
over Hom-Hopf algebras. 
\end{remark}
\begin{remark}
Similarly to what happens for Hopf algebras, one can prove that condition (\ref{YDlr}) is equivalent to 
(for all $h\in H$, $m\in M$)
\begin{eqnarray}
&&(h\cdot m)_{(0)}\ot (h\cdot m)_{(1)}=\alpha _H^{-1}((h_2)_1)\cdot m_{(0)}\ot 
[\alpha _H^{-2}((h_2)_2)\alpha _H^{-1}(m_{(1)})]S^{-1}(h_1). \label{echivYDlr}
\end{eqnarray}
\end{remark}

Similarly to what we have proved in \cite{mp1} for left-left Yetter-Drinfeld modules, one can 
prove the following result:
\begin{proposition}
Let $(H, \mu _H, \Delta _H, \alpha _H, 1_H, \varepsilon _H, S)$ be a Hom-Hopf algebra with bijective 
antipode and bijective $\alpha _H$. \\
(i) If $(M, \alpha _M)$ and $(N, \alpha _N)$ are objects in $_H{\mathcal YD}^H$, then 
$(M\ot N, \alpha _M\ot \alpha _N)$ becomes an object in $_H{\mathcal YD}^H$ (denoted in 
what follows by $M\hot N$) with structures 
\begin{eqnarray*}
&&H\ot (M\ot N)\rightarrow M\ot N, \;\;\;h\ot (m\ot n)\mapsto h_1\cdot m\ot h_2\cdot n, \\
&&M\ot N\rightarrow (M\ot N)\ot H, \;\;\;m\ot n\mapsto (m_{(0)}\ot n_{(0)})\ot 
\alpha _H^{-2}(n_{(1)}m_{(1)}).
\end{eqnarray*}
(ii) $(k, id_k)$ is an object in $_H{\mathcal YD}^H$, with action and coaction defined by $h\cdot \lambda =
\varepsilon _H(h)\lambda $ and $\lambda _{(0)}\ot \lambda _{(1)}=\lambda \ot 1_H$, for all $\lambda \in k$. \\
(iii) $_H{\mathcal YD}^H$ is a braided monoidal category, with tensor product $\hot $, unit $(k, id_k)$, 
associativity constraints, unit constraints and braiding and its inverse defined (for all $(M, \alpha _M)$, 
$(N, \alpha _N)$, $(P, \alpha _P)$ in $_H{\mathcal YD}^H$ and 
$m\in M$, $n\in N$, $p\in P$, 
$\lambda \in k$) by 
\begin{eqnarray*}
&&a_{M, N, P}:(M\hot N)\hot P\rightarrow M\hot (N\hot P), \;\;\;
a_{M, N, P}((m\ot n)\ot p)=\alpha _M^{-1}(m)\ot (n\ot \alpha _P(p)), \\
&&l_M:k\hot M\rightarrow M, \;\;\;l_M(\lambda \ot m)=\lambda \alpha _M^{-1}(m), \\
&&r_M:M\hot k\rightarrow M, \;\;\;r_M(m\ot \lambda )=\lambda \alpha _M^{-1}(m), \\
&&c_{M, N}:M\hot N\rightarrow N\hot M, \;\;\;c_{M, N}(m\ot n)=\alpha _N^{-1}(n_{(0)})\ot 
\alpha _M^{-1}(\alpha _H^{-1}(n_{(1)})\cdot m), \\
&&c_{M, N}^{-1}:N\hot M\rightarrow M\hot N, \;\;\;c_{M, N}^{-1}(n\ot m)=
\alpha _M^{-1}(\alpha _H^{-1}(S(n_{(1)}))\cdot m)\ot \alpha _N^{-1}(n_{(0)}).
\end{eqnarray*}
\end{proposition}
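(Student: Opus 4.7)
The plan is to mimic closely the strategy used in \cite{mp1} for left-left Yetter-Drinfeld modules, adapting each calculation to the left-right convention governed by (\ref{YDlr}) (or its equivalent form (\ref{echivYDlr})).

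For part (i), I would first check that the proposed action is a unital left $H$-module structure on $(M\ot N,\alpha_M\ot\alpha_N)$; this is exactly \prref{tensprodmod}. Dually, I would verify that the formula $m\ot n\mapsto (m_{(0)}\ot n_{(0)})\ot \alpha_H^{-2}(n_{(1)}m_{(1)})$ defines a counital right $H$-comodule structure, the only nontrivial point being Hom-coassociativity (\ref{rightcom2}), which follows from Hom-coassociativity of $H$, the comodule axioms for $M$ and $N$, and the factors of $\alpha_H^{-2}$ (these are precisely what compensates for $\Delta\circ\alpha_H=(\alpha_H\ot\alpha_H)\circ\Delta$ used twice). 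The key calculation is the Yetter-Drinfeld compatibility (\ref{YDlr}) for $M\hot N$: I would expand $(h_2\cdot(m\ot n))_{(0)}\ot(h_2\cdot(m\ot n))_{(1)}\alpha_H^2(h_1)$, apply (\ref{YDlr}) first for $N$ (with the element $h_2$ of the coproduct) and then for $M$, using Hom-coassociativity of $\Delta$ and multiplicativity of $\alpha_H$ to redistribute the shifts; the $\alpha_H^{-2}$ in the coaction exactly cancels the $\alpha_H^2$ coming from the two applications of (\ref{YDlr}). Part (ii) is immediate from $\varepsilon_H(hh')=\varepsilon_H(h)\varepsilon_H(h')$ and $\Delta(1_H)=1_H\ot 1_H$.

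For part (iii), the first task is to verify that $a_{M,N,P}$, $l_M$, $r_M$, $c_{M,N}$ and $c_{M,N}^{-1}$ really land in $_H{\mathcal YD}^H$, i.e.\ are morphisms both of left $H$-modules and of right $H$-comodules. For $a$, $l$, $r$ this is a direct check using (\ref{hommod1}), (\ref{rightcom1}) and comultiplicativity. The naturality of $a,l,r$ and the pentagon and triangle axioms reduce, as in the ordinary monoidal case, to identities among the $\alpha$-shifts, and these are the same verifications we performed in \cite{mp1} for the left-left setting, with the coaction moved to the right. Showing that $c_{M,N}$ is $H$-linear amounts to a computation using (\ref{echivYDlr}); showing that $c_{M,N}$ is $H$-colinear uses Hom-coassociativity together with (\ref{rightcom2}) applied to $n$.

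The genuinely delicate part, and the step I expect to be the main obstacle, is the verification of the two hexagon axioms for the braiding. Both sides of each hexagon involve a triple tensor product $M\hot N\hot P$, combined with two applications of $c$, one associator, and the formulas for the action and coaction of $M\hot N$ (or $N\hot P$). One must rewrite $c_{M\hot N,P}$ and $c_{M,N\hot P}$ by pushing the coaction of the tensor product through the action, and this forces careful bookkeeping of factors of $\alpha_H^{\pm 1}$, $\alpha_M^{\pm 1}$, $\alpha_N^{\pm 1}$, $\alpha_P^{\pm 1}$; the identity finally collapses using (\ref{YDlr}) on the middle factor to swap a coaction past an action, plus Hom-(co)associativity. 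The verification that $c^{-1}_{M,N}$ is a two-sided inverse of $c_{M,N}$ uses the bijectivity of $S$ together with (\ref{ant}) in the form $S(h_1)h_2=\varepsilon_H(h)1_H$ and its counterpart (\ref{invant}), applied to the coaction element $n_{(1)}$ after a careful application of the comodule axiom (\ref{rightcom2}) and the counit property. Once the hexagons and invertibility are established, naturality of $c$ and $c^{-1}$ follows from the module and comodule morphism conditions, completing the proof.
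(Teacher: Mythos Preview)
Your proposal is correct and follows exactly the approach the paper itself takes: the paper offers no proof beyond the sentence ``Similarly to what we have proved in \cite{mp1} for left-left Yetter-Drinfeld modules, one can prove the following result,'' and your plan is precisely to transport those computations to the left-right setting governed by (\ref{YDlr}) and (\ref{echivYDlr}). If anything, you have supplied more detail than the paper does.
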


The proof of the following result is straightforward and is left to the reader.
\begin{proposition}
Let $(H, \mu _H, \Delta _H, \alpha _H, 1_H, \varepsilon _H, S)$ be a Hom-Hopf algebra with bijective 
antipode and bijective $\alpha _H$. Consider the unital Hom-associative algebra $H^*$, with 
multiplication and structure map defined by 
\begin{eqnarray*}
&&(f\bullet g)(h)=f(\alpha _H^{-2}(h_1))g(\alpha _H^{-2}(h_2)), \;\;\;\forall \;f, g\in H^*, \;h\in H, \\
&&\beta :H^*\rightarrow H^*, \;\;\;\beta (f)(h)=f(\alpha _H^{-1}(h)), \;\;\;\forall \;f\in H^*, \;h\in H.
\end{eqnarray*}
(i) If $(M, \alpha _M)$ is a counital right $H$-comodule, with coaction $m\mapsto m_{(0)}\ot m_{(1)}$, 
then $(M, \alpha _M)$ becomes a unital left $H^*$-module, with action $f\cdot m=f(m_{(1)})m_{(0)}$, 
for all $f\in H^*$, $m\in M$. \\
(ii) Assume that $H$ is moreover finite dimensional. If $(M, \alpha _M)$ is a unital left $H^*$-module 
(action denoted by $\cdot $), then $(M, \alpha _M)$ becomes a counital right $H$-comodule, 
with coaction defined by $M\rightarrow M\ot H$, $m\mapsto \sum _ie^i\cdot m\ot e_i$, where 
$\{e_i\}$, $\{e^i\}$ is a pair of dual bases in $H$ and $H^*$ (of course, the coaction does not 
depend on the choice of the dual bases). 
\end{proposition}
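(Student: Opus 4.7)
The plan is to verify the two statements in turn, relying mostly on the compatibility between the twist $\beta$ on $H^*$ and the structure map $\alpha_H$, and in (ii) on the basis-independence of constructions built from a dual basis $\{e_i\},\{e^i\}$.

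For part (i), I would begin by checking unitality. Since $(M,\alpha_M)$ is a counital right $H$-comodule, we have $\varepsilon_H\cdot m = \varepsilon_H(m_{(1)})m_{(0)} = \alpha_M(m)$, as required. For condition (\ref{hommod1}), I would compute $\beta(f)\cdot \alpha_M(m) = \beta(f)(\alpha_M(m)_{(1)})\,\alpha_M(m)_{(0)}$, applying (\ref{rightcom1}) to replace $\alpha_M(m)_{(0)}\otimes \alpha_M(m)_{(1)}$ by $\alpha_M(m_{(0)})\otimes \alpha_H(m_{(1)})$, and then using $\beta(f)(\alpha_H(m_{(1)})) = f(m_{(1)})$ to get $f(m_{(1)})\alpha_M(m_{(0)}) = \alpha_M(f\cdot m)$. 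For condition (\ref{hommod2}), the key is the Hom-coassociativity identity $m_{(0)(0)}\otimes \alpha_H^{-1}(m_{(0)(1)})\otimes m_{(1)} = \alpha_M(m_{(0)})\otimes \alpha_H^{-1}(m_{(1)1})\otimes \alpha_H^{-1}(m_{(1)2})$, obtained from (\ref{rightcom2}) after applying $\alpha_H^{-1}$ to the middle (or third) factor. Evaluating $\beta(f)\cdot(g\cdot m)$ and $(f\bullet g)\cdot \alpha_M(m)$ using the definitions of the action and of $\bullet$, and applying this identity, the two sides match.

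For part (ii), I would first note the standing fact that if $\{e_i\},\{e^i\}$ is any pair of dual bases then so is $\{\alpha_H(e_i)\},\{\beta(e^i)\}$, since $\beta(e^i)(\alpha_H(e_j))=e^i(e_j)=\delta_{ij}$; and that the coaction $\rho(m)=\sum_i e^i\cdot m\otimes e_i$ is basis-independent by the usual argument. Condition (\ref{rightcom1}) then follows immediately, because $(\alpha_M\otimes \alpha_H)\rho(m)=\sum_i \alpha_M(e^i\cdot m)\otimes \alpha_H(e_i)=\sum_i \beta(e^i)\cdot \alpha_M(m)\otimes \alpha_H(e_i)$ by (\ref{hommod1}), and this is just $\rho(\alpha_M(m))$ computed in the dual basis $\{\alpha_H(e_i)\},\{\beta(e^i)\}$. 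Counitality is the identity $\sum_i\varepsilon_H(e_i)(e^i\cdot m)=\varepsilon_H\cdot m=\alpha_M(m)$, using that $\sum_i\varepsilon_H(e_i)e^i=\varepsilon_H$ in $H^*$ and unitality of the $H^*$-action.

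For Hom-coassociativity (\ref{rightcom2}), the cleanest strategy is to test both sides of the proposed equation against an arbitrary functional $g\otimes f\in H^*\otimes H^*$ applied in the second and third tensor slots. The left side becomes $g\cdot(\beta^{-1}(f)\cdot m)$ after collapsing the sums via the dual-basis identities $\sum_j g(e_j)e^j=g$ and $\sum_i f(\alpha_H(e_i))e^i=\beta^{-1}(f)$. The right side becomes $\beta(\varphi)\cdot \alpha_M(m)$, where $\varphi(h)=g(h_1)f(h_2)$; a direct computation from the definition of $\bullet$ shows $\varphi=\beta^{-2}(g)\bullet \beta^{-2}(f)$, and multiplicativity of $\beta$ gives $\beta(\varphi)=\beta^{-1}(g)\bullet \beta^{-1}(f)$. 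The required equality then reduces to (\ref{hommod2}) for the $H^*$-action with $a=\beta^{-1}(g)$, $a'=\beta^{-1}(f)$. The main technical obstacle throughout is bookkeeping of the twists $\alpha_H^{\pm 1}$ and $\beta^{\pm 1}$; in particular it is the $\alpha_H^{-2}$ factors in the definition of $\bullet$ that force the $\beta^{-2}$ correction in the last step, and it is important to exploit $\beta^{-1}=\alpha_H^*$ and multiplicativity of $\beta$ to collapse them cleanly.
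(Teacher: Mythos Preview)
Your argument is correct in both parts. In the paper this proposition is stated with the remark that the proof ``is straightforward and is left to the reader'', so there is no proof in the paper to compare against; your verification via the dual-basis trick and reduction of (\ref{rightcom2}) to the module axiom (\ref{hommod2}) is exactly the kind of direct check the authors had in mind.
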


Let again $(H, \mu _H, \Delta _H, \alpha _H, 1_H, \varepsilon _H, S)$ be a Hom-Hopf algebra with bijective 
antipode and bijective $\alpha _H$. From now on, we will denote by $H^*$ the unital $H$-bimodule 
Hom-algebra $H^*_{0, 0}$ (notation as in Example \ref{Hstar}), whose unit is $\varepsilon _H$, 
multiplication $\bullet $, structure map $\beta $ and $H$-actions  
\begin{eqnarray*}
&&\rightharpoonup :H\ot H^*\rightarrow H^*, \;\;\;
(h\rightharpoonup f)(h')=f(\alpha _H^{-2}(h')h), \\
&&\leftharpoonup :H^*\ot H\rightarrow H^*, \;\;\;(f\leftharpoonup h)(h')=
f(h\alpha _H^{-2}(h')).
\end{eqnarray*}

For $f\in H^*$ and $h\in H$, we will also denote $f(h)=\le f, h\ri $. 

To simplify notation in the proofs of the next results, we will use the following form of Sweedler-type 
notation: 
\begin{eqnarray*}
&&h_1\ot (h_2)_1\ot (h_2)_2=h_1\ot h_{21}\ot h_{22}, \\
&&h_1\ot ((h_2)_1)_1\ot ((h_2)_1)_2\ot (h_2)_2=h_1\ot h_{211}\ot h_{212}\ot h_{22}, \;\;\;etc...
\end{eqnarray*}
\begin{proposition}
We have a functor $F:\;_H{\mathcal YD}^H\rightarrow \;_{H^*\bowtie H}{\mathcal M}$, 
given by $F((M, \alpha _M))=(M, \alpha _M)$ at the linear level, with $H^*\bowtie H$-action defined by 
\begin{eqnarray*}
&&(f\bowtie h)\cdot m=\le f, (\alpha _H^{-1}(h)\cdot \alpha _M^{-1}(m))_{(1)}\ri 
(\alpha _H^{-1}(h)\cdot \alpha _M^{-1}(m))_{(0)}, 
\end{eqnarray*}
for all $f\in H^*$, $h\in H$, $m\in M$. On morphisms, $F$ acts as identity. 
\end{proposition}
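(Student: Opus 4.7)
The natural strategy is to reduce everything to \coref{moddiag}: that corollary characterizes unital left $H^*\bowtie H$-modules in terms of a pair (unital left $H^*$-module, unital left $H$-module) satisfying a single compatibility equation, and also gives the explicit action formula (\ref{defdiagprod}). So the plan is to produce this pair from the Yetter-Drinfeld data, verify the compatibility, and then observe that the resulting action is exactly the one in the statement.

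First I would use the previous proposition to convert the counital right $H$-comodule structure of $(M,\alpha_M)$ into a unital left $H^*$-module structure via $f\cdot m=\langle f,m_{(1)}\rangle m_{(0)}$. Unitality and the Hom-module axioms (\ref{hommod1})-(\ref{hommod2}) for this action follow directly from counitality of the coaction, (\ref{rightcom1})-(\ref{rightcom2}), the definition of $\bullet$ and $\beta$, and the fact that $H^*_{0,0}$ has unit $\varepsilon _H$. The unital left $H$-module structure is already part of the Yetter-Drinfeld datum, and the bimodule Hom-algebra hypotheses needed for \coref{moddiag} (namely $h\cdot\varepsilon_H=\varepsilon_H\cdot h=\varepsilon_H(h)\varepsilon_H$ and unitality of $H^*_{0,0}$) were recorded in the remark preceding \coref{moddiag}.

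The core step is verifying the compatibility equation of \coref{moddiag}, which here reads
\begin{eqnarray*}
\alpha_H(h)\cdot(f\cdot m)
=\bigl[(\alpha_H^{-2}(h_1)\rightharpoonup \beta^{-1}(f))\leftharpoonup \alpha_H^{-2}(S^{-1}(h_{22}))\bigr]\cdot (\alpha_H^{-2}(h_{21})\cdot m).
\end{eqnarray*}
I would expand the left-hand side using $f\cdot m=\langle f,m_{(1)}\rangle m_{(0)}$, and on the right expand $g\cdot n=\langle g,n_{(1)}\rangle n_{(0)}$ with $n=\alpha_H^{-2}(h_{21})\cdot m$ and $g$ the bracketed element; using the definitions of $\rightharpoonup$, $\leftharpoonup$, $\beta^{-1}$ and multiplicativity of $\alpha_H$, the right-hand side collapses to $f$ evaluated at a product of the form $(\alpha_H^{-3}(S^{-1}(h_{22}))\,\alpha_H^{-3}(n_{(1)}))\,\alpha_H^{-1}(h_1)$. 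The equality of the two sides is then exactly the Yetter-Drinfeld condition in the form (\ref{echivYDlr}), applied to the element $\alpha_H^{-2}(h_{21})$ acting on $m$, together with Hom-associativity, Hom-coassociativity (\ref{hombia1}), (\ref{hombia3}), (\ref{suplant}) and the antipode relation (\ref{invant}) to absorb the $S^{-1}(h_{22})$ factor. This bookkeeping with the twists is the main obstacle; structurally nothing deep happens, but one must be careful to keep the powers of $\alpha_H$ and $\alpha_M$ balanced and to recognise the YD identity at exactly the right moment.

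Once the compatibility is in hand, \coref{moddiag} delivers a unital left $H^*\bowtie H$-module structure on $(M,\alpha_M)$ whose action is prescribed by (\ref{defdiagprod}):
\begin{eqnarray*}
(f\bowtie h)\cdot m = f\cdot\bigl(\alpha_H^{-1}(h)\cdot\alpha_M^{-1}(m)\bigr) = \langle f,(\alpha_H^{-1}(h)\cdot\alpha_M^{-1}(m))_{(1)}\rangle\,(\alpha_H^{-1}(h)\cdot\alpha_M^{-1}(m))_{(0)},
\end{eqnarray*}
which is precisely the formula in the statement. Functoriality is then immediate: a morphism $\varphi:M\to N$ in $_H{\mathcal YD}^H$ is by definition $H$-linear and $H$-colinear (and commutes with the structure maps), so it is automatically $H^*$-linear with respect to $f\cdot m=\langle f,m_{(1)}\rangle m_{(0)}$, and therefore $H^*\bowtie H$-linear by (\ref{defdiagprod}). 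Hence the assignment $F((M,\alpha_M))=(M,\alpha_M)$, $F(\varphi)=\varphi$ is a well-defined functor.
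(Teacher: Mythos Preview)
Your proposal is correct and follows essentially the same route as the paper: both reduce the claim to the compatibility condition of \coref{moddiag}, convert the right $H$-comodule into a left $H^*$-module via $f\cdot m=\langle f,m_{(1)}\rangle m_{(0)}$, expand the compatibility using the definitions of $\rightharpoonup$, $\leftharpoonup$, $\beta^{-1}=\alpha_H^*$, and then close the computation via (\ref{echivYDlr}), Hom-(co)associativity and (\ref{invant}). The paper's proof differs only in cosmetic organisation (it substitutes $h\mapsto\alpha_H^2(h)$ to clean up exponents and records an auxiliary coassociativity identity before invoking (\ref{echivYDlr})), but the logical skeleton is identical to yours.
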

\begin{proof}
Let $(M, \alpha _M)\in \;_H{\mathcal YD}^H$. Since $M$ is a unital right $H$-comodule, it becomes a unital 
left $H^*$-module. By Corollary \ref{moddiag}, the only thing we need to prove in order to have 
$(M, \alpha _M)$ a unital left $H^*\bowtie H$-module with the prescribed action is the compatibility 
condition 
\begin{eqnarray*}
\alpha _H(h)\cdot (f(m_{(1)})m_{(0)})&=&\le (\alpha _H^{-2}(h_1)\rightharpoonup \alpha _H^*(f))
\leftharpoonup \alpha _H^{-2}(S^{-1}(h_{22})), (\alpha _H^{-2}(h_{21})\cdot m)_{(1)}\ri \\
&&\;\;\;\;\;\;\;\;
(\alpha _H^{-2}(h_{21})\cdot m)_{(0)}, 
\end{eqnarray*}
for all $f\in H^*$, $h\in H$, $m\in M$. We compute the right hand side as follows:
\begin{eqnarray*}
RHS&=&\le \alpha _H^{-2}(h_1)\rightharpoonup \alpha _H^*(f), \alpha _H^{-2}(S^{-1}(h_{22}))
\alpha _H^{-2}((\alpha _H^{-2}(h_{21})\cdot m)_{(1)})\ri 
(\alpha _H^{-2}(h_{21})\cdot m)_{(0)}\\
&=&\le \alpha _H^*(f), \alpha _H^{-4}(S^{-1}(h_{22})(\alpha _H^{-2}(h_{21})\cdot m)_{(1)})
\alpha _H^{-2}(h_1)\ri (\alpha _H^{-2}(h_{21})\cdot m)_{(0)}\\
&=&\le f,  \alpha _H^{-3}(S^{-1}(h_{22})(\alpha _H^{-2}(h_{21})\cdot m)_{(1)})
\alpha _H^{-1}(h_1)\ri (\alpha _H^{-2}(h_{21})\cdot m)_{(0)}.
\end{eqnarray*}
By replacing $h$ with $\alpha _H^2(h)$, it turns out that we need to prove the following relation: 
\begin{eqnarray*}
&&\alpha _H^3(h)\cdot (f(m_{(1)})m_{(0)})=
\le f,  [\alpha _H^{-1}(S^{-1}(h_{22}))\alpha _H^{-3}((h_{21}\cdot m)_{(1)})]
\alpha _H(h_1)\ri (h_{21}\cdot m)_{(0)}.
\end{eqnarray*}
Note first that, by repeatedly applying (\ref{hombia1}), we obtain
\begin{eqnarray}
&&h_1\ot h_{211}\ot h_{2121}\ot h_{2122}\ot h_{22}=
h_1\ot \alpha _H(h_{21})\ot \alpha _H(h_{221})\ot h_{2221}\ot \alpha _H^{-2}(h_{2222}). 
\label{gaga1}
\end{eqnarray}
Now we can compute: \\[2mm]
${\;\;\;\;}$$\le f,  [\alpha _H^{-1}(S^{-1}(h_{22}))\alpha _H^{-3}((h_{21}\cdot m)_{(1)})]
\alpha _H(h_1)\ri (h_{21}\cdot m)_{(0)}$
\begin{eqnarray*}
&\overset{(\ref{echivYDlr})}{=}&\le f, \{\alpha _H^{-1}(S^{-1}(h_{22}))
[(\alpha _H^{-5}(h_{2122})\alpha _H^{-4}(m_{(1)}))\alpha _H^{-3}(S^{-1}(h_{211}))]\}
\alpha _H(h_1)\ri \\
&&\;\;\;\;\alpha _H^{-1}(h_{2121})\cdot m_{(0)}\\
&\overset{Hom-assoc.}{=}&\le f, \{[\alpha _H^{-2}(S^{-1}(h_{22}))
(\alpha _H^{-5}(h_{2122})\alpha _H^{-4}(m_{(1)}))]\alpha _H^{-2}(S^{-1}(h_{211}))\}
\alpha _H(h_1)\ri \\
&&\;\;\;\;\alpha _H^{-1}(h_{2121})\cdot m_{(0)}\\
&\overset{Hom-assoc.}{=}&\le f, \{[(\alpha _H^{-3}(S^{-1}(h_{22}))
\alpha _H^{-5}(h_{2122}))\alpha _H^{-3}(m_{(1)})]\alpha _H^{-2}(S^{-1}(h_{211}))\}
\alpha _H(h_1)\ri \\
&&\;\;\;\;\alpha _H^{-1}(h_{2121})\cdot m_{(0)}\\
&\overset{(\ref{gaga1})}{=}&\le f, \{[(\alpha _H^{-5}(S^{-1}(h_{2222}))
\alpha _H^{-5}(h_{2221}))\alpha _H^{-3}(m_{(1)})]\alpha _H^{-1}(S^{-1}(h_{21}))\}
\alpha _H(h_1)\ri \\
&&\;\;\;\;h_{221}\cdot m_{(0)}\\
&\overset{(\ref{invant})}{=}&\le f, \{[\varepsilon _H(h_{222})1_H
\alpha _H^{-3}(m_{(1)})]\alpha _H^{-1}(S^{-1}(h_{21}))\}
\alpha _H(h_1)\ri h_{221}\cdot m_{(0)}\\
&=&\le f, \{\alpha _H^{-2}(m_{(1)})\alpha _H^{-1}(S^{-1}(h_{21}))\}
\alpha _H(h_1)\ri \alpha _H(h_{22})\cdot m_{(0)}\\
&\overset{(\ref{hombia1})}{=}&\le f, \{\alpha _H^{-2}(m_{(1)})\alpha _H^{-1}(S^{-1}(h_{12}))\}
h_{11}\ri \alpha _H^2(h_2)\cdot m_{(0)}\\
&\overset{Hom-assoc.}{=}&\le f, \alpha _H^{-1}(m_{(1)})\{\alpha _H^{-1}(S^{-1}(h_{12}))
\alpha _H^{-1}(h_{11})\}\ri \alpha _H^2(h_2)\cdot m_{(0)}\\
&\overset{(\ref{invant})}{=}&\le f, \alpha _H^{-1}(m_{(1)})\varepsilon _H(h_1)1_H
\ri \alpha _H^2(h_2)\cdot m_{(0)}\\
&=&\le f, m_{(1)}\ri \alpha _H^3(h)\cdot m_{(0)}, 
\end{eqnarray*}
and this is exactly what we wanted to prove. The fact that morphisms in 
$_H{\mathcal YD}^H$ become morphisms in $_{H^*\bowtie H}{\mathcal M}$ is easy to prove 
and is left to the reader. 
\end{proof}
\begin{proposition}
If $H$ is finite dimensional, then we have a functor $\;G:\;_{H^*\bowtie H}{\mathcal M}
\rightarrow \;_H{\mathcal YD}^H$, given by $G((M, \alpha _M))=(M, \alpha _M)$ at the linear level, 
and $H$-action and $H$-coaction on $M$:
\begin{eqnarray*}
&&h\cdot m=(\varepsilon _H\bowtie h)\cdot m, \\
&&M\rightarrow M\ot H, \;\;\;m\mapsto (e^i\bowtie 1_H)\cdot m\ot e_i:=m_{(0)}\ot m_{(1)}, 
\end{eqnarray*}
where $\{e_i\}$, $\{e^i\}$ is a pair of dual bases in $H$ and $H^*$. On morphisms, $G$ acts as identity. 
\end{proposition}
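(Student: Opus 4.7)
The plan is to verify, in turn, that $(M, \alpha_M)$ as produced by $G$ is a unital left $H$-module, a counital right $H$-comodule, satisfies the left-right Yetter--Drinfeld compatibility, and finally that $G$ is functorial on morphisms. The overall strategy is to use Corollary~\ref{moddiag} applied with $D = H^*$ to decompose the given $H^*\bowtie H$-module structure into a compatible pair of $H^*$- and $H$-module structures on $M$, to translate the $H^*$-action into a right $H$-coaction via the finite-dimensional duality established in the preceding proposition, and finally to show that the compatibility coming from Corollary~\ref{moddiag} is equivalent, under this translation, to~(\ref{YDlr}).

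First I would check that the linear maps $H \to H^*\bowtie H$, $h \mapsto \varepsilon_H \bowtie h$, and $H^* \to H^*\bowtie H$, $f \mapsto f \bowtie 1_H$, are unital morphisms of Hom-associative algebras. Both assertions follow directly from the multiplication formula of $D\bowtie H$, from the fact that $\varepsilon_H$ is the unit of $H^*$, and from the identities $h \rightharpoonup \varepsilon_H = \varepsilon_H \leftharpoonup h = \varepsilon_H(h)\varepsilon_H$. Restricting the given $H^*\bowtie H$-action along these two morphisms then yields both the proposed $H$-action $h \cdot m = (\varepsilon_H \bowtie h)\cdot m$ and a unital $H^*$-action $f \cdot m = (f \bowtie 1_H)\cdot m$, both with $\alpha_M$ bijective. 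The preceding proposition converts the $H^*$-action into exactly the coaction $m \mapsto (e^i \bowtie 1_H)\cdot m \otimes e_i$, counital because the $H^*$-action is unital.

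The main obstacle is the Yetter--Drinfeld compatibility. Here I would apply Corollary~\ref{moddiag} to $D = H^*$, substitute $f \cdot m' = f(m'_{(1)}) m'_{(0)}$ on both sides, and use the explicit formulas for $\rightharpoonup$, $\leftharpoonup$ and $\beta = (\alpha_H^*)^{-1}$ to rewrite the $H^*$-element on the right-hand side. The resulting identity takes the form
\begin{eqnarray*}
f(m_{(1)})\, \alpha_H(h) \cdot m_{(0)} = f\bigl( (\alpha_H^{-3}(S^{-1}(h_{22})) \alpha_H^{-3}((\alpha_H^{-2}(h_{21})\cdot m)_{(1)})) \alpha_H^{-1}(h_1) \bigr)\, (\alpha_H^{-2}(h_{21})\cdot m)_{(0)},
\end{eqnarray*}
for every $f \in H^*$. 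Since $H$ is finite dimensional, $H^*$ separates points of $H$, so stripping off $f$ gives an equality in $M \otimes H$. Replacing $h$ by $\alpha_H^2(h)$ and cleaning up by repeated use of (\ref{hombia1}), (\ref{hombia3}), Hom-associativity and the antipode identity~(\ref{invant}), this rearranges into exactly~(\ref{echivYDlr}), which is equivalent to~(\ref{YDlr}). In essence this step runs the long chain of equalities in the proof of the functor $F$ backwards; the only genuine difficulty is the bookkeeping of Sweedler subscripts and $\alpha_H$-powers.

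Finally, for functoriality, if $g:(M, \alpha_M) \to (N, \alpha_N)$ is a morphism in $_{H^*\bowtie H}{\mathcal M}$, then $g((\varepsilon_H \bowtie h)\cdot m) = (\varepsilon_H \bowtie h)\cdot g(m)$ and $g((e^i \bowtie 1_H)\cdot m) = (e^i \bowtie 1_H)\cdot g(m)$ immediately show that $G(g) = g$ is $H$-linear and $H$-colinear, and preservation of identities and compositions is then trivial.
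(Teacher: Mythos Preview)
Your overall strategy---restrict the $H^*\bowtie H$-action via Corollary~\ref{moddiag}, translate the $H^*$-action into a coaction, and then extract the Yetter--Drinfeld condition from the module compatibility---is sound, but the key step has a real gap. You write that after stripping $f$ and replacing $h$ by $\alpha_H^2(h)$, the compatibility ``rearranges into exactly~(\ref{echivYDlr})'' and that this ``runs the long chain of equalities in the proof of the functor $F$ backwards.'' That is not how the proof of $F$ works: there,~(\ref{echivYDlr}) is an \emph{input}, used to expand $(h_{21}\cdot m)_{(0)}\otimes (h_{21}\cdot m)_{(1)}$ midway through the chain. Reversing the chain would therefore require the very identity you are trying to establish. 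Your stripped compatibility has the coaction applied to $\alpha_H^{-2}(h_{21})\cdot m$ on one side and the plain coaction of $m$ on the other; isolating $(k\cdot m)_{(0)}\otimes (k\cdot m)_{(1)}$ for a general $k$ from this is not a matter of bookkeeping---it needs a genuine antipode-based inversion (multiplying by suitable Sweedler pieces of $h$ and collapsing via~(\ref{ant}) or~(\ref{invant})) that you have not supplied.

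The paper avoids this difficulty by taking a different route: it does not pass through Corollary~\ref{moddiag} at all for this direction, but instead verifies~(\ref{YDlr}) directly. Starting from $\alpha_H(h_1)\cdot m_{(0)}\otimes \alpha_H^2(h_2)\alpha_H(m_{(1)})$, it writes both the action and coaction in terms of the $H^*\bowtie H$-module structure, uses~(\ref{hommod2}) to combine $(\varepsilon_H\bowtie h_1)$ and $(e^i\bowtie 1_H)$ into a single product in $H^*\bowtie H$, computes that product explicitly, and then applies a dual-bases identity of the form
\[
\beta((h\rightharpoonup \alpha_H^{*\,2}(e^i))\leftharpoonup g)\otimes e_i = \beta(e^i)\otimes (g\,\alpha_H^{-2}(e_i))\,\alpha_H^2(h)
\]
to move the Sweedler components of $h$ over to the $H$-tensorand. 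After a coassociativity rewrite and two applications of~(\ref{invant}), this collapses to the right-hand side of~(\ref{YDlr}). This direct computation sidesteps the need to ``invert'' anything. Your approach can probably be completed, but you should either carry out the missing inversion explicitly or switch to the paper's direct verification of~(\ref{YDlr}).
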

\begin{proof}
It is obvious that, for $(M, \alpha _M)\in \;_{H^*\bowtie H}{\mathcal M}$,  $G(M)$ is a unital left $H$-module 
and a counital right $H$-comodule (the coaction is obtained from the left $H^*$-action, which in turn 
is obtained by restricting the $H^*\bowtie H$-action). We need to prove the Yetter-Drinfeld 
compatibility condition (\ref{YDlr}). Note first that by (\ref{hombia1}) we have (for all $h\in H$)
\begin{eqnarray}
&&h_{11}\ot h_{121}\ot h_{122}\ot h_2=\alpha _H(h_1)\ot \alpha _H(h_{21})\ot h_{221}\ot 
\alpha _H^{-2}(h_{222}). \label{gaga2}
\end{eqnarray} 
Note also that, by applying on an element in $H$ on the first tensor component, one can see that 
\begin{eqnarray}
&&\beta ((h\rightharpoonup \alpha _H^{*\;2}(e^i))\leftharpoonup g)\ot e_i=
\beta (e^i)\ot (g\alpha _H^{-2}(e_i))\alpha _H^2(h), \label{gaga3}
\end{eqnarray}
for all $h, g\in H$. Now we compute:\\[2mm]
${\;\;\;\;\;}$$\alpha _H(h_1)\cdot m_{(0)}\ot \alpha _H^2(h_2)\alpha _H(m_{(1)})$
\begin{eqnarray*}
&=&(\varepsilon _H\bowtie \alpha _H(h_1))\cdot ((e^i\bowtie 1_H)\cdot m)\ot \alpha _H^2(h_2)
\alpha _H(e_i)\\
&\overset{(\ref{hommod2})}{=}&((\varepsilon _H\bowtie h_1)(e^i\bowtie 1_H))
\cdot \alpha _M(m)\ot \alpha _H^2(h_2)
\alpha _H(e_i)\\
&=&\{\beta ((\alpha _H^{-3}(h_{11})\rightharpoonup \alpha _H^{*\;2}(e^i))\leftharpoonup 
\alpha _H^{-3}(S^{-1}(h_{122})))\bowtie \alpha _H^{-1}(h_{121})\}\cdot \alpha _M(m)\\
&&\;\;\;\;\;\ot 
\alpha _H^2(h_2)\alpha _H(e_i)\\
&\overset{(\ref{gaga3})}{=}&(\beta (e^i)\bowtie \alpha _H^{-1}(h_{121}))\cdot \alpha _M(m) 
\ot \alpha _H^2(h_2)\{[\alpha _H^{-2}(S^{-1}(h_{122}))\alpha _H^{-1}(e_i)]h_{11}\}\\
&\overset{Hom-assoc.}{=}&(\beta (e^i)\bowtie \alpha _H^{-1}(h_{121}))\cdot \alpha _M(m) 
\ot \{\alpha _H(h_2)[\alpha _H^{-2}(S^{-1}(h_{122}))\alpha _H^{-1}(e_i)]\}\alpha _H(h_{11})\\
&\overset{Hom-assoc.}{=}&(\beta (e^i)\bowtie \alpha _H^{-1}(h_{121}))\cdot \alpha _M(m) 
\ot \{[h_2\alpha _H^{-2}(S^{-1}(h_{122}))]e_i\}\alpha _H(h_{11})\\
&\overset{(\ref{gaga2})}{=}&(\beta (e^i)\bowtie h_{21})\cdot \alpha _M(m) 
\ot \{[\alpha _H^{-2}(h_{222})\alpha _H^{-2}(S^{-1}(h_{221}))]e_i\}\alpha _H^2(h_1)\\
&\overset{(\ref{invant})}{=}&(\beta (e^i)\bowtie h_{21})\cdot \alpha _M(m) 
\ot (\varepsilon _H(h_{22})1_He_i)\alpha _H^2(h_1)\\
&=&(\beta (e^i)\bowtie \alpha _H(h_2))\cdot \alpha _M(m) 
\ot \alpha _H(e_i)\alpha _H^2(h_1)\\
&\overset{(\ref{defdiagprod})}{=}&\beta (e^i)\cdot (h_2\cdot m)\ot \alpha _H(e_i)\alpha _H^2(h_1)\\
&=&(h_2\cdot m)_{(0)}\ot (h_2\cdot m)_{(1)}\alpha _H^2(h_1),
\end{eqnarray*}
where for the last equality we used the fact that $\{\alpha _H(e_i)\}$ and $\{\beta (e^i)\}$ is also 
a pair of dual bases. So indeed $M\in \;_H{\mathcal YD}^H$. We leave to the reader to prove that 
morphisms in $_{H^*\bowtie H}{\mathcal M}$ become morphisms in 
$_H{\mathcal YD}^H$.
\end{proof}

Since it is obvious that the functors $F$ and $G$ are inverse to each other, we obtain:
\begin{theorem} \label{YDdiag}
If $H$ is a finite dimensional Hom-Hopf algebra with bijective antipode and bijective structure map, 
the categories $_{H^*\bowtie H}{\mathcal M}$ and 
$_H{\mathcal YD}^H$ are isomorphic. 
\end{theorem}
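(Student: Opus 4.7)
The theorem asserts that the two functors $F$ and $G$ constructed in the preceding propositions are mutually inverse, giving an isomorphism of categories. Since both $F$ and $G$ act as the identity on the underlying pairs $(M,\alpha_M)$ and on morphisms, the task reduces purely to verifying that the module/comodule structure produced by $F$ and the $H^*\bowtie H$-action produced by $G$ match on the nose when composed in either order. So the plan is simply to check $G\circ F=\mathrm{id}$ on $\yd$ and $F\circ G=\mathrm{id}$ on $\;_{H^*\bowtie H}{\cal M}$.

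For $G\circ F=\mathrm{id}$, I start from $(M,\alpha_M)\in \yd$ and unwind the definitions. The $H$-action recovered by $G$ is $h\cdot m=(\varepsilon_H\bowtie h)\cdot m$; evaluating through the formula defining $F$ gives $\le \varepsilon_H,(\alpha_H^{-1}(h)\cdot \alpha_M^{-1}(m))_{(1)}\ri(\alpha_H^{-1}(h)\cdot \alpha_M^{-1}(m))_{(0)}$, which collapses via counitality of the $H$-comodule structure and the multiplicativity $\alpha_M(h'\cdot m')=\alpha_H(h')\cdot\alpha_M(m')$ to the original action $h\cdot m$. For the coaction, the recovered value is $(e^i\bowtie 1_H)\cdot m\ot e_i$; using $\alpha_H(1_H)=1_H$ together with the unit axiom $1_H\cdot n=\alpha_M(n)$ and the counital right-comodule axiom, this simplifies to $\le e^i,m_{(1)}\ri m_{(0)}\ot e_i=m_{(0)}\ot m_{(1)}$, i.e.\ the original coaction. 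Both computations are short and only use $\alpha_H(1_H)=1_H$, unitality of the action, and counitality of the coaction.

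For $F\circ G=\mathrm{id}$, I start from $(M,\alpha_M)\in \;_{H^*\bowtie H}{\cal M}$. By Corollary~\ref{co:moddiag} the $H^*\bowtie H$-action is completely determined via formula (\ref{eq:defdiagprod}) by its restrictions to $\varepsilon_H\bowtie H$ and $H^*\bowtie 1_H$. Since $G$ extracts precisely these two restrictions (the $H$-module structure and, via the dual-basis recipe, the $H^*$-module structure equivalent to the right $H$-comodule structure), and $F$ then reassembles them through (\ref{eq:defdiagprod}) on $H^*\bowtie H$, it suffices to check that $F$ recovers $(\varepsilon_H\bowtie h)\cdot m$ and $(f\bowtie 1_H)\cdot m$ correctly from the $G$-induced structures. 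Again this is a short unwinding using the unit axioms and the equivalence between unital left $H^*$-modules and counital right $H$-comodules in the finite-dimensional case.

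The main obstacle, if any, is purely bookkeeping: one must be careful that the twists by $\alpha_H^{-1}$ and $\alpha_M^{-1}$ in the formula defining $F$ cancel correctly against $\alpha_H$ on $\varepsilon_H\bowtie h$ and against $\alpha_M$ on the reassembled action, and that the dual-basis prescription in $G$ is compatible with the structure map $\beta$ on $H^*$ (using that $\{\alpha_H(e_i)\}$, $\{\beta(e^i)\}$ is again a pair of dual bases, as already exploited in the previous proposition). No new Yetter-Drinfeld identity is needed beyond those that went into constructing $F$ and $G$; the result is essentially formal once these two routine verifications are done.
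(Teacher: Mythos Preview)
Your proposal is correct and takes essentially the same approach as the paper: the paper's proof consists of the single remark that ``it is obvious that the functors $F$ and $G$ are inverse to each other,'' and you have simply spelled out this verification in more detail. One small note: you write $\yd$ (which in this paper denotes $_H^H{\cal YD}$), but the relevant category here is $_H{\cal YD}^H$; and the equation label is \texttt{defdiagprod}, not \texttt{eq:defdiagprod}, so your cross-references would not compile as written.
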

\section{The Drinfeld double}
\setcounter{equation}{0}
${\;\;\;}$
We recall first a variation of a result in \cite{mp1}:
\begin{theorem} (\cite{mp1})
Let $(H, \mu _H, \Delta _H, \alpha _H, 1_H, \varepsilon _H, R)$ be a 
unital and counital quasitriangular Hom-bialgebra such that  
$\alpha _H$ is bijective and 
$(\alpha _H\ot \alpha _H)(R)=R$. Then $_H{\mathcal M}$ is a prebraided monoidal category, 
with tensor product defined as 
in Proposition \ref{tensprodmod}, unit $(k, id_k)$ with action $h\cdot \lambda =\varepsilon _H(h)\lambda $ 
for all $h\in H$, $\lambda \in k$, 
associativity constraints defined by the same formula as the ones of the category $_H{\mathcal YD}^H$, i.e. 
$a_{M, N, P}=\alpha _M^{-1}\ot id_N\ot \alpha _P$, for $M, N, P\in \;
_H{\mathcal M}$, and prebraiding defined by $c_{M, N}:M\ot N\rightarrow N\ot M$, 
$c_{M, N}(m\ot n)=\alpha _N^{-1}(R^2\cdot n)\ot \alpha _M^{-1}(R^1\cdot m)$, for all 
$M, N\in \;_H{\mathcal M}$.
\end{theorem}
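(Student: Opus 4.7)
The plan is to verify each piece of the prebraided monoidal structure in turn, mirroring the proof that was presumably given in \cite{mp1} for the Yetter-Drinfeld setting, and to show that the only extra input needed beyond what is already available for the monoidal category $_H{\mathcal M}$ is the three quasitriangular axioms (\ref{homQT1})--(\ref{homQT3}) together with $(\alpha_H\ot \alpha_H)(R)=R$.

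First I would establish that $_H{\mathcal M}$ is monoidal. That $(M\ot N, \alpha_M\ot \alpha_N)$ is a left $H$-module via $h\cdot(m\ot n)=h_1\cdot m\ot h_2\cdot n$ is Proposition \ref{tensprodmod}. Unitality is immediate from $\Delta_H(1_H)=1_H\ot 1_H$. The associativity constraint $a_{M,N,P}((m\ot n)\ot p)=\alpha_M^{-1}(m)\ot(n\ot \alpha_P(p))$ is $H$-linear by Hom-coassociativity (\ref{hombia1}), rewritten using bijectivity of $\alpha_H$ as $\alpha_H^{-1}(h_1)\ot h_{21}\ot \alpha_H(h_{22})=h_{11}\ot h_{12}\ot \alpha_H(h_2)$, which is precisely what is needed to match the $H$-actions on both sides. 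Naturality of $a$ in each argument follows since every morphism intertwines the $\alpha$'s. The pentagon axiom reduces, after cancelling the common $h$-factors, to an identity on the $\alpha$'s of the form $\alpha_M^{-2}\ot id\ot id\ot \alpha_P^2$ on either composite, which holds trivially. The unit constraints $l_M(\lambda\ot m)=\lambda\alpha_M^{-1}(m)$ and $r_M(m\ot \lambda)=\lambda\alpha_M^{-1}(m)$ are $H$-linear because $h\cdot\lambda=\varepsilon_H(h)\lambda$ and $\varepsilon_H(h_1)h_2=h_2\varepsilon_H(h_1)=\alpha_H(h)$; the triangle axiom is direct.

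Next I would check that $c_{M,N}$ is a morphism in $_H{\mathcal M}$. For $h\in H$, $m\in M$, $n\in N$, one computes
\begin{eqnarray*}
c_{M,N}(h\cdot(m\ot n))&=&\alpha_N^{-1}(R^2\cdot(h_2\cdot n))\ot \alpha_M^{-1}(R^1\cdot(h_1\cdot m))\\
&=&\alpha_N^{-1}((R^2h_2)\cdot \alpha_N^{-1}(n))\ot \alpha_M^{-1}((R^1h_1)\cdot \alpha_M^{-1}(m)),
\end{eqnarray*}
while $h\cdot c_{M,N}(m\ot n)$ rewrites symmetrically in terms of $h_1R^2$ and $h_2R^1$; the equality is then exactly the QT axiom (\ref{homQT3}) $\Delta^{cop}(h)R=R\Delta(h)$ together with $(\alpha_H\ot \alpha_H)(R)=R$ (used to absorb the $\alpha_H^{-1}$'s coming from (\ref{hommod2})). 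Naturality in $M$ and $N$ is immediate.

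Finally, the hexagons. For one of them, one has to compare $a_{N,P,M}\circ c_{M,N\ot P}\circ a_{M,N,P}^{-1}$ to $(id_N\ot c_{M,P})\circ a_{N,M,P}\circ(c_{M,N}\ot id_P)$ on a generic $(m\ot n)\ot p$. Both sides reduce, after repeated application of (\ref{hommod1}), (\ref{hommod2}), and Hom-coassociativity, to an expression of the form $\alpha_N^{-?}(X^2\cdot n)\ot \alpha_P^{-?}(Y^2\cdot p)\ot \alpha_M^{-?}((X^1Y^1)\cdot m)$ with appropriate $\alpha$-shifts, and the required identity is precisely axiom (\ref{homQT1}), $(\Delta\ot \alpha)(R)=\alpha(R^1)\ot \alpha(r^1)\ot R^2r^2$. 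The second hexagon is the mirror, using (\ref{homQT2}). The main obstacle is the bookkeeping of the $\alpha_H^{\pm}$ and $\alpha_M^{\pm}$ factors in the hexagon check, since the Hom-associativity and the $\alpha$-twisted associator conspire to leave exactly the shifts required by (\ref{homQT1})--(\ref{homQT2}); the condition $(\alpha_H\ot \alpha_H)(R)=R$ is what makes this bookkeeping close up cleanly. Everything else is essentially formal.
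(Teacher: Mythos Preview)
The paper does not actually prove this theorem: it is explicitly recalled as ``a variation of a result in \cite{mp1}'' and stated without proof, so there is no argument in the present paper to compare your proposal against. Your sketch is a reasonable outline of how such a proof goes---establishing the monoidal structure of $_H{\mathcal M}$ via Hom-coassociativity for the associator, then using (\ref{homQT3}) for $H$-linearity of $c_{M,N}$ and (\ref{homQT1})--(\ref{homQT2}) for the two hexagons, with the invariance $(\alpha_H\ot\alpha_H)(R)=R$ absorbing the stray $\alpha$-shifts---and this is indeed the strategy one expects from \cite{mp1}.

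One small caution: your hexagon sketch is quite schematic (``$\alpha_N^{-?}$'', ``appropriate $\alpha$-shifts''), and in Hom-settings this bookkeeping is exactly where errors creep in; if you were to write this out in full you would want to track those exponents explicitly rather than assert that they ``close up cleanly''. But as a proof plan there is no genuine gap.
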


Let now $(H, \mu _H, \Delta _H, \alpha _H, 1_H, \varepsilon _H, S)$ be a finite dimensional 
Hom-Hopf algebra with bijective antipode and bijective $\alpha _H$. We will construct the Drinfeld double 
$D(H)$ of $H$, which will be a quasitriangular Hom-Hopf algebra. 

As a Hom-associative algebra, $D(H)$ is the Hom-diagonal crossed product $H^*\bowtie H$. So, 
its unit is $\varepsilon _H\bowtie 1_H$, its structure map is $\beta \ot \alpha _H$ and its 
multiplication is defined by
\begin{eqnarray*}
&&(f\bowtie h)(f'\bowtie h')=f\bullet [(\alpha _H^{-3}(h_1)\rightharpoonup \alpha _H^{*\;2}(f'))
\leftharpoonup 
\alpha _H^{-3}(S^{-1}(h_{22}))]
\bowtie \alpha _H^{-2}(h_{21})h',
\end{eqnarray*}
for all $f, f'\in H^*$ and $h, h'\in H$, where $\beta =\alpha _H^{*\;-1}$ and 
\begin{eqnarray*}
&&(f\bullet g)(h)=f(\alpha _H^{-2}(h_1))g(\alpha _H^{-2}(h_2)),\\
&&\rightharpoonup :H\ot H^*\rightarrow H^*, \;\;\;
(h\rightharpoonup f)(h')=f(\alpha _H^{-2}(h')h), \\
&&\leftharpoonup :H^*\ot H\rightarrow H^*, \;\;\;(f\leftharpoonup h)(h')=
f(h\alpha _H^{-2}(h')).
\end{eqnarray*}

By Theorem \ref{YDdiag}, the category $_{D(H)}{\mathcal M}$ is isomorphic to
$_H{\mathcal YD}^H$, which is a braided monoidal category. We transfer the structure from 
$_H{\mathcal YD}^H$ to $_{D(H)}{\mathcal M}$ and then to $D(H)$. We obtain thus the 
following result:
\begin{theorem}
$D(H)$ is a quasitriangular Hom-Hopf algebra and we have an isomorphism of braided monoidal 
categories $_{D(H)}{\mathcal M}\simeq  \;_H{\mathcal YD}^H$. The structure of $D(H)$ is the following. 

Its counit is $\varepsilon (f\bowtie h)=f(1_H)\varepsilon _H(h)$, for all $f\in H^*$, $h\in H$. 

Its comultiplication is defined by 
\begin{eqnarray*}
&&\Delta :D(H)\rightarrow D(H)\ot D(H), \;\;\;\Delta (f\bowtie h)=(f_2\circ \alpha _H^{-2}\bowtie h_1)
\ot (f_1\circ \alpha _H^{-2}\bowtie h_2),
\end{eqnarray*}
where we denoted $\mu _H^*:H^*\rightarrow H^*\ot H^*$, the dual of $\mu _H$, defined by 
$\mu _H^*(f)=f_1\ot f_2$ if and only if $f(hh')=f_1(h)f_2(h')$, for all $h, h'\in H$. 

The quasitriangular structure is the element 
\begin{eqnarray*}
&&R=\sum _i(\varepsilon _H\bowtie \alpha _H^{-1}(e_i))\ot (e^i\bowtie 1_H)\in D(H)\ot D(H), 
\end{eqnarray*}
where $\{e_i\}$, $\{e^i\}$ is a pair of dual bases in $H$ and $H^*$. It satisfies the extra condition 
$((\beta \ot \alpha _H)\ot (\beta \ot \alpha _H))(R)=R$. 

The antipode of $D(H)$ is given by the formula 
\begin{eqnarray*}
&&S_{D(H)}(f\bowtie h)=(\varepsilon _H\bowtie S(\alpha _H^{-1}(h)))(f\circ \alpha _H\circ S^{-1}\bowtie 1_H), 
\;\;\;\forall \;f\in H^*, \;h\in H. 
\end{eqnarray*}
\end{theorem}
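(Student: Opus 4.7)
The plan is to bootstrap the entire bialgebra/Hopf/quasitriangular structure of $D(H)$ from the braided monoidal category $_H{\mathcal YD}^H$ via the category isomorphism of \thref{YDdiag}, and then verify the resulting explicit formulas directly. The algebra and unit structures are already in place from the Hom-diagonal crossed product construction, so what remains is to produce $\Delta$, $\varepsilon$, $R$ and $S_{D(H)}$ and to check the axioms.

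First I would obtain $\varepsilon$ and $\Delta$ from the tensor structure on $_H{\mathcal YD}^H$. The object $(k,id_k)\in \;_H{\mathcal YD}^H$ has $H$-action $h\cdot\lambda=\varepsilon_H(h)\lambda$ and trivial coaction, so transferring through $F$ gives the $D(H)$-action $(f\bowtie h)\cdot\lambda=f(1_H)\varepsilon_H(h)\lambda$; this reads off the counit. For the comultiplication, given $M,N\in \;_{D(H)}{\mathcal M}$ I would compute the $D(H)$-action on $M\hot N$ (with the tensor Yetter-Drinfeld structure from the earlier proposition) using \eqref{defdiagprod}: $(f\bowtie h)\cdot (m\ot n)$ unfolds into an expression of the form $(f_2\circ\alpha_H^{-2}\bowtie h_1)\cdot m\;\ot\;(f_1\circ\alpha_H^{-2}\bowtie h_2)\cdot n$, and this forces the given formula for $\Delta$. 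With $\Delta$ in hand, I would verify Hom-coassociativity, multiplicativity $\Delta(xy)=\Delta(x)\Delta(y)$ and compatibility $(\beta\ot\alpha_H)\circ\Delta=\Delta\circ(\beta\ot\alpha_H)$ directly: coassociativity of $\Delta_H$ combined with coassociativity of $\mu_H^*$ on $H^*$ takes care of the coalgebra axioms, while multiplicativity reduces to the standard compatibility identities among $\rightharpoonup$, $\leftharpoonup$, $\bullet$ and $\Delta_H$, using the Hom-bialgebra identities and the fact that $H$ is a Hom-Hopf algebra.

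Next I would produce the $R$-matrix by reading off the braiding $c_{M,N}(m\ot n)=\alpha_N^{-1}(n_{(0)})\ot\alpha_M^{-1}(\alpha_H^{-1}(n_{(1)})\cdot m)$. Writing the coaction in terms of dual bases as $n_{(0)}\ot n_{(1)}=(e^i\bowtie 1_H)\cdot n\ot e_i$ and comparing with the expected pattern $c_{M,N}(m\ot n)=\alpha_N^{-1}(R^2\cdot n)\ot\alpha_M^{-1}(R^1\cdot m)$ in $_{D(H)}{\mathcal M}$, one obtains $R=\sum_i(\varepsilon_H\bowtie\alpha_H^{-1}(e_i))\ot(e^i\bowtie 1_H)$. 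Then I would verify the three axioms \eqref{homQT1}--\eqref{homQT3} for this $R$ directly: the two coproduct relations amount to the identity $\sum_i e_i^1\ot e_i^2\ot e^i=\sum_{i,j}e_i\ot e_j\ot e^i\bullet e^j$ (the standard dual-basis calculation, shifted by appropriate powers of $\alpha_H$), and the quasi-cocommutativity $\Delta^{cop}(x)R=R\Delta(x)$ reduces, after testing on $x=\varepsilon_H\bowtie h$ and $x=f\bowtie 1_H$ separately, to the Yetter-Drinfeld compatibility \eqref{YDlr} expressed on $D(H)$ itself (this is exactly the content of the proof of the functor $F$ being well-defined, read backwards). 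The extra condition $((\beta\ot\alpha_H)\ot(\beta\ot\alpha_H))(R)=R$ follows from $\{\alpha_H(e_i)\}$, $\{\beta(e^i)\}$ being dual bases.

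Finally, for the antipode I would guess the formula $S_{D(H)}(f\bowtie h)=(\varepsilon_H\bowtie S(\alpha_H^{-1}(h)))(f\circ\alpha_H\circ S^{-1}\bowtie 1_H)$ as the product of the antipodes one would obtain if $D(H)$ were literally a (twisted) tensor product of the Hom-Hopf algebras $H^{*\;cop}$ and $H$, and verify $S_{D(H)}(x_1)x_2=x_1S_{D(H)}(x_2)=\varepsilon(x)(\varepsilon_H\bowtie 1_H)$ by expanding in the two natural cases $x=\varepsilon_H\bowtie h$ and $x=f\bowtie 1_H$ and then using multiplicativity of $\Delta$ to handle the general $x=f\bowtie h=(f\bowtie 1_H)(\varepsilon_H\bowtie h)$ (up to a factor of $\alpha$). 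The main obstacle I anticipate is exactly this last step: keeping track of all the powers of $\alpha_H$ and $S^{-1}$ that appear when pushing elements through $\rightharpoonup$, $\leftharpoonup$ and the diagonal crossed product multiplication, and confirming that the same formulas produce both a left and a right convolution inverse of $id_{D(H)}$. All the other verifications are formal consequences of the bialgebra compatibilities and dual-basis manipulations, but the antipode axioms require a careful Hom-associative rearrangement involving the identity \eqref{invant} together with \eqref{suplant}.
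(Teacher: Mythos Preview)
Your overall strategy---transfer the monoidal, braided, and unit structure of $_H{\mathcal YD}^H$ across the isomorphism of \thref{YDdiag} to read off $\varepsilon$, $\Delta$, $R$, and then verify the Hom-bialgebra and quasitriangular axioms directly---is exactly the approach the paper takes. In particular, your plan to check (\ref{homQT3}) separately on $\varepsilon_H\bowtie h$ and $f\bowtie 1_H$ matches the paper's hint verbatim.

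There is one genuine gap, in the antipode step. Your plan is to verify $x_1S_{D(H)}(x_2)=\varepsilon(x)1$ on the two kinds of generators and then ``use multiplicativity of $\Delta$ to handle the general $x=f\bowtie h$.'' But multiplicativity of $\Delta$ alone does not let you pass from generators to products: for $x=ab$ you get $a_1b_1\,S_{D(H)}(a_2b_2)$, and collapsing this requires $S_{D(H)}$ to be anti-multiplicative, which you have not yet established (and which, in the usual argument, is a \emph{consequence} of the antipode axioms rather than an input to them). The paper avoids this circularity by computing directly on a general element: it factors $f_2\circ\alpha_H^{-2}\bowtie h_1$ as $(f_2\circ\alpha_H^{-1}\bowtie 1_H)(\varepsilon_H\bowtie\alpha_H^{-1}(h_1))$, writes out $S_{D(H)}(f_1\circ\alpha_H^{-2}\bowtie h_2)$ from its definition, and then applies the Hom-associativity identity
\[
(ab)(cd)=\alpha_H(a)\bigl(\alpha_H^{-1}(bc)\,d\bigr)
\]
to bring the two middle factors $(\varepsilon_H\bowtie\alpha_H^{-1}(h_1))$ and $(\varepsilon_H\bowtie S(\alpha_H^{-1}(h_2)))$ together, where (\ref{ant}) kills them. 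This single rearrangement is the key maneuver you are missing; once you have it, the $H^*$-side collapses by (\ref{invant}). You should either adopt this direct computation or, if you insist on reducing to generators, first prove separately that $S_{D(H)}$ is an anti-morphism of Hom-associative algebras.
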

\begin{proof}
We leave most of the details to the reader. Let us note that in order to prove (\ref{homQT3}), one has 
to prove first that $\Delta ^{cop}(\varepsilon _H\bowtie h)R=R\Delta (\varepsilon _H\bowtie h)$ and 
$\Delta ^{cop}(f\bowtie 1_H)R=R\Delta (f\bowtie 1_H)$, for all $f\in H^*$, $h\in H$. Let us 
prove one of the two properties of the antipode, namely
\begin{eqnarray*}
&&(f\bowtie h)_1S_{D(H)}((f\bowtie h)_2)=f(1_H)\varepsilon _H(h)\varepsilon _H\bowtie 1_H. 
\end{eqnarray*}
Note that as a consequence of the Hom-associativity of $H$ we have 
\begin{eqnarray}
&&(ab)(cd)=\alpha _H(a)(\alpha _H^{-1}(bc)d),  \;\;\;\forall \; a, b, c,d\in H. \label{4elem}
\end{eqnarray}
Now we compute: \\[2mm]
${\;\;\;\;\;}$$(f\bowtie h)_1S_{D(H)}((f\bowtie h)_2)$
\begin{eqnarray*}
&=&(f_2\circ \alpha _H^{-2}\bowtie h_1)S_{D(H)}(f_1\circ \alpha _H^{-2}\bowtie h_2)\\
&=&[(f_2\circ \alpha _H^{-1}\bowtie 1_H)(\varepsilon _H\bowtie \alpha _H^{-1}(h_1))]
[(\varepsilon _H\bowtie S(\alpha _H^{-1}(h_2)))(f_1\circ \alpha _H^{-1}\circ S^{-1}\bowtie 1_H)]\\
&\overset{(\ref{4elem})}{=}&(f_2\circ \alpha _H^{-2}\bowtie 1_H)
[(\beta ^{-1}\ot \alpha _H^{-1})((\varepsilon _H\bowtie \alpha _H^{-1}(h_1))
(\varepsilon _H\bowtie S(\alpha _H^{-1}(h_2))))\\
&&\;\;\;\;\;(f_1\circ \alpha _H^{-1}\circ S^{-1}\bowtie 1_H)]\\
&=&(f_2\circ \alpha _H^{-2}\bowtie 1_H)
[(\beta ^{-1}\ot \alpha _H^{-1})(\varepsilon _H\bowtie \alpha _H^{-1}(h_1S(h_2)))
(f_1\circ \alpha _H^{-1}\circ S^{-1}\bowtie 1_H)]\\
&\overset{(\ref{ant})}{=}&\varepsilon _H(h)(f_2\circ \alpha _H^{-2}\bowtie 1_H)
(f_1\circ \alpha _H^{-2}\circ S^{-1}\bowtie 1_H)\\
&=&\varepsilon _H(h)((f_2\circ \alpha _H^{-2})\bullet 
(f_1\circ \alpha _H^{-2}\circ S^{-1})\bowtie 1_H)\\
&\overset{(\ref{invant})}{=}&f(1_H)\varepsilon _H(h)\varepsilon _H\bowtie 1_H,
\end{eqnarray*}
finishing the proof.
\end{proof}


\begin{thebibliography}{99}
\bibitem{Alvarez}
L. Alvarez-Gaum\'{e}, C. Gomez, G. Sierra, \emph{Quantum group interpretation of some conformal 
field theories},  Phys. Lett. B \textbf{220}(1-2), 142--152 (1989).

\bibitem{AizawaSaito}
N. Aizawa, H. Sato, \emph{$q$-deformation of the Virasoro algebra with
central extension}, Phys. Lett. B \textbf{256}(1), 185--190 (1991). N. Aizawa, H. Sato, Hiroshima University
preprint HUPD-9012 (1990).

\bibitem{AEM}
F. Ammar, Z. Ejbehi, A. Makhlouf, \emph{Cohomology and deformations of Hom-algebras}, 
J. Lie Theory \textbf{21}(4), 813--836 (2011).

\bibitem{b1}
P. Bieliavsky, P. Bonneau, Y. Maeda, \emph{Universal deformation formulae, 
symplectic Lie groups and symmetric spaces}, 
Pacific J. Math.  \textbf{230}, 41--57 (2007). 

\bibitem{b2}
P. Bieliavsky, P. Bonneau, Y. Maeda, \emph{Universal deformation formulae for 
three-dimensional solvable Lie groups}, 
in ``Quantum field theory and noncommutative geometry'', 127--141, 
Lecture Notes in Phys., {\bf 662}, Springer, Berlin, 2005.

\bibitem{b3}
P. Bonneau, M. Gerstenhaber, A. Giaquinto, D. Sternheimer, \emph{Quantum groups and 
deformation quantization: Explicit approaches and implicit aspects},  
J. Math. Phys. \textbf{45}, 3703--3741 (2004).

\bibitem{b4}
P. Bonneau, D. Sternheimer, \emph{Topological Hopf algebras, quantum groups and 
deformation quantization}, in ''Hopf algebras in noncommutative geometry 
and physics'', 55--70, Lecture Notes in Pure and Appl. Math. {\bf 239},  
Marcel Dekker, New York, 2005.

\bibitem{bpvo}
D. Bulacu, F. Panaite, F. Van Oystaeyen, \emph{Generalized diagonal crossed  
products and smash products for quasi-Hopf algebras. Applications}, 
Comm. Math. Phys. \textbf{266}, 355--399 (2006).

\bibitem{stef}
S. Caenepeel, I. Goyvaerts, \emph{Monoidal Hom-Hopf algebras}, Comm. Algebra \textbf{39}, 
2216--2240 (2011).

\bibitem{Cap}
A. Cap, H. Schichl, J. Vanzura, \emph{On twisted tensor products of algebras}, 
Comm. Algebra {\bf 23}, 4701--4735 (1995).

\bibitem{ChaiElinPop}
M. Chaichian, D.  Ellinas, 
Z. Popowicz, \emph{Quantum conformal algebra with central extension}, Phys.
Lett. B \textbf{248}(1-2), 95--99 (1990).

\bibitem{ChaiKuLukPopPresn}
M.  Chaichian, 
A. P.  Isaev, J. Lukierski, Z. Popowi, P. Prevnajder, 
\emph{$q$-deformations of Virasoro algebra and conformal dimensions}, Phys.
Lett. B \textbf{262}(1), 32--38 (1991).

\bibitem{ChaiIsKuLuk}  
M. Chaichian, P.  Kulish,
J. Lukierski, \emph{$q$-deformed Jacobi identity, $q$-oscillators and
$q$-deformed infinite-dimensional algebras}, Phys. Lett. B
\textbf{237}(3-4), 401--406 (1990).

\bibitem{ChaiPopPres}
M. Chaichian,
Z. Popowicz, P. Prevnajder,  \emph{$q$-Virasoro algebra and its
relation to the $q$-deformed KdV system}, Phys. Lett. B \textbf{249}(1),
63--65 (1990).

\bibitem{CZ} 
Y. Chen,  L. Zhang, \emph{The category of Yetter-Drinfel'd Hom-modules and the quantum Hom-Yang-Baxter 
equation, }
J. Math. Phys. \textbf{55}, 031702 (2014).

\bibitem{CWZ} Y. Chen, Z. Wang,  L. Zhang, 
\emph{ Integrals for monoidal Hom-Hopf algebras and their applications,} J. Math.
Phys. {\bf 54}, 073515 (2013).

\bibitem{cp}
M. Ciungu, F. Panaite, \emph{L-R-smash products and L-R-twisted tensor products of algebras},  
Algebra Colloq. \textbf{21}(1), 129--146 (2014). 

\bibitem{CurtrZachos1}
T. L. Curtright, C. K. Zachos,
\emph{Deforming maps for quantum algebras,} Phys. Lett. B \textbf{243}(3), 
237--244  (1990).

\bibitem{DamKu}
E. V. Damaskinsky, P. P.   Kulish,
\emph{Deformed oscillators and their applications} (Russian),  Zap.\
Nauch.\ Semin.\ LOMI \textbf{189}, 37--74 (1991); Engl. transl. in
J. Soviet Math.  \textbf{62}(5), 2963--2986 (1992).

\bibitem{DaskaloyannisGendefVir}
C.  Daskaloyannis,  \emph{Generalized deformed Virasoro algebras}, Mod. 
Phys. Lett. A \textbf{7}(9), 809--816 (1992).

\bibitem{Elhamdadi-Makhlouf}
M. Elhamdadi, A. Makhlouf, \emph{Hom-quasi-bialgebras}, Contemp. Math. \textbf{ 585} 
(Eds. N. Andruskiewitch, 
J. Cuadra and B. Torrecillas), Amer. Math. Soc., Providence, RI  (2013). 

\bibitem{fgs} 
Y. Fregier, A. Gohr, S. D. Silvestrov, \emph{Unital algebras of Hom-associative type and surjective 
or injective twistings}, J. Gen. Lie Theory Appl. \textbf{3}(4), 285--295 (2009).

\bibitem{Gohr}
A. Gohr, \emph{On Hom-algebras with surjective twisting}, 
J. Algebra \textbf{324}, 1483--1491 (2010).

\bibitem{HLS} 
J. T. Hartwig, D.  Larsson, S. D. Silvestrov,
\emph{Deformations of Lie algebras using $\sigma$-derivations}, J.
Algebra \textbf{295},  314--361 (2006).

\bibitem{hn1}
F. Hausser, F. Nill, \emph{Diagonal crossed products by duals of
quasi-quantum groups}, Rev. Math. Phys. \textbf{11},
553--629 (1999).

\bibitem{Hu} 
N. Hu, \emph{$q$-Witt algebras,
$q$-Lie algebras, $q$-holomorph structure and representations},
Algebra Colloq. \textbf{6}(1), 51--70 (1999).

\bibitem{Kassel1} 
C. Kassel, \emph{Cyclic homology of differential operators,
the Virasoro algebra and a $q$-analogue}, Commun. Math. Phys. \textbf{146}, 
343--351 (1992).

\bibitem{jlpvo}
P. Jara Mart\'{i}nez, J. L\'{o}pez Pe\~{n}a, F. Panaite, F. Van Oystaeyen,
\emph{On iterated twisted tensor products of algebras},  
Internat. J. Math. {\bf 19}, 1053--1101 (2008).

\bibitem{LS1}
D.  Larsson, S. D. Silvestrov,
\emph{Quasi-Hom-Lie algebras, central extensions and 2-cocycle-like
identities}, J. Algebra \textbf{288}, 321--344 (2005).

\bibitem{LiuKeQin} 
K. Q. Liu, 
\emph{Characterizations of the quantum Witt algebra}, Lett. Math. Phys.
\textbf{24}(4), 257--265 (1992).

\bibitem{LB}
L. Liu, B. Shen, \emph{ Radford's biproducts and Yetter-Drinfeld modules for monoidal Hom-Hopf algebras,}  
J. Math. Phys. \textbf{55}, 031701 (2014).

\bibitem{lpvo}
J. L\'{o}pez Pe\~{n}a, F. Panaite, F. Van Oystaeyen, \emph{General twisting of algebras}, 
Adv. Math. {\bf 212}, 315--337 (2007)

\bibitem{Mak:Almeria}
A. Makhlouf, \emph{Paradigm of nonassociative Hom-algebras and Hom-superalgebras}, 
Proceedings of the ''Jordan Structures in Algebra and Analysis'' Meeting (Eds. 
J. Carmona Tapia, A. Morales Campoy, A. M. Peralta Pereira, M. I. Ramirez Ilvarez), 
Publishing House: Circulo Rojo, 145--177 (2010).

\bibitem{ms1}
A. Makhlouf, S. Silvestrov, \emph{Hom-algebra structures}, J. Gen. Lie Theory Appl. 
\textbf{2}(2), 51--64 (2008).

\bibitem{ms2}
A. Makhlouf, S. Silvestrov, \emph{Notes on formal deformations of Hom-associative and Hom-Lie algebras}, 
Forum Math. \textbf{22}(4), 715--759 (2010).

\bibitem{ms3}
A. Makhlouf, S. Silvestrov, \emph{Hom-Lie admissible Hom-coalgebras and Hom-Hopf algebras}, 
Published  as Chapter 17, pp.
189--206, in ''Generalized Lie theory in Mathematics, Physics and Beyond'' 
(Eds. S. Silvestrov, E. Paal, V. Abramov, A. Stolin), 
Springer-Verlag, Berlin (2008).

\bibitem{ms4}
A. Makhlouf, S. Silvestrov, \emph{Hom-algebras and Hom-coalgebras}, J. Algebra Appl. \textbf{9}(4), 
553--589 (2010). 

\bibitem{mp1}
A. Makhlouf, F. Panaite, \emph{Yetter-Drinfeld modules for Hom-bialgebras}, 
J. Math. Phys.  \textbf{55}, 013501  (2014).

\bibitem{mp2}
A. Makhlouf, F. Panaite, \emph{Twisting operators, twisted tensor products and smash products for 
Hom-associative algebras}, arXiv:math.QA/1402.1893. 

\bibitem{panvan}
F. Panaite, F. Van Oystaeyen, \emph{L-R-smash product for (quasi-)Hopf algebras}, J. Algebra 
{\bf 309}, 168--191 (2007).

\bibitem{schafer}
R. D. Schafer, ``An introduction to nonassociative algebras'', Academic 
Press, New York and London, 1966. 

\bibitem{VanDaele}
A. Van Daele, S. Van Keer, \emph{The Yang--Baxter and Pentagon equation}, 
Compositio Math. {\bf 91}, 201--221 (1994). 

\bibitem{yau1}
D. Yau, \emph{Module Hom-algebras}, e-Print 	arXiv:0812.4695 (2008).

\bibitem{yau2}
D. Yau, \emph{Hom-bialgebras and comodule Hom-algebras}, Int. E. J. Algebra. \textbf{8}, 45--64 (2010).

\bibitem{yau3}
D. Yau, \emph{Hom-algebras and homology}, J. Lie Theory \textbf{19}, 409--421 (2009).

\bibitem{homquantum1}
D. Yau, \emph{Hom-quantum groups I: Quasitriangular Hom-bialgebras},  J. Phys. A \textbf{45}(6), 
065203, 23 pp (2012).

\bibitem{homquantum2}
D. Yau, \emph{Hom-quantum groups II: Cobraided Hom-bialgebras and Hom-quantum geometry}, 
e-Print arXiv:0907.1880 (2009)

\bibitem{homquantum3}
D. Yau, \emph{Hom-quantum groups III: Representations and module Hom-algebras}, e-Print 
arXiv:0911.5402 (2009).

\bibitem{yauhomyb1}
D. Yau, \emph{Hom-Yang-Baxter equation, Hom-Lie algebras and quasitriangular bialgebras},  
J. Phys. A \textbf{42}(16), 165202, 12 pp  (2009).



\end{thebibliography}
\end{document}